\documentclass[10pt]{amsart}
\usepackage{amssymb}
\usepackage{amsmath}
\usepackage{amsfonts}

\newtheorem{theorem}{Theorem}[section]
\newtheorem{definition}[theorem]{Def\/inition}
\newtheorem{proposition}[theorem]{Proposition}
\newtheorem{corollary}[theorem]{Corollary}
\newtheorem{lemma}[theorem]{Lemma}
\newtheorem{rmk}[theorem]{Remark}
\newenvironment{remark}{\begin{rmk}\rm}{\end{rmk}}
\newtheorem{conjecture}[theorem]{Conjecture}

\newcommand{\iso}{\smash{\mathop{\longrightarrow}\limits^{\sim}}}

\newcommand{\ol}{\overline}

\newcommand{\A}{\mathbf{A}}
\newcommand{\C}{\mathbf{C}}
\newcommand{\F}{\mathbf{F}}
\newcommand{\Fl}{\F_\ell}
\newcommand{\Flbar}{\overline{\F}_\ell}
\newcommand{\Kbar}{\overline{K}}
\newcommand{\N}{\mathbf{N}}
\newcommand{\Q}{\mathbf{Q}}
\newcommand{\Qbar}{\overline{\Q}}
\newcommand{\Qlbar}{\overline{\Q}_\ell}
\newcommand{\R}{\mathbf{R}}
\newcommand{\Z}{\mathbf{Z}}
\renewcommand{\P}{\mathbf{P}}
\newcommand{\Gm}{\mathbf{G}_m}

\DeclareMathOperator{\Res}{Res}
\DeclareMathOperator{\End}{End}
\DeclareMathOperator{\Frob}{Frob}
\DeclareMathOperator{\GL}{GL}
\DeclareMathOperator{\Gal}{Gal}
\DeclareMathOperator{\Hom}{Hom}
\DeclareMathOperator{\M}{M}
\DeclareMathOperator{\new}{new}
\DeclareMathOperator{\Pic}{Pic}
\DeclareMathOperator{\sgn}{sgn}
\DeclareMathOperator{\SL}{SL}
\DeclareMathOperator{\Spec}{Spec}
\DeclareMathOperator{\Symm}{Symm}
\DeclareMathOperator{\T}{\mathbf{T}}

\def\smallmat#1#2#3#4{\bigl(\begin{smallmatrix}{#1}&{#2}\\{#3}&{#4}\end{smallmatrix}\bigr)}

\newcommand{\Ind}{\mathrm{Ind}}
\newcommand{\gr}{\mathrm{gr}}
\newcommand{\fil}{\mathrm{Fil}}
\newcommand{\ur}{\mathrm{ur}}
\newcommand{\tor}{\mathrm{tor}}
\newcommand{\crys}{\mathrm{crys}}
\newcommand{\et}{\mathrm{\acute{e}t}}
\newcommand{\ab}{\mathrm{ab}}

\newcommand{\Ext}{\mathrm{Ext}}
\newcommand{\val}{\mathrm{val}}
\newcommand{\tr}{\mathrm{tr}}
\newcommand{\spec}{\Spec}
\newcommand{\red}{\mathrm{red}}

\newcommand{\CO}{\mathcal{O}}
\newcommand{\CF}{\mathcal{F}}

\newcommand{\gn}{\mathfrak{n}}
\newcommand{\gm}{\mathfrak{m}}
\newcommand{\gp}{\mathfrak{p}}
\newcommand{\gq}{\mathfrak{q}}
\newcommand{\frob}{\mathsf{Frob}}
\newcommand{\uhp}{\mathfrak{H}}

\begin{document}

\title[On Serre's conjecture over totally real fields]{On Serre's conjecture for mod $\ell$ Galois representations over totally real fields}

\author{Kevin Buzzard}
\address{Department of Mathematics, Imperial College, 180 Queen's Gate,
London SW7 2AZ, UK}
\email{k.buzzard@imperial.ac.uk}
\author{\sc Fred Diamond}
\address{Department of Mathematics,
King's College London, Strand,
London WC2R 2LS, UK}
\email{fred.diamond@kcl.ac.uk}
\author{\sc Frazer Jarvis}
\address{Department of Pure Mathematics, University of Sheffield,
Sheffield S3 7RH, UK}
\email{A.F.Jarvis@sheffield.ac.uk}

\date{April 2010}
\maketitle
\begin{abstract}
In 1987 Serre conjectured that any mod $\ell$ two-dimensional irreducible odd representation of the absolute Galois group of the rationals came from a modular form in a precise way. We present a generalisation of this conjecture to 2-dimensional representations of the absolute Galois group of a totally real field where $\ell$ is unramified. The hard work is in formulating an analogue of the ``weight'' part of Serre's conjecture. Serre furthermore asked whether his conjecture could be rephrased in terms of a ``mod $\ell$ Langlands philosophy''. Using ideas of Emerton and Vign\'eras, we formulate a mod $\ell$ local-global principle for the group $D^*$, where $D$ is a quaternion algebra over a totally real field, split above $\ell$ and at 0 or 1 infinite places, and show how it implies the conjecture.
\end{abstract}

\section{Introduction}
Serre conjectured in \cite{serre:duke} that if $\ell$ is prime and 
$$\rho: G_\Q \to \GL_2(\ol{\F}_\ell)$$
is a continuous, odd, irreducible representation, then $\rho$
is {\em modular} in the sense that it arises as the reduction of an $\ell$-adic
representation associated to a Hecke eigenform in the space $S_k(\Gamma_1(N))$
of cusp forms of some weight $k$ and level $N$. Let us refer to this
(incredibly strong) conjecture as ``the weak conjecture''.
Serre goes on to formulate a refined conjecture which predicts the minimal
weight and level of such an eigenform subject to the constraints $k \ge 2$
and $\ell \nmid N$; let us call this ``Serre's refined conjecture''.
Note that Serre explicitly excludes weight~1 modular
forms, although a further reformulation was made by Edixhoven
in~\cite{edixhoven} to include them, and we refer to Edixhoven's
reformulation as ``Edixhoven's refined conjecture''.
Through the work of Ribet~\cite{ribet:inv}, Gross~\cite{gross},
Coleman-Voloch~\cite{cv} and others, the equivalence
between the weak conjecture and Serre's refinement was known for $\ell>2$
(see~\cite{fred:hk}), and also when $\ell=2$ in many cases 
(see~\cite{buzzard:2}). The equivalence of Serre's refined conjecture
and Edixhoven's refined conjecture is also essentially known,
although the question does not appear to have been completely
resolved: for $\ell = 2$, there still appears to be an issue regarding constructing
a weight~1 form in every case that Edixhoven predicts that such a form exists.

The aim of this paper is to formulate a generalisation of Serre's
refined conjecture to the context of two-dimensional representations
of $G_K$ where $K$ is a totally real field. The details of such
a formulation (assuming $\ell$ unramified in $K$) were worked out by one
of us (F.D.) stemming from correspondence
and conversations among the authors in 2002, and the first version
of this paper appeared in 2004. At that time the weak conjecture (for $G_\Q$)
appeared out of reach. Since then there has been startling progress,
culminating in its recent proof by Khare and Wintenberger~\cite{kw:serre1,
kw:serre2}, building on ideas developed by Dieulefait and 
themselves~\cite{dieulefait, wintenberger, kw:serre0, khare:serre} and relying
crucially on potential modularity and modularity lifting methods and results of Taylor,
Wiles and Kisin~\cite{wiles:flt, tw:flt, rlt:pm, kisin:annals, kisin:inv}.
Their result also resolves the remaining case for $\ell=2$ of Serre's refined
conjecture.

Since the first version of this paper appeared, there has also been 
significant progress towards proving the equivalence between the
``weak'' and ``refined'' conjectures we presented over $G_K$. 
Partial results already followed from work of one of the authors~\cite{frazer:mazur, frazer:ll},
Fujiwara~\cite{fujiwara} and Rajaei~\cite{rajaei}, and further results were subsequently obtained
by Schein~\cite{schein:crelle} and Gee~\cite{gee:thesis}.
For the most part the techniques were generalisations of ones already used
in the case $K=\Q$ and seemed severely limited with respect to
establishing the ``weight part'' of the refined conjecture.
However in~\cite{gee:duke, gee:ppt} Gee presented a new, much more promising approach;
it remains to be seen how far the ideas there
can be pushed towards a complete proof of the equivalence between
weak and refined conjectures. Note also that these ideas have
been extended in~\cite{gee-savitt} to cover cases where $\ell$ is
ramified in~$K$.

Another important development related to Serre's conjecture
has been the recent progress on constructing $\ell$-adic
and mod $\ell$ Langlands correspondences, especially
the work of Breuil, Colmez and Emerton. In particular,
a correspondence between two-dimensional $\ell$-adic
representations of $G_{\Q_\ell}$ and certain $\ell$-adic
representations of $\GL_2(\Q_\ell)$ was constructed by Colmez~\cite{colmez},
and a conjectural compatibility with a global correspondence 
was formulated and proved in many cases by Emerton in~\cite{emerton:draft-a}
(see also~\cite{emerton:coates}). 
There is also a mod $\ell$ version of this compatibility, 
which we refer to as ``Emerton's refined conjecture''
(see also \cite{emerton:draft-b}). Most cases of Serre's refined conjecture
follow from Emerton's; in particular the specification of the weight
is essentially a description of the $\GL_2(\Z_\ell)$-socle of the 
representation of $\GL_2(\Q_\ell)$ arising as a local factor at $\ell$
associated to $\rho$.
The current version of this paper includes a partial generalisation
of Emerton's refined conjecture to certain forms of $\GL_2$ over $K$. 
The sense in which it is ``partial'' is that at primes over $\ell$ 
we only describe the Jordan-H\"older constituents of the
socle of a maximal compact for the local factor,
and even that only for primes unramified over $\ell$.
The relevance of this socle and the difficulty
of generalising the mod $\ell$ local correspondence to
extensions of $\Q_\ell$ is clearly illustrated by the recent work of
Breuil and Paskunas~\cite{breuil_paskunas}.

We now explain our set-up and aims in a little more detail. 
Suppose that $K$ is a totally real field. Let $\CO$ denote its ring
of integers and let $S_K$ be the set of embeddings of $K$ in $\R$.
Suppose that $\vec{k} \in \Z^{S_K}$ with $k_\tau \ge 1$ for all $\tau\in S_K$
and furthermore assume that all of the $k_\tau$ are of the same
parity. Let $\gn$ be a non-zero ideal of $\CO$. The space of Hilbert
modular cusp forms of weight $\vec{k}$ and level $\gn$, denoted
$S_{\vec{k}}(U_1(\gn))$, is a finite-dimensional complex vector space
equipped with an action of commuting Hecke operators $T_\gm$, indexed by the
non-zero ideals $\gm$ of $\CO$ (to fix ideas, let us normalise our spaces
and Hecke operators as in~\cite{rlt:inv}). Fix once and for all
embeddings $\ol{\Q}\to\C$ and
$\ol{\Q}\to\ol{\Q}_\ell$, and let $0\not=f\in S_{\vec{k}}(U_1(\gn))$
be an eigenform for all the $T_\gm$.
A construction of Rogawski-Tunnell, Ohta and Carayol~\cite{rt:inv, ohta, carayol:ens}, 
completed by Taylor and Jarvis~\cite{rlt:inv, jarvis:wt1},
associates to $f$ an $\ell$-adic representation
$$\rho_f: G_K \to \GL_2(\ol{\Q}_\ell)$$
such that if $\gp$ is a prime of $\CO$ not dividing
$\ell \gn$, then $\rho_f$
is unramified at $\gp$ and, if $\frob_\gp$ denotes a geometric Frobenius,
then $\mathrm{tr}\rho_f(\frob_\gp)$ is the eigenvalue
of $T_\gp$ on~$f$ (note that Taylor does not need to
specify whether his Frobenius elements are arithmetic or geometric, so we shall
assume that they are geometric).
Fixing an identification of the residue field of
$\ol{\Q}_\ell$ with $\ol{\F}_\ell$,
we obtain a representation
$$\ol{\rho}_f: G_K \to \GL_2(\ol{\F}_\ell)$$
defined as the semisimplification of the reduction of $\rho_f$.
It is natural to expect the following ``folklore'' generalisation of 
Serre's weak conjecture to hold:
\begin{conjecture}\label{conj:weak}
Suppose that $\rho:G_K \to \GL_2(\ol{\F}_\ell)$ is
continuous, irreducible and totally odd. Then $\rho$
is isomorphic to $\ol{\rho}_f$
for some Hilbert modular eigenform $f$.
\end{conjecture}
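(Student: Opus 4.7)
The plan is to adapt the Khare--Wintenberger strategy from $K=\Q$ to the totally real setting. In broad outline: use Taylor's potential modularity theorems to modularize $\rho$ over some totally real extension $K'/K$, use modularity lifting theorems to upgrade this to modularity of a suitable characteristic-zero lift $\tilde{\rho}$, and then descend from $K'$ back to $K$ by an induction that allows one to switch residue characteristics. Throughout, one expects to have to pay the usual cost: the Hilbert modular form one eventually produces need not have controlled weight or level, so even just establishing the weak Conjecture~\ref{conj:weak} is far weaker than the refined conjectures discussed earlier in the introduction.

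More concretely, I would first apply Taylor's potential modularity result to produce a totally real solvable extension $K'/K$, linearly disjoint from the field cut out by $\ol{\rho}$ (and avoiding the obstructions coming from the finite set of primes where $\rho$ is ramified, including those above $\ell$), such that $\rho|_{G_{K'}}$ arises from a Hilbert modular eigenform $f'$ over $K'$. Next, using a generalisation of the Ramakrishna/Khare--Wintenberger lifting techniques over totally real fields, I would construct a geometric characteristic-zero lift $\tilde{\rho}: G_K \to \GL_2(\Qlbar)$ of $\rho$ which is crystalline of small Hodge--Tate weights at primes dividing $\ell$, and of minimal prescribed ramification elsewhere. Applying a modularity lifting theorem over $K'$ to $\tilde{\rho}|_{G_{K'}}$ (which lifts $\rho|_{G_{K'}}$, already known to be modular) would show $\tilde{\rho}|_{G_{K'}}$ is modular, and solvable base change for Hilbert modular forms would then yield modularity of $\tilde{\rho}$ itself, whence of $\rho = \ol{\tilde{\rho}}$.

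The induction underlying the Khare--Wintenberger argument would proceed on some invariant measuring the complexity of $(\rho,\ell)$, for instance the (tame) Artin conductor together with the residue characteristic: to handle a given pair, one uses a companion prime $\ell'$ and a characteristic-zero lift whose reduction mod $\ell'$ has strictly smaller invariant, falls under the inductive hypothesis, and then one returns to $\ell$ via a compatible system produced by modularity at $\ell'$ together with Taylor's construction. Base cases would consist of residually reducible or dihedral $\rho$, handled by direct constructions from Hecke characters.

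The main obstacle, and the reason this remains a conjecture, is the descent step combined with the coordination of residue characteristics. Solvable base change only gives enough to descend from $K'$ to $K$ when the Galois extension $K'/K$ is solvable and linearly disjoint from the relevant number fields; arranging this \emph{and} the modularity lifting hypotheses \emph{and} the existence of a global lift with the required local properties, simultaneously at two different primes $\ell$ and $\ell'$, requires a great deal of freedom in the deformation theory that is not fully available over a general totally real $K$. In particular, the analogues of the Khare--Wintenberger "killing ramification" arguments rely on very precise control over local deformation rings at primes above $\ell$ in $K$, and for $K \neq \Q$ one runs into the same phenomena that make the weight part of the refined conjecture (discussed in the body of this paper) so delicate. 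Consequently, I would expect a complete proof of Conjecture~\ref{conj:weak} to require substantial new input beyond a direct translation of the techniques available for $K=\Q$.
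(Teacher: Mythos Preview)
The statement you are attempting to prove is Conjecture~1.1, which the paper presents as a \emph{conjecture}, not a theorem; there is no proof of it in the paper, and indeed it remains open for general totally real $K$. So there is no ``paper's own proof'' to compare against. You correctly recognise this yourself in your final paragraph, so your write-up is really a discussion of strategy rather than a proof.

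That said, the obstacle you single out is not the one the paper emphasises. You focus on the descent step and the difficulty of coordinating local deformation conditions at two primes $\ell$ and $\ell'$. The paper instead points to something more basic: the Khare--Wintenberger induction over $\Q$ bottoms out at the fact that for certain small primes $\ell$ there are \emph{no} two-dimensional irreducible odd mod~$\ell$ representations of $G_\Q$ unramified outside~$\ell$. Over a general totally real field $K$ this base case is simply false --- there are elliptic curves over $K$ with everywhere good reduction, and their $\ell$-torsion gives such representations (the paper gives the explicit example $y^2 + xy + \epsilon^2 y = x^3$ over $\Q(\sqrt{29})$ with $\epsilon = (5+\sqrt{29})/2$). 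So even before one reaches the technical issues you describe, the induction has nowhere to start. Your proposal would be strengthened by acknowledging this: the problem is not merely that the inductive step is harder, but that the anchor of the induction is missing.
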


Here ``totally odd'' means that $\det(\rho(c))=-1$ for all $[K:\Q]$
complex conjugations~$c$. Note that one could instead have defined $\rho_f$ to
be the representation with the property that the trace of an \emph{arithmetic}
Frobenius was equal to the corresponding Hecke eigenvalue, which
is the same as replacing $\rho_f$ by its dual,
but the ``geometric'' conjecture above is trivially
equivalent to the ``arithmetic'' version ($\rho$ is geometrically
modular if and only if its dual is arithmetically modular). 
Although the Khare-Wintenberger approach to Serre's original conjecture may
shed light on Conjecture~\ref{conj:weak} for a few explicit totally real
fields~$K$, it seems not
(in its present form) to be able to attack the case of a general~$K$ because it
relies on an induction and the fact that for certain small primes $\ell$
there are no 2-dimensional irreducible odd mod $\ell$ representations
of $G_{\Q}$ unramified outside $\ell$; however for a totally real field
the analogous fact is not in general true---there are plenty of elliptic
curves over totally real fields with everywhere good reduction, and the
$\ell$-torsion in these curves will generally give rise to such representations.
As an explicit example, one can check easily using a computer that if
$\epsilon=\frac{5+\sqrt{29}}{2}$ then 
the elliptic curve
$$y^2+xy+\epsilon^2y=x^3$$
has everywhere good reduction over $\Q(\sqrt{29})$ and that the Galois
representation on the 2-torsion is absolutely irreducible.
\footnote{Prof. J.-P.\ Serre informs us that this example of a curve
with everywhere good reduction was found by Tate, and notes that it
is discussed in \S5.10 of Serre's 1972 Inventiones
article ``Propri\'et\'es galoisiennes des points d'ordre fini des courbes
elliptiques'' (Oeuvres 94).}

The main aim of this paper is to refine Conjecture~\ref{conj:weak}
along the lines of Serre's refinement for the case $K = \Q$,
in the special case where the prime $\ell$ is unramified in~$K$.
Perhaps surprisingly, this is not as simple as it sounds.
The main difficulty is in specifying the weight where, even in this
unramified situation, we encounter
several subtleties not present in Serre's original work. Note first of all
that there is no obvious notion of a minimal weight. Moreover
the possible weights and level structures at primes
over~$\ell$ are intertwined, and, contrary to the case $K=\Q$,
one does \emph{not} always expect a
representation as in Conjecture~\ref{conj:weak} to arise from a 
classical Hilbert modular form of level prime to $\ell$. Indeed,
the mod~$\ell$ representation attached to such a form
has determinant equal to the product of a finite order character unramified
at~$\ell$ and some power of the mod~$\ell$ cyclotomic
character, and it is not hard to construct a mod $\ell$ Galois representation
whose determinant is not of this form. To deal with these issues,
we introduce the notion of a {\em Serre weight},
namely an irreducible $\ol{\F}_\ell$-representation~$\sigma$
of $\GL_2(\CO/\ell)$,
and define what it means for $\rho$ to be modular of weight $\sigma$.
Such a notion of weight is implicit in work of Ash and Stevens
\cite{as1, as2}, its relation to Serre's conjecture underlies 
Khare's paper~\cite{khare:pp}, and its role in generalizing the
conjecture to $\GL_n$ over $\Q$ is evident in \cite{ash2, ash1}.
Our aim is to describe all possible Serre weights
of forms giving rise to a representation $\rho$.

When working with classical modular forms, certain choices for normalisations
and conventions have now become standard. In the Hilbert case there
are various possibilities for these choices, and experience has
shown the authors that things become ultimately less
confusing if one works with holomorphic automorphic representations
as in~\cite{carayol:ens} rather than Hilbert modular forms, the advantage
of this approach being that now the \emph{only} choice one has to make
is the normalisation of the local Langlands correspondence. We follow
Carayol in our approach and use Hecke's normalisation rather than
Langlands'. We explain our conventions more carefully later on.

Our weight conjecture (Conjecture~\ref{conj:weight} below) then takes the 
form of a recipe for the set of weights $\sigma$ for which $\rho$ is modular. 
Our notion of modularity is formulated in terms of $\rho$ arising in the
Jacobian (or equivalently cohomology) of Shimura curves associated to
quaternion algebras over $K$;
the weight recipe is given in terms of the local behaviour of $\rho$ at primes over
$\ell$.  The precise recipe is motivated by the expected compatibility with
the (labelled) Hodge-Tate weights in $[0,\ell]$ of crystalline lifts of
twists of $\rho|_{G_{K_\gp}}$ for primes $\gp|\ell$.  One can use
Fontaine-Laffaille theory \cite{fon_laf} to describe the reductions of
crystalline representations with Hodge-Tate weights in the range $[0,\ell-1]$;
that the resulting recipe extends to the ``boundary'' is supported by
numerical evidence collected by
one of the authors, Demb\'el\'e and Roberts~\cite{ddr}.

Loosely speaking, Conjecture~\ref{conj:weak} can be thought of in the
context of a ``mod $\ell$ Langlands philosophy'', with Conjecture~\ref{conj:weight} 
predicting a local-global compatibility at primes over $\ell$.
An interesting feature of the recipe for the weights is that if $\ell$ is inert
in $K$ and $\rho|_{G_{K_\ell}}$ is semisimple, then the set of weights that
we associate to $\rho$ are the Jordan--H\"older factors of the reduction
of an \emph{irreducible} $\ol{\Q}_\ell$-representation
of $\GL_2(\CO/\ell)$.
This is proved in \cite{fred:durham}, where it is further shown that this association
establishes a correspondence between $2$-dimensional
Galois representations of a local field {\em in its residue characteristic}
and representations of $\GL_2$ of its residue field
{\em in characteristic zero}. Herzig~\cite{herzig}
has shown that this phenomenon does not
persist in the context of $\GL_n$ for $n > 2$, but rather
is a property particular to $n\le 2$ of a more general 
relation that he conjectures
between the set of Serre weights and the reduction of a characteristic
zero representation associated to $\rho$.

In \cite{emerton:draft-a, emerton:draft-b}, Emerton made precise
the sense in which Serre's refined conjecture
could be viewed as part of a mod $\ell$ Langlands philosophy (in the case
$K=\Q$). Using automorphic forms, he associates to $\rho$ an $\Flbar$-representation $\pi(\rho)$
of $\GL_2(\A_\Q)$, which is non-zero by the theorem of
Khare and Wintenberger. Emerton conjectures, and shows under some technical hypotheses,
that it factors as a restricted tensor product of local factors $\pi_p$, where $\pi_p$ is
a smooth admissible representation of $\GL_2(\Q_p)$ determined by $\rho|_{G_{\Q_p}}$.
Serre's refined conjecture can then be recovered from properties of the $\pi_p$;
moreover, results such as those in \cite{dt:duke} and \cite{khare:pp} describing the possible
weights and levels of forms giving rise to $\rho$ can also be extracted.
We go on to formulate a conjecture in the spirit of Emerton's in the context of
certain quaternion algebras over $K$. In order to do so, we need to associate
a local factor to $\rho|_{G_{K_\gp}}$ when $\gp$ is a prime not dividing $\ell$.
This was already done by Emerton if the quaternion algebra is split at $\ell$;
we augment this with a treatment of the case where it is ramified using results
of Vign\'eras~\cite{vigneras:quat}.
 
This paper is structured as follows. In \S\ref{sec:setup} we introduce the
notion of a Serre weight and our notation and conventions regarding
automorphic representations for $\GL_2$ over $K$;
we explain what it means for $\rho$ to be modular of
a given Serre weight
and relate this notion to the existence of automorphic representations $\pi$
such that $\rho \sim \ol{\rho}_\pi$.
In \S\ref{sec:recipe} we formulate Conjecture~\ref{conj:weight} giving a
recipe for the set of Serre weights for which $\rho$ is modular,
and we show that it follows from known results on Serre's Conjecture
in the case $K=\Q$ (Theorem \ref{thm:Qconj}). 
Finally, in \S\ref{sec:refined} we state our partial generalisation
of Emerton's refined conjecture and derive some consequences.

\noindent{\bf Acknowledgements:} 
Much of the research was carried out while one of the authors (F.D.) was at Brandeis
University, with support from NSF grants~\#9996345, 0300434.
K.B.\ was partially supported by an EPSRC Advanced Research Fellowship.
We are grateful to R.~Taylor for raising our interest in
the questions considered here. The last section of the paper was greatly influenced
by discussions at a workshop at the American Institute of Mathematics on $p$-adic
Representations in February~2006 attended by K.B.\ and F.D.; we heartily thank
AIM and the workshop organisers and participants, especially M.~Emerton.
The influence of his ideas on that part of this work is apparent; we are also
grateful to him for subsequent correspondence, in particular communicating
details of \cite{emerton:draft-a, emerton:draft-b}.
We also benefited from conversations and correspondence with C.~Breuil, B.~Conrad, 
L.~Demb\'el\'e and T.~Gee in the course of writing this paper.
Finally the authors thank F.~Herzig, M.~Schein and R.~Torrey for their
helpful comments and corrections on an earlier version of the paper,
and of course the referees, who between them pointed out numerous typos
and made several remarks which helped improve the presentation.
Any remaining errors are of course the fault of the authors.

\section{Serre weights}\label{sec:setup}
Suppose $K$ is a totally real field (we allow $K=\Q$)
and let $\CO$ denote its
ring of integers. Let $\ell$ be a rational prime, which we assume from the
outset is unramified in $K$ (although some of this section certainly
could be made to work in more generality). Recall that we have
fixed embeddings $\Qbar\to\C$ and $\Qbar\to\Qlbar$, and also an
identification of $\Flbar$ with the residue field of $\Qlbar$. Let
$S_K$ denote the embeddings $K\to\R$ and let us fix once and for
all a preferred embedding $\tau_0:K\to\R$.

Consider the group
$$G = \GL_2(\CO/\ell\CO) \cong \prod_{\gp|\ell} \GL_2(\CO/\gp).$$
A {\em Serre weight} is an isomorphism class
of irreducible $\ol{\F}_\ell$-representations of $G$. These can be described
explicitly as follows. For each prime $\gp$ of $K$ dividing $\ell$,
set $k_\gp = \CO/\gp$,
$f_\gp = [k_\gp:\F_\ell]$ and let $S_\gp$ be the set of embeddings
$\tau:k_\gp \to \ol{\F}_\ell$. Then every irreducible $\ol{\F}_\ell$-representation
of $\GL_2(k_\gp)$ is equivalent to one of the form
$$V_{\vec{a},\vec{b}} = 
\bigotimes_{\tau\in S_\gp}(\det{}^{a_\tau}\otimes_{k_\gp}\mathrm{Symm}^{b_\tau-1}k_\gp^2)\otimes_\tau\ol{\F}_\ell,$$
where $a_\tau$, $b_\tau \in \Z$ and $1\le b_\tau\le \ell$ for each $\tau \in S_\gp$.
Moreover we can assume that $0 \le a_\tau \le \ell-1$ for each
$\tau \in S_\gp$ and that $a_\tau < \ell - 1$ for some $\tau$,
in which case the resulting $(\ell^{f_\gp}-1)\ell^{f_\gp}$ representations
$V_{\vec{a},\vec{b}}$ are also inequivalent.
The irreducible representations of $G$ are thus of the form
$V = \otimes_{\{\gp|\ell\}} V_\gp$, where the tensor product is over
$\ol{\F}_\ell$ and each $V_\gp$ is of the form $V_{\vec{a},\vec{b}}$
for $(\vec{a},\vec{b})$ as above.

If $n$ is an integer then
we let $\Fl(n)$ denote the 1-dimensional
$\Fl$-vector space with left $G$-action defined by letting $g\in G$
act via $N(\det(g))^n$, where $N:(\CO/\ell\CO)^\times\to(\Z/\ell\Z)^\times$
is the norm. If $\F$ is a field of characteristic $\ell$
and $V$ is an $\F$-representation of $G$
space then we define $V(n)$ to be the $\F$-representation
$V(n):=V\otimes_{\Fl}\Fl(n)$.
Note that $V_{\vec{a},\vec{b}}(n)=V_{\vec{a}+n.\vec{1},\vec{b}}$, where
$\vec{1}=(1,1,\ldots,1)$.

Suppose that $D$ is a quaternion algebra over $K$ split at $\tau_0$ and
at no other infinite places.
Fix an isomorphism $D\otimes_{K,\tau_0}\R=M_2(\R)$; this
induces an isomorphism of $(D\otimes_{\tau_0}\R)^\times$ with $\GL_2(\R)$,
which acts on $\uhp^\pm:=\C\backslash\R$ in the usual way.
Consider $K$ as a subfield
of $\R$ (and hence of $\C$) via the embedding~$\tau_0$.
If $\A_K^f$ denotes the finite adeles of $K$
and $U$ is an open compact subgroup of $(D\otimes_K\A_K^f)^\times$
then
there is a Shimura curve $Y_U$ over $K$, a smooth algebraic curve
whose complex points (via $\tau_0:K\to\C$) are naturally identified with 
$$D^\times\backslash
\left( (D\otimes_K \A^f_K)^\times\times\uhp^\pm\right)
/ U$$
and such that $Y_U$ is a canonical model for this space, in the
sense of Deligne. These canonical models have the useful property
that if $U'$ is a normal compact open subgroup of $U$,
then the natural right action of $U/U'$ on $Y_{U'}(\C)$ is induced
by an action of $U/U'$ on $Y_{U'}$ (that is, the action is
defined over~$K$).

Unfortunately there is more than one convention for these
canonical models, and the choice that we make genuinely affects our
normalisations. To fix ideas, we shall follow the conventions of Carayol
in~\cite{carayol:comp} and in particular our ``Hodge structure''
$h$ will be that of section~0.1 of~\cite{carayol:comp}.
This corresponds to the choice $\epsilon=-1$ in the notation of
\cite{cornut-vatsal}. See Section~3.3 of~\cite{cornut-vatsal}
for a discussion of the differences between this choice
and the other natural choice---the key one being (Lemma 3.12
of~\cite{cornut-vatsal}) that the choice \emph{does} affect the Galois action
on the adelic component group, by a sign. That this ambiguity exists
is not surprising: for example in the elliptic curve case the modular
curve $Y(\ell)$ parametrising elliptic curves equipped with
generators of their $\ell$-torsion exists (for $\ell>2$)
as a moduli space over $\Q$, and the Weil pairing gives a
natural morphism $Y(\ell)\to\Spec(\Q(\zeta_\ell))$, but the two
ways of normalising the Weil pairing give different morphisms.

If $K = \Q$ and $D$ is split (we refer to this case as ``the split case''),
we let $X_U$ denote the standard compactification
of the modular curve $Y_U$; otherwise (that is, if $K\not=\Q$ or
if $K=\Q$ but $D$ is non-split) we simply set $X_U=Y_U$.
Then $X_U$ is a smooth projective algebraic curve over $K$. Note that
$X_U$, considered as a scheme over $K$, will be connected (see section~1.3
of~\cite{carayol:comp}) but not in general geometrically connected.
Note also that, with notation as above, the natural action of $U/U'$ 
on $Y_{U'}$ extends to an action on $X_{U'}$. Opting to include
the split case does sometimes increase the length of a proof
(we have to verify that ``all errors are Eisenstein'')
but is arguably morally better than presenting
proofs only in the non-split case and then merely asserting that they may be
modified to deal with the split case too.

If $U$ is a compact open subgroup of $(D\otimes_K\A_K^f)^\times$
as above, then let $\Pic^0(X_U)$ denote the identity component
of the relative Picard scheme of $X_U\to\Spec(K)$. This definition
is chosen specifically to deal with the fact
that $X_U$ may not be geometrically connected. In more concrete
terms, if $K_U$ denotes the
ring $\Gamma(X_U,\CO_{X_U})$
of globally-defined functions on $X_U$, then $K_U$ is a number field
and a finite abelian extension of $K$, the curve $X_U$ is geometrically
connected when regarded as a scheme over $\Spec(K_U)$, 
and $\Pic^0(X_U)$ is canonically isomorphic to the
restriction of scalars (from~$K_U$ to~$K$) of the Jacobian of $X_U/K_U$.
In particular, $\Pic^0(X_U)$ is an abelian variety over $K$.

We henceforth assume that $D$ is split at all primes of $K$ above $\ell$,
and we fix an isomorphism
$D\otimes_\Q\Q_\ell\cong M_2(K\otimes\Q_\ell)$.
We can now regard $\GL_2(\CO\otimes\Z_\ell)$ as a subgroup
of $(D\otimes_K\A_K^f)^\times$. If $U$ is a compact open subgroup
of $(D\otimes_K\A_K^f)^\times$ as above, and if $\GL_2(\CO\otimes\Z_\ell)$
is contained in~$U$, then we say
that $U$ has \emph{level prime to $\ell$}. In this case, the
natural map $U\to\GL_2(\CO/\ell\CO)=G$ is a surjection. Let $U'$
denote its kernel. Then $U/U'=G$ acts naturally on the right on $Y_{U'}$
and on $X_{U'}$, and hence naturally on the left on $\Pic^0(X_{U'})$ and
$\Pic^0(X_{U'})[\ell](\Kbar)$. Let us say that $U$ is
\emph{sufficiently small}
if it has level prime to $\ell$ and the map $Y_{U'}\to Y_U$ is \'etale
of degree equal to the order of~$G$. Note that any $U$ of level prime
to $\ell$ contains a compact open subgroup that is sufficiently small---this
follows easily from 1.4.1.1--1.4.1.3 of~\cite{carayol:comp} or Lemma~12.1
of~\cite{frazer:mazur}. The induced
map $X_{U'}\to X_U$ will then be finite of degree equal to the order of~$G$
(but it may not be \'etale in the split case---there will usually
be ramification at the cusps).

\begin{definition}\label{def:modular}
Suppose that $\rho: G_K \to \GL_2(\ol{\F}_\ell)$ is a continuous, irreducible
representation and $V$ is a finite-dimensional $\Flbar$-vector
space with a left action of~$G$. We say
that $\rho$ is 
{\em modular of weight $V$} if there is a quaternion algebra $D$ over $K$
split at the primes above $\ell$, at $\tau_0$ and no other infinite places of
$K$, and a sufficiently small open compact subgroup $U$ of
$(D \otimes_K\A_K^f)^\times$ of level prime to $\ell$, such that
$\rho$ is an $\ol{\F}_\ell G_K$-subquotient of 
$\left(\mathrm{Pic}^0(X_{U'})[\ell](\Kbar)\otimes V\right)^G$, where 
$U'=\ker(U\to G)$, $G$ acts diagonally on the tensor product,
and $G_K$ acts trivially on $V$.
\end{definition}

Note that we allow $V$ to be a reducible $G$-representation (out of
convenience at this point in the exposition: we will see in
Lemma~\ref{kevinscontribution}(b)
that $V$ can ultimately be assumed irreducible with no loss of
generality). 
Note also that we allow subquotients with respect to the Galois action
although using Hecke operators and the Eichler-Shimura relation
on $X_{U'}$, one can show that
if we replace ``$G_K$-subquotient'' by ``$G_K$-submodule'' then
the resulting definition is equivalent.
On the other hand, we really want to demand that $\rho$ is a $G_K$-subquotient
of the $G$-invariants
of $\mathrm{Pic}^0(X_{U'})[\ell](\Kbar)\otimes V$ rather than
an $\Flbar[G_K\times G]$-subquotient on which
$G$ acts trivially.
Our conjecture would not be correct were we to use $G$-subquotients;
a general Galois representation would then be modular of more
weights and we would not recover important subtleties of the refined
conjecture.

Say $V$ is any finite-dimensional $\Flbar$-vector space equipped with a left
$G$-action. The right action of $G$ on $Y_{U'}$
enables us to identify $G$ with a quotient of $\pi_1(Y_U,x)$ for
$x$ any geometric point of $Y_U$.
Now a standard construction (see for example A I.7 of~\cite{freitag-kiehl})
associates
to~$V$ a locally constant \'etale sheaf $\CF_V$ on $Y_U$, with (amongst
other things) the property
that the pullback of $\CF_V$ to $Y_{U'}$ is just the constant sheaf
associated to the vector space~$V$. We abuse notation slightly by
also using $\CF_V$ to refer to the pullback of $\CF_V$ to
$Y_{U,\Kbar}$, the base change of $Y_U$ to $\Kbar$.

Later on we will also need an $\ell$-adic variant of this construction,
for which a good reference is sections~2.1.3 and~2.1.4 of~\cite{carayol:ens}.
Let $U\subset(D\otimes_K\A_K^f)^\times$ be compact and open, and let $Y_U$
denote the associated Shimura curve over $K$. 
If $\vec{k}\in\Z_{\geq2}^{S_K}$ and $w\in\Z$ with $w\equiv k_\tau$
mod~2 for all $\tau$, then we would like as in loc.\ cit.\ to
define an $\ell$-adic
sheaf corresponding to the representation
$\otimes_{\gp|\ell}\otimes_{\tau\in 
S_\gp}\det^{(w-k_\tau+2)/2}\Symm^{k_\tau-2}(\xi_\tau)$ of $D^\times$,
where $\xi_\tau$
is the tautological
2-dimensional representation of $(D\otimes_{F,\tau}\C)^\times$
induced by a fixed isomorphism $D\otimes_{K,\tau}\C=M_2(\C)$.
We do this as follows. If $E\subset\Qbar$ is a number field, Galois over $\Q$,
splitting $D$ and containing all embeddings of $K$ into $\Qbar$,
then the representation above can be realised on an $E$-vector space.
If $\lambda|\ell$ is a prime of $E$, then (possibly after replacing~$U$
by a finite index subgroup to remove any problems with elliptic
points) Carayol defines an $\CO_{E_\lambda}$-sheaf $\CF^0_{\lambda,U}$ 
on $Y_U$ associated to the representation above, and an $E_\lambda$-sheaf
$\CF_{\lambda,U}=\CF^0_{\lambda,U}\otimes\Q_\ell$.
Strictly speaking the sheaf $\CF^0_{\lambda,U}$ depends on a choice of lattice,
which we always take to be the one arising
from tensor products of symmetric powers of $\CO_{E_\lambda}^2$.
Furthermore, if we choose $\lambda$ so that it
is the prime of $E$ above $\ell$ induced by our embedding $E\to\Qbar\to\Qlbar$
then there is an induced map $\CO_{E_\lambda}/\lambda\to\Flbar$
and the induced $\Flbar$-sheaf $\CF^0_{\lambda,U}/\lambda\otimes\Flbar$
is isomorphic to the sheaf $\CF_W$
associated to the representation
$W=\otimes_{\gp|\ell}\otimes_{\tau\in 
S_\gp}\det^{(w-k_\tau+2)/2}\Symm^{k_\tau-2}k_\gp^2\otimes_\tau\Flbar$.

We begin by noting that $G_K$ acts via an abelian quotient on many
of the cohomology groups that show up in forthcoming arguments.

\begin{lemma}\label{eisenstein}
(a) Let $U$ be any compact open subgroup of $(D\otimes_K\A_K^f)^\times$
and let $\CF$ be a locally constant torsion sheaf on $Y_U$ corresponding
to a continuous representation of $U/U'$ for some normal open compact
$U' \subseteq U$ such that $Y_{U'} \to Y_U$ is \'etale with covering
group $U/U'$.
Then for $i\in\{0,2\}$ the action of $G_K$ on the
cohomology groups $H^i(Y_{U,\Kbar},\CF)$ and $H^i_c(Y_{U,\Kbar},\CF)$
factors through an abelian quotient.

(b) If $\CF_{\lambda,U}$ is the sheaf associated to the data $(\vec{k},w)$
as above, and if $H^1_p(Y_{U,\Kbar},\CF_{\lambda,U})$
denotes the image of $H^1_c(Y_{U,\Kbar},\CF_{\lambda,U})$
in $H^1(Y_{U,\Kbar},\CF_{\lambda,U})$,
then the action of $G_K$ on
$H^1(Y_{U,\Kbar},\CF_{\lambda,U})/H^1_p(Y_{U,\Kbar},\CF_{\lambda,U})$
is via an abelian quotient.

(c) The action of $G_K$ on the cokernel of the natural inclusion
$H^1(X_{U,\Kbar},\F_\ell)\to H^1(Y_{U,\Kbar},\F_\ell)$ is via
an abelian quotient.
\end{lemma}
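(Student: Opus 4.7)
My plan is to reduce all three parts to two structural observations: (i) the geometric connected components of $Y_U$ (and, in the split case, the cuspidal divisor $Z = X_U \setminus Y_U$) are permuted by $G_K$ through an abelian quotient; and (ii) the mod-$\ell$ cyclotomic character is abelian. Observation (i) for components follows from the description of $\pi_0(Y_{U,\Kbar})$ as a finite double coset space $K^\times_+\backslash(\A_K^f)^\times/\nu(U)$ (with $\nu$ the reduced norm), on which $G_K$ acts via the reciprocity map of class field theory; for the cusps in the split case $K = \Q$, the classical moduli description shows they are defined over a cyclotomic extension.

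For part (a), I would use Hochschild--Serre for the Galois \'etale cover $Y_{U'} \to Y_U$ with group $G = U/U'$ to reduce to constant coefficients on $Y_{U'}$. The resulting spectral sequence
$$E_2^{p,q} = H^p(G, H^q(Y_{U',\Kbar}, V)) \Rightarrow H^{p+q}(Y_{U,\Kbar}, \CF)$$
has $G_K$-action coming only from the action on $H^q(Y_{U',\Kbar}, V)$, so it suffices to verify the claim with constant coefficients $V$ on $Y_{U'}$. For $i = 0$, $H^0(Y_{U',\Kbar}, V)$ is a direct sum of copies of $V$ indexed by $\pi_0(Y_{U',\Kbar})$ on which $G_K$ acts only by permutation (hence abelianly); $H^0_c$ is the corresponding subsum over proper components. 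For $i = 2$, both $H^2(Y_{U',\Kbar}, V)$ and $H^2_c(Y_{U',\Kbar}, V)$ are direct sums (over proper components) of copies of $V(-1)$, yielding a permutation action tensored with the cyclotomic character, still abelian.

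For part (c), the statement is vacuous in the non-split case where $X_U = Y_U$. In the split case, the Gysin sequence for the smooth pair $(X_U, Z)$ with $Z$ of codimension one gives
$$0 \to H^1(X_{U,\Kbar},\F_\ell) \to H^1(Y_{U,\Kbar},\F_\ell) \to H^0(Z_{\Kbar},\F_\ell(-1)) \to H^2(X_{U,\Kbar},\F_\ell),$$
so the cokernel embeds into $H^0(Z_{\Kbar},\F_\ell(-1))$, whose $G_K$-action factors through the permutation of cusps tensored with the cyclotomic character---both abelian.

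Part (b) is again trivial in the non-split case, since $Y_U$ is proper and so $H^1_c = H^1$. In the split case, I would combine the excision sequence $0 \to j_!\CF_{\lambda,U} \to j_*\CF_{\lambda,U} \to i_*i^*j_*\CF_{\lambda,U} \to 0$ (with $j$ and $i$ the obvious immersions of $Y_U$ and $Z$) with the Leray spectral sequence for $j$ to bound the cokernel of $H^1_c \to H^1$ by a subquotient of $H^0(Z_{\Kbar}, i^*R^1j_*\CF_{\lambda,U})$, a finite sum over the geometric cusps of local boundary data. The hard part will be to verify that the $G_K$-action on each such stalk---\emph{a priori} involving the decomposition group at the cusp acting on $H^1$ of tame local monodromy with the generic stalk of $\CF_{\lambda,U}$ as coefficients---also factors through an abelian quotient; this should reduce to the observation that the local monodromy at each cusp is pro-cyclic unipotent and that the decomposition group acts on it through the cyclotomic character (together with the abelian permutation already identified in observation (i)), so that the composite action remains abelian.
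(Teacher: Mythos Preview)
Your argument for part~(a) when $i=0$ is fine (and essentially the paper's: pull back to $Y_{U'}$ where the sheaf becomes constant, and use that the geometric components of $Y_{U'}$ are defined over an abelian extension of $K$). The gap is at $i=2$. The Hochschild--Serre filtration on $H^2(Y_{U,\Kbar},\CF)$ has graded pieces coming from $E_2^{0,2}$, $E_2^{1,1}$ and $E_2^{2,0}$. You control the outer two via $H^0$ and $H^2$ of $Y_{U',\Kbar}$ with constant coefficients, but $E_2^{1,1}=H^1(G,H^1(Y_{U',\Kbar},V))$ involves $H^1(Y_{U',\Kbar},V)$, whose $G_K$-action is typically \emph{not} abelian --- this is precisely where the interesting Galois representations live. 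So ``it suffices to verify the claim with constant coefficients on $Y_{U'}$'' is not justified for $i=2$. The paper sidesteps this entirely: it proves the $i=0$ case as you do, and then deduces $i=2$ from $i=0$ by Poincar\'e duality (which pairs $H^0$ with $H^2_c$ and $H^0_c$ with $H^2$, applied to the dual sheaf). That is both simpler and avoids the $E_2^{1,1}$ obstruction.

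For part~(b) your plan is dual to the paper's but left incomplete at exactly the point that needs work. The paper first dualizes (the sheaf for $(k,w)$ is $\Q_\ell$-dual to that for $(k,-w)$, so by Poincar\'e duality it suffices to handle the \emph{kernel} of $H^1_c\to H^1$), and then bounds that kernel by $H^0(Z_{\Kbar},i^*j_*\CF_{\lambda,U})$ rather than your $H^0(Z_{\Kbar},i^*R^1j_*\CF_{\lambda,U})$. The advantage is that $i^*j_*\CF_{\lambda,U}$ can be computed cleanly: shrinking $U$ so that a universal generalised elliptic curve $s:E_U\to X_U$ exists, one has $j_*\CF_{\lambda,U}\cong \Symm^{k-2}(R^1s_*\Q_\ell)((w+k-2)/2)$, and since $E_U\times_{X_U}Z_U\cong\Gm{}_{,Z_U}$, the stalk at each cusp is the one-dimensional space $\Q_\ell((w-k+2)/2)$. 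Thus the relevant $H^0$ over the cusps is a Tate twist of a permutation module on the (cyclotomically defined) cusps --- manifestly abelian. Your $R^1j_*$ approach would require an analogous identification of the monodromy coinvariants with a Tate twist, which you gesture at but do not carry out; the paper's route is more direct. Part~(c) is correct and matches the paper.
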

\begin{proof}

(a) The pullback of $\CF$
to $Y_{U'}$ is constant and
$H^0(Y_{U,\Kbar},\CF)$ can be identified with a subspace of
$H^0(Y_{U',\Kbar},\CF)$. The Galois action on this latter space
is abelian, as the geometric components of $Y_{U'}$ are defined
over an abelian extension of $K$ (see for example \S1.2
of~\cite{carayol:comp}). This proves the result for $H^0$,
and the result for $H^0_c$ follows as $H^0_c$ is a subgroup of $H^0$.
For $i=2$ the result follows from the $i=0$ case and Poincar\'e duality,
which pairs $H^0$ with $H^2_c$ and $H^2$ with $H^0_c$.

(b) The quotient is trivial in the non-split case, so we are instantly
reduced to the case $K=\Q$, $D=M_2(\Q)$ and $E_\lambda = \Q_\ell$. In this case the result is
surely well-known, but we sketch the proof for lack of a reference.
The sheaf associated to the data $(k,w)$ is $\Q_\ell$-dual to the sheaf
associated to $(k,-w)$, so by Poincar\'e duality, this is equivalent 
to showing the Galois action factors through an abelian quotient
on the kernel of the map (with $w$ replaced by $2-w$).
 Recall that
$H_c^1(Y_{U,\Qbar},\CF_{\lambda,U}) = H^1(X_{U,\Qbar},j_!\CF_{\lambda,U})$
where $j:Y_U \to X_U$ is the natural inclusion, and that our map factors as
$$H^1(X_{U,\Qbar},j_!\CF_{\lambda,U}) \to 
H^1(X_{U,\Qbar},j_*\CF_{\lambda,U}) \to H^1(Y_{U,\Qbar},\CF_{\lambda,U}),$$
the first map being surjective, the second injective.
Therefore it suffices to prove that the action of $G_\Q$ on
$$H^0(X_{U,\Qbar},j_*\CF_{\lambda,U}/j_!\CF_{\lambda,U})
= H^0(Z_{U,\Qbar},i^*j_*\CF_{\lambda,U})$$
factors through an abelian quotient, where $i:Z_U \to X_U$ is the reduced
closed subscheme defined by the cusps. Shrinking $U$ if necessary, we
can assume there is a universal generalised elliptic curve over $X_U$
(in the sense of \cite{deligne-rapoport}),
and we let $s:E_U\to X_U$ denote its restriction to the open subscheme whose
fibres over closed points are its identity components. Then we find that 
$j_*\CF_{\lambda,U}$ is isomorphic to $\Symm^{k-2}(R^1s_*\Q_\ell)((w+k-2)/2)$,
and since $E_U\times_{X_U}Z_U$ is isomorphic to ${\mathbf G}_{{\mathrm m},Z_U}$, we conclude that
$i^*j_*\CF_{\lambda,U}$ is isomorphic to $\Q_\ell((w-k+2)/2)$. The assertion
now follows from the fact that the cusps are defined over an abelian extension
of $\Q$.

(c) is similar to (b), but simpler.
\end{proof}

Let $D$, $U$, $U'$, $G$
and $V$ be as in Definition~\ref{def:modular}.
If $\psi$ is a continuous character of $G_K$ with values in
$\Flbar^\times$ or $\Qlbar^\times$, then we let $\psi_\A$
denote the character of $\A_K^\times$ corresponding to $\psi$
by class field theory (with uniformizers corresponding to
geometric Frobenius elements).

\begin{lemma}\label{fctk}
Suppose that $\psi:G_K\to\Flbar^\times$ is a continuous character
such that $\psi_\A$ is trivial on $\det(U')$, and let 
$\chi$ denote the restriction of $\psi_\A$ to $(\CO\otimes\Z_\ell)^\times$.
Then $H^1(Y_{U,\Kbar},\CF_{V\otimes\chi\circ\det})$
is isomorphic to $H^1(Y_{U,\Kbar},\CF_V)(\psi)$ as $G_K$-modules.
\end{lemma}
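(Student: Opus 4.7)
The plan is to factor the twist at the level of sheaves and then identify the rank-one piece $G_K$-equivariantly using the reciprocity property of the canonical model. First, I check that $\chi \circ \det$ makes sense as a character of $G$: since $U$ has level prime to $\ell$, the subgroup $U' = \ker(U \to G)$ contains every element of $\GL_2(\CO \otimes \Z_\ell)$ congruent to $1$ modulo $\ell$, so $\det(U')$ contains $1 + \ell(\CO \otimes \Z_\ell)$. The hypothesis $\psi_\A|_{\det(U')} = 1$ then forces $\chi$ to factor through $(\CO/\ell)^\times$, so $\chi \circ \det : G \to \Flbar^\times$ is a well-defined one-dimensional character. Because the association $W \mapsto \CF_W$ is a tensor functor from representations of $G$ to \'etale local systems on $Y_U$ (via the \'etale cover $Y_{U'} \to Y_U$), one obtains a canonical isomorphism $\CF_{V \otimes \chi \circ \det} \cong \CF_V \otimes_\Flbar \CF_{\chi \circ \det}$ of $G_K$-equivariant sheaves on $Y_{U, \Kbar}$.

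The core step is to identify $\CF_{\chi \circ \det}$ on $Y_{U, \Kbar}$ with the constant sheaf $\Flbar$ equipped with the $G_K$-action by $\psi$. Set $U^\natural = \{u \in U : \psi_\A(\det u) = 1\}$; then $U' \subseteq U^\natural \subseteq U$ and $U/U^\natural$ is cyclic, identified with the image of $\chi \circ \det$, and $\CF_{\chi \circ \det}$ is the rank-one local system cut out from the \'etale cover $Y_{U^\natural} \to Y_U$ by this character. The claim is that the pullback of $Y_{U^\natural}$ to $Y_{U, \Kbar}$ is a disjoint union of $|U/U^\natural|$ copies of $Y_{U, \Kbar}$, with the $G_K$-permutation action on the copies given by $\psi$. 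This is exactly Shimura's reciprocity law for Carayol's canonical models (see \S1.2 of \cite{carayol:comp}): the character $\chi \circ \det$ kills the geometric monodromy since it factors through the determinant, so the cover $Y_{U^\natural} \to Y_U$ is pulled back from a cover of the component scheme $\Spec K_U$, and this latter cover, under our normalization of class field theory (uniformizers $\mapsto$ geometric Frobenius, matching Carayol's choice of $h$), corresponds to the abelian extension of $K$ cut out by $\ker\psi$. Consequently $\CF_{\chi \circ \det}|_{Y_{U,\Kbar}} \cong \Flbar(\psi)$ as $G_K$-equivariant sheaves, and passing to cohomology yields
$$H^1(Y_{U, \Kbar}, \CF_{V \otimes \chi \circ \det}) \;\cong\; H^1(Y_{U, \Kbar}, \CF_V) \otimes_\Flbar \Flbar(\psi) \;=\; H^1(Y_{U, \Kbar}, \CF_V)(\psi).$$

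The main obstacle is really the normalization check embedded in the second paragraph: everything is formal once one grants that the $G_K$-action on the connected components of $Y_U$ (and its covers) matches $\psi_\A \circ \det$ via reciprocity with the correct sign, but verifying this in the present setup---Carayol's Hodge structure $h$ together with Hecke's normalization of local Langlands---requires some care to ensure the twist that appears is $\psi$ itself rather than $\psi^{-1}$ or some unwanted Tate twist. Once this compatibility is recorded, the tensor decomposition of the first paragraph and tensor invariance of cohomology against a $G_K$-constant sheaf finish the proof.
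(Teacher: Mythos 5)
Your proof is correct and takes essentially the same approach as the paper's: both exploit that $\CF_{\chi\circ\det}$ is pulled back along $Y_U\to\Spec K_U$ (via Carayol's reciprocity law for the component scheme, \S1.1.2 and \S1.2 of \cite{carayol:comp}), so that over $\Kbar$ it becomes the constant sheaf twisted by $\psi$, after which the isomorphism on $H^1$ is formal. The paper realizes your sheaf isomorphism $\Flbar(\psi)\cong\CF_{\chi\circ\det}|_{Y_{U,\Kbar}}$ concretely by computing $H^0(Y_{U,\Kbar},\CF_{\chi\circ\det})\cong\Ind_{G_{K_U}}^{G_K}\Flbar(\chi)$, choosing a nowhere-vanishing section $\alpha$ in its one-dimensional $\psi$-isotypic part, and cupping with $\alpha$ --- which is exactly the trivialization you assert in the second paragraph.
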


\begin{proof}
The restriction of $\psi$ to $G_{K_U}$ corresponds to $\chi$ via 
the isomorphism 
$$G_{K_U}/G_{K_{U'}} \cong \det(U)/\det(U') \cong (\CO/\ell)^\times.$$
Recall from section~1.1.2 of~\cite{carayol:ens} that we have a commutative
diagram
$$\begin{array}{ccc} Y_{U'} &\to &\spec K_{U'}\\\downarrow&&
\downarrow\\
Y_U&\to&\spec K_U\end{array}$$
such that the action of $G = U/U'$ on $Y_{U'}$ is compatible via $\det$
with that of $\det(U)/\det(U')$ on $\spec K_{U'}$. So if we let
$\CF_\chi$ denote the sheaf on $\spec K_U$ corresponding
to the character $\chi$, we see that $\CF_{\chi\circ\det}$ is
isomorphic to the pull-back of $\CF_\chi$ by the 
map $Y_U \to \spec K_U$. As this map induces a bijection on geometric
components, it follows that
$H^0(Y_{U,\Kbar},\CF_{\chi\circ\det})$ is isomorphic to
$H^0(\spec K_U\times_K\Kbar,\CF_\chi)$ as $G_K$-modules, which
in turn is isomorphic to
$\Ind_{G_{K_U}}^{G_K}\Flbar(\chi)$. Therefore
$\Hom_{\Flbar[G_K]}(\Flbar(\psi), H^0(Y_{U,\Kbar},\CF_{\chi\circ\det}))$
is one-dimensional. Let $\alpha$ be the image in
$H^0(Y_{U,\Kbar},\CF_{\chi\circ\det})$ of a non-trivial element.
Note that the restriction of $\alpha$ to each component of
$Y_{U,\Kbar}$ is non-trivial, so cupping with $\alpha$ defines
an isomorphism
$$H^1(Y_{U,\Kbar},\CF_V)\otimes_{\Flbar}\Flbar\alpha
 \to H^1(Y_{U,\Kbar},\CF_{V\otimes\chi\circ\det}).$$
\end{proof}

\begin{lemma}\label{kevinscontribution}
Let $\rho:G_K\to\GL_2(\Flbar)$
be continuous, irreducible and totally odd.

(a) $\rho$ is a $G_K$-subquotient of
$\left(\mathrm{Pic}^0(X_{U'})[\ell](\Kbar)\otimes V\right)^G$
if and only if $\rho$ is a $G_K$-subquotient
of $H^1(Y_{U,\Kbar},\CF_{V(1)})$.

(b) $\rho$ is modular of weight~$W$ (an arbitrary finite-dimensional
$\Flbar[G]$-module) if and only if $\rho$ is modular of
weight~$V$ for some Jordan-H\"older factor of $W$.

\end{lemma}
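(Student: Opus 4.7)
\emph{Plan.} The strategy is to identify each side of (a) with an \'etale cohomology group attached to $Y_U$ (or $X_{U'}$), up to ``Eisenstein'' pieces, i.e.\ $G_K$-modules whose action factors through an abelian quotient. Since $\rho$ is $2$-dimensional, irreducible and totally odd, its image is non-abelian, so $\rho$ cannot occur as a $G_K$-subquotient of any such Eisenstein module; this allows these error terms to be discarded freely. Part~(b) then follows formally from~(a) via the long exact cohomology sequences attached to a composition series of~$W$.

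For~(a), I proceed in three steps. \emph{First}, Kummer theory on the smooth projective curve $X_{U'}$ gives a natural $G_K\times G$-equivariant isomorphism $\Pic^0(X_{U'})[\ell](\Kbar)\cong H^1(X_{U',\Kbar},\mu_\ell)\cong H^1(X_{U',\Kbar},\Fl)(1)$, so $(\Pic^0(X_{U'})[\ell](\Kbar)\otimes V)^G\cong H^1(X_{U',\Kbar},V)^G(1)$ as $G_K$-modules. Combining with Lemma~\ref{eisenstein}(c), which says that the cokernel of $H^1(X_{U',\Kbar},\Fl)\hra H^1(Y_{U',\Kbar},\Fl)$ is Eisenstein, and using that tensoring with~$V$ is exact while $G$-invariants are left exact, one sees that $(\Pic^0(X_{U'})[\ell](\Kbar)\otimes V)^G$ and $H^1(Y_{U',\Kbar},V)^G(1)$ differ only by an Eisenstein piece. \emph{Second}, the \'etale Galois cover $Y_{U'}\to Y_U$ with group~$G$ yields the Hochschild--Serre spectral sequence $E_2^{p,q}=H^p(G,H^q(Y_{U',\Kbar},V))\Rightarrow H^{p+q}(Y_{U,\Kbar},\CF_V)$; the only contributions to the abutment in total degree~$1$ beyond the edge piece $E_2^{0,1}=H^1(Y_{U',\Kbar},V)^G$ come from $q\in\{0,2\}$, and by Lemma~\ref{eisenstein}(a) (applied to $Y_{U'}$ with the constant sheaf~$V$, i.e.\ in the trivial-cover case) those $H^q$ carry abelian $G_K$-action, so the same is true of $H^p(G,-)$ of them. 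Thus modulo Eisenstein, $H^1(Y_{U,\Kbar},\CF_V)\cong H^1(Y_{U',\Kbar},V)^G$. \emph{Third}, Lemma~\ref{fctk} applied to $\psi$ equal to the mod-$\ell$ cyclotomic character---for which a short local class field theory computation shows that the associated $\chi$ is the norm $N$ in the paper's geometric Frobenius normalisation---gives $H^1(Y_{U,\Kbar},\CF_{V(1)})\cong H^1(Y_{U,\Kbar},\CF_V)(1)$, absorbing the outstanding Tate twist. Stringing the three steps together proves~(a).

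For~(b), by~(a) applied with a common $D$ and~$U$, it suffices to check that $\rho$ is a $G_K$-subquotient of $H^1(Y_{U,\Kbar},\CF_{W(1)})$ iff it is one of $H^1(Y_{U,\Kbar},\CF_{V(1)})$ for some Jordan--H\"older constituent~$V$ of~$W$. Since $V\mapsto\CF_V$ is exact, a composition series $0=W_0\subset W_1\subset\cdots\subset W_n=W$ of $G$-modules with graded pieces $V_i$ gives short exact sequences $0\to\CF_{W_{i-1}(1)}\to\CF_{W_i(1)}\to\CF_{V_i(1)}\to 0$ of sheaves on~$Y_U$, and the resulting long exact sequences in \'etale cohomology have their $H^0$ and $H^2$ terms Eisenstein by Lemma~\ref{eisenstein}(a). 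Hence $\rho$ is an irreducible non-abelian $G_K$-subquotient of $H^1(Y_{U,\Kbar},\CF_{W_i(1)})$ iff it is a subquotient of $H^1(Y_{U,\Kbar},\CF_{W_{i-1}(1)})$ or of $H^1(Y_{U,\Kbar},\CF_{V_i(1)})$; induction on~$i$ then gives both implications of~(b).

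The step I expect to be trickiest is the consistent handling of \emph{subquotients} rather than subobjects throughout: at each use of an exact sequence or spectral sequence I must argue that an irreducible non-abelian $\rho$ appearing as a subquotient of the middle term is forced into a ``non-Eisenstein'' outer term, using that a $2$-dimensional totally odd irreducible mod-$\ell$ representation has non-abelian image and so cannot be a subquotient of any $G_K$-module with abelian action. This observation is elementary, but it must be invoked at every transition and is what ultimately legitimises the reduction to Jordan--H\"older factors in~(b).
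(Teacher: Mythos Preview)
Your proposal is correct and follows essentially the same route as the paper: Kummer theory to identify $\Pic^0(X_{U'})[\ell]$ with $H^1(X_{U',\Kbar},\Fl)\otimes\mu_\ell$, Lemma~\ref{eisenstein}(c) to pass from $X_{U'}$ to $Y_{U'}$, the low-degree Hochschild--Serre exact sequence for the \'etale $G$-cover $Y_{U'}\to Y_U$ together with Lemma~\ref{eisenstein}(a), and finally Lemma~\ref{fctk} with $\psi$ the mod~$\ell$ cyclotomic character to absorb the twist. One small imprecision: in the Hochschild--Serre step for total degree~$1$ the only error terms involve $H^0(Y_{U',\Kbar},V)$ (via $E_2^{1,0}$ and the target $E_2^{2,0}$ of the differential), not $q=2$; this does not affect the argument, and part~(b) is exactly as in the paper.
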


\begin{proof}
(a) First recall that $\Pic^0(X_{U'})[\ell](\Kbar)
=H^1(X_{U',\Kbar},\Fl)\otimes\mu_\ell$
as $G_K\times G$-modules.
By Lemma~\ref{eisenstein}(c), the action of $G_K$ on the cokernel
of the natural injection
$$H^1(X_{U',\Kbar},\Fl) \to H^1(Y_{U',\Kbar},\Fl)$$
factors through an abelian quotient. It follows that $\rho$
is modular of weight $V$ if and only if $\rho$ is an $\Flbar[G_K]$-subquotient
of $(H^1(Y_{U',\Kbar},\Fl)\otimes V)^G\otimes\mu_\ell$.

The Hochschild-Serre spectral sequence now
gives us an exact sequence of $G_K$-modules
\begin{align*}H^1(G,H^0(Y_{U',\Kbar},\Fl)\otimes V) \to H^1(Y_{U,\Kbar},\CF_V)&\to\\
\to (H^1(Y_{U',\Kbar},\Fl)\otimes V)^G&\to H^2(G,H^0(Y_{U',\Kbar},\Fl)\otimes V).\end{align*}
By Lemma~\ref{eisenstein}(a), the action of $G_K$ on the
first and last terms factors through an abelian quotient.
It follows that $\rho$ is modular of weight $V$ if and only if
$\rho$ is an $\Flbar[G_K]$-subquotient of
$H^1(Y_{U,\Kbar},\CF_V)\otimes\mu_\ell$.
Finally note that by Lemma~\ref{fctk}, we have 
$$H^1(Y_{U,\Kbar},\CF_{V(1)}) \cong H^1(Y_{U,\Kbar},\CF_V)\otimes\mu_\ell,$$
and part~(a) of the lemma follows.

(b) If $0\to W_1\to W_2\to W_3\to 0$ is a short exact sequence
of finite-dimensional $\Flbar[G]$-modules then
$0\to\CF_{W_1}\to\CF_{W_2}\to\CF_{W_3}\to0$ is a short exact sequence
of \'etale sheaves on $Y_U$, and (b) now follows from (a) and
Lemma~\ref{eisenstein}(a).

\end{proof}

Our chosen embeddings $\ol{\Q} \to \C$ and
$\ol{\Q} \to \ol{\Q}_\ell$
and identification of the residue field of $\ol{\Z}_\ell$ with $\ol{\F}_\ell$
allow
us to identify $S_K$ with $\bigcup_{\gp|\ell} S_\gp$ (because we are
assuming that $\ell$ is unramified in~$K$).
We now recall how this notion of modularity is related to the
existence of automorphic representations for $\GL_2/K$
giving rise to $\rho$. We start by establishing some conventions. 

When associating Galois representations to automorphic representations
we shall follow Carayol's conventions in~\cite{carayol:ens}. In
particular our normalisations of local and global class field theory
will send geometric Frobenius elements to uniformisers and our local-global
compatibility will be Hecke's rather than Langlands' (the one
that preserves fields of definition rather than the one that
behaves well under all functorialities; the difference is a dual
and a twist). We summarise Carayol's theorem (in fact we only
need a weaker form which is essentially due to Langlands and Ohta),
and its strengthening by Taylor and Jarvis.

For $k\geq2$ and $w$ integers of the same parity let $D_{k,w}$
denote the discrete series representation $D_{k,w}$ of $\GL_2(\R)$
with central character $t\mapsto t^{-w}$ defined in section~0.2
of~\cite{carayol:ens}. For $k=1$ and $w$ an odd integer we define
$D_{1,w}$ to be the irreducible principal series $\Ind(\mu,\nu)$
where the induction is unitary induction and
$\mu$ and $\nu$ are the (quasi-)characters of $\R^\times$
defined by $\mu(t)=|t|^{-w/2}\sgn(t)$ and $\nu(t)=|t|^{-w/2}$.
Now for $\vec{k} \in \Z^{S_K}$ with each $k_\tau \ge 1$ and
of the same parity, and $w\in\Z$ of the same parity as the $k_\tau$,
let us say that a cuspidal automorphic representation $\pi$ of $\GL_2(\A_K)$
is \emph{holomorphic of weight $(\vec{k},w)$} (or ``has weight $(\vec{k},w)$''
for short), if $\pi_{\tau}\cong D_{k_\tau,w}$
for each $\tau\in S_K$ (of course many cuspidal automorphic representations
will not have any weight---we are picking out the ones that correspond
to holomorphic Hilbert modular forms). The theorem of Eichler, Shimura,
Deligne, Deligne--Serre, Langlands, Ohta,
Carayol, Taylor, Blasius--Rogawski,
Rogawski--Tunnell and Jarvis associates a Galois representation
$\rho_\pi:G_K\to\GL_2(\Qlbar)$ to a cuspidal automorphic representation
$\pi$ for $\GL_2/K$ of weight $(\vec{k},w)$, and the correspondence
$\pi\mapsto\rho_\pi$ satisfies
Hecke's local-global compatibility at the finite places of~$K$
of characteristic not dividing~$\ell$ for which $\pi$ is unramified.
We remark that local-global compatibility at the ramified places away
from $\ell$ does not quite appear to be known in this generality,
although Carayol and Taylor establish it if $k_\tau\geq2$ for all $\tau$.
See Theorem~7.2 of~\cite{jarvis:wt1} for the
current state of play if $k_\tau=1$ for some $\tau$.

From this compatibility one deduces easily
(see section~3 of~\cite{carayol:ens} for example)
that if $\pi$ is holomorphic of weight $(\vec{k},w)$ then the determinant 
of $\rho_\pi$ is the product of a finite order
character and $\omega^{w-1}$, where $\omega$
denotes the cyclotomic character. Note that twisting by an appropriate
power of the norm character gives bijections between the automorphic
representations of weight $(\vec{k},w)$ and $(\vec{k},w+2n)$ for
any integer $n$; this corresponds to twisting by an appropriate
power of the cyclotomic character on the Galois side.

Let $\overline{\rho}_\pi:G_K\to\GL_2(\Flbar)$ denote the semisimplification of
the reduction of $\rho_\pi$. Our goal now in this section is to relate
two notions of being modular: the first is the notion of
being modular of some weight (in the sense
introduced above), and the second is the
notion of being modular in the sense of being isomorphic
to $\overline{\rho}_\pi$ for some holomorphic $\pi$.

\begin{proposition}\label{prop:classical}

Let $(\vec{k},w)\in\Z_{\geq2}^{S_K}\times\Z$ be integers all of the
same parity. Suppose $\rho:G_K\to\GL_2(\ol{\F}_\ell)$ is continuous,
irreducible and totally odd. Then $\rho\sim\ol{\rho}_\pi$
for some cuspidal automorphic representation $\pi$
for $\GL_2/K$ of weight $(\vec{k},w)$ and level prime to $\ell$
if and only if $\rho$ is modular of weight $V$ for some Jordan-H\"older
constituent $V$ of
$$V_{\vec{k},w}:=\bigotimes_{\gp|\ell} \bigotimes_{\tau\in S_\gp}  
\det{}^{(w-k_\tau)/2}\mathrm{Symm}^{k_\tau-2}k_\gp^2\otimes_\tau\ol{\F}_\ell.$$

\end{proposition}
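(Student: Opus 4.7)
The strategy is to reduce both sides to the same assertion about the cohomology of Shimura curves and then to pass between $\ell$-adic and mod $\ell$ coefficients. By Lemma~\ref{kevinscontribution}(b), modularity of weight $V$ for some Jordan-H\"older constituent $V$ of $V_{\vec{k},w}$ is equivalent to modularity of weight $V_{\vec{k},w}$ itself; and by Lemma~\ref{kevinscontribution}(a) the latter is equivalent to $\rho$ being a $G_K$-subquotient of $H^1(Y_{U,\Kbar},\CF_{V_{\vec{k},w}(1)})$ for some admissible $D$ and sufficiently small $U$ (which is automatically of level prime to $\ell$). A direct computation of the twist $V(1)=V\otimes\det^{\vec{1}}$ shows that $V_{\vec{k},w}(1)$ has $(\det,\Symm)$-exponents $\det^{(w-k_\tau+2)/2}\Symm^{k_\tau-2}$ at each $\tau$, precisely matching the mod $\ell$ reduction of Carayol's $\ell$-adic coefficient system associated to $(\vec{k},w)$. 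Hence $\CF_{V_{\vec{k},w}(1)}\cong \CF^0_{\lambda,U}/\lambda \otimes \Flbar$, and the problem reduces to comparing the existence of $\pi$ with $\rho$ appearing as a $G_K$-subquotient of $H^1(Y_{U,\Kbar},\CF^0_{\lambda,U}/\lambda)\otimes\Flbar$.

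For the ``only if'' direction, starting from a cuspidal $\pi$ of weight $(\vec{k},w)$ and level prime to $\ell$, we choose a quaternion algebra $D/K$ split at $\tau_0$ and at every prime above $\ell$, ramified at all other infinite places, and (when $[K:\Q]$ is even) also ramified at a single finite place away from $\ell$ at which $\pi$ is discrete series; Jacquet--Langlands then transfers $\pi$ to an automorphic representation $\pi^D$ on $D^\times$. Pick $U$ sufficiently small of level prime to $\ell$ with $(\pi^{D,f})^U\neq 0$. By Carayol's construction of $\rho_\pi$, this Galois representation occurs as a $G_K$-direct summand of $H^1_p(Y_{U,\Kbar},\CF_{\lambda,U})\otimes\Qlbar$; taking a $G_K$-stable $\CO_{E_\lambda}$-lattice and reducing mod $\lambda$, $\ol{\rho}_\pi\sim\rho$ appears as a Jordan-H\"older constituent of the semisimplification of $H^1(Y_{U,\Kbar},\CF^0_{\lambda,U}/\lambda)\otimes\Flbar$, giving modularity of weight $V_{\vec{k},w}$.

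For the ``if'' direction, suppose $\rho$ is a $G_K$-subquotient of $H^1(Y_{U,\Kbar},\CF^0_{\lambda,U}/\lambda)\otimes\Flbar$ for some admissible $D$ and sufficiently small $U$. The long exact sequence associated to $0\to \CF^0_{\lambda,U}\xrightarrow{\lambda}\CF^0_{\lambda,U}\to\CF^0_{\lambda,U}/\lambda\to 0$ together with Lemma~\ref{eisenstein}(a) (excluding $\rho$ from $H^2(Y_{U,\Kbar},\CF^0_{\lambda,U})[\lambda]$, whose $G_K$-action factors through an abelian quotient) forces $\rho$ to occur already in $H^1(Y_{U,\Kbar},\CF^0_{\lambda,U})/\lambda\otimes\Flbar$; Lemma~\ref{eisenstein}(b) then similarly confines it to the cuspidal part $H^1_p(Y_{U,\Kbar},\CF^0_{\lambda,U})/\lambda\otimes\Flbar$. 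A Brauer--Nesbitt argument, invoking the Hecke-equivariant decomposition $H^1_p(Y_{U,\Kbar},\CF_{\lambda,U})\otimes\Qlbar=\bigoplus_\pi \rho_\pi\otimes(\pi^{D,f})^U$ over cuspidal $\pi$ of weight $(\vec{k},w)$ transferring to $D^\times$, identifies the semisimplification of the mod $\lambda$ reduction with $\bigoplus_\pi \ol{\rho}_\pi\otimes(\pi^{D,f})^U$; hence $\rho$ is a Jordan-H\"older constituent of $\ol{\rho}_\pi$ for some such $\pi$, and dimension counting (both are two-dimensional and $\rho$ is irreducible) yields $\rho\sim\ol{\rho}_\pi$. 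Finally, $(\pi^{D,f})^U\neq 0$ and $U$ being of level prime to $\ell$ force $\pi$ to be unramified above $\ell$.

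The main obstacle will be the Brauer--Nesbitt step in the ``if'' direction: one has to verify carefully that a $G_K$-subquotient of the mod $\lambda$ reduction of the lattice $H^1_p(Y_{U,\Kbar},\CF^0_{\lambda,U})$ genuinely comes from a characteristic-zero Hecke eigensystem, by tracking the Hecke action compatibly through the long exact sequence and the integral-lattice reduction. A secondary technical subtlety is the choice of $D$ when $[K:\Q]$ is even: the ``only if'' direction requires $\pi$ to have a local discrete series component at some finite prime outside $\ell$ to which one can transfer $D$'s finite ramification, which is harmless as soon as $\pi$'s conductor is nontrivial but requires an auxiliary argument (e.g.\ via level raising at some auxiliary prime) in the edge case of everywhere unramified $\pi$.
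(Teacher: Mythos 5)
Your proof follows essentially the same strategy as the paper's: reduce via Lemma~\ref{kevinscontribution} to cohomology of $Y_U$ with coefficients in the mod~$\ell$ reduction of Carayol's $\ell$-adic sheaf (noting $V_{\vec{k},w}(1)$ matches $\CF^0_{\lambda,U}/\lambda\otimes\Flbar$), invoke Lemma~\ref{eisenstein} to control Eisenstein contributions, use Jacquet--Langlands and Carayol's decomposition of $H^1_p$ into the representations $\rho_\pi$, and appeal to Taylor's level raising for the direction where one must realise $\ol{\rho}_\pi$ in the cohomology of a Shimura curve. You present the two implications in the opposite order and spell out the ``standard cohomological argument'' (long exact sequence from multiplication by $\lambda$, torsion control via $H^2[\lambda]$ and Lemma~\ref{eisenstein}(a), passage from $H^1$ to $H^1_p$ via Lemma~\ref{eisenstein}(b), Brauer--Nesbitt over the Hecke-equivariant decomposition) in more detail than the paper does, which merely gestures at it.

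One inaccuracy worth flagging: in the direction $\rho\sim\ol{\rho}_\pi\Rightarrow$ modularity, when $[K:\Q]$ is even you propose to ramify $D$ at a finite place where $\pi$ is discrete series and remark that this is harmless once $\pi$'s conductor is nontrivial, with the edge case being ``everywhere unramified $\pi$.'' That characterisation of the edge case is too narrow: a cuspidal $\pi$ can have nontrivial conductor yet be (ramified) principal series at every finite place, in which case it still has no local discrete series component and no Jacquet--Langlands transfer to a quaternion algebra ramified at a finite place. The correct edge case, as the paper states, is ``$\pi$ principal series at all finite places.'' The paper avoids the case distinction altogether by always invoking Taylor's Theorem~1 of~\cite{rlt:inv} to produce $\pi'$ with $\ol{\rho}_{\pi'}\cong\ol{\rho}_\pi$ which is special at some auxiliary prime, regardless of the local structure of $\pi$ itself.
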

\begin{remark} Note that the representation $V_{\vec{k},w}$ above
differs from the representation $\psi$ in section~2.1.1 of~\cite{carayol:ens}
by a twist due to the fact that we are using Jacobians
rather than \'etale cohomology.
\end{remark}
\begin{proof}
Say $V$ is a Jordan-H\"older constituent of $V_{\vec{k},w}$,
and that $\rho$ is modular of weight~$V$. Then, by definition,
there is a quaternion algebra $D/K$ satisfying the usual conditions,
and a level structure $U$ prime to $\ell$
such that $\rho$ is a subquotient of $(\Pic^0(X_{U'})[\ell]\otimes V)^G$
(with notation as above). By Lemma~\ref{kevinscontribution}(a) and (b)
and the remarks before Lemma~\ref{eisenstein},
$\rho$ is a $G_K$-subquotient of
$H^1(Y_{U,\Kbar},\CF^0_{\lambda,U}/\lambda)\otimes\Flbar$, where
$\CF^0_{\lambda,U}$ is the sheaf that Carayol associates to $(\vec{k},w)$.
Recall that by results of Jacquet-Langlands and Carayol~\cite{carayol:ens},
$H^1_p(Y_{U,\Kbar},\CF_{\lambda,U})$ is
a direct sum of irreducible 2-dimensional $\ell$-adic representations $\rho_\pi$
for $\pi$ as in the statement of the proposition. So by
Lemma~\ref{eisenstein} and a standard cohomological argument,
we see that $\rho\cong\overline{\rho}_\pi$ for some cuspidal automorphic
representation $\pi$ on $\GL_2/K$.

The reverse implication is not quite so straightforward because
given $\pi$ one needs to see $\overline{\rho}_\pi$ in the cohomology
of a Shimura curve (the problem being that $K$ might have even degree
and $\pi$ might be principal series at all finite places).
However, arguments of Wiles and Taylor show that this problem is surmountable
via ``level-raising.'' Indeed, Theorem~1 of~\cite{rlt:inv} and the
remarks following it show that $\overline{\rho}_\pi\cong\overline{\rho}_{\pi'}$
for $\pi'$ an automorphic representation of $\GL_2/K$ that is
special at a finite place, and the Galois representation associated
to $\pi'$ does indeed show up in the cohomology of a Shimura
curve by the above-mentioned results of Jacquet-Langlands and Carayol.
\end{proof}

\begin{corollary} If $\rho:G_K\to\GL_2(\Flbar)$ is continuous,
totally odd, irreducible, and $\rho\cong\overline{\rho}_\pi$
for some automorphic representation $\pi$ of $\GL_2/K$
of level prime to $\ell$ and weight
$(\vec{k},w)\in\Z_{\geq1}^{S_K}\times\Z$, then $\rho$ is
modular of weight~$V$ for some weight~$V$.
\end{corollary}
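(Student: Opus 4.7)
The plan is to reduce immediately to Proposition~\ref{prop:classical}, whose statement requires $\vec{k} \in \Z_{\geq 2}^{S_K}$ rather than $\vec{k} \in \Z_{\geq 1}^{S_K}$. If in fact $k_\tau \ge 2$ for every $\tau$, then $V_{\vec{k},w}$ is nonzero and so has at least one Jordan--H\"older constituent $V$; the proposition then gives that $\rho$ is modular of weight $V$, and we are done.

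It therefore suffices to treat the case where $k_\tau = 1$ for some $\tau \in S_K$ (which forces all $k_\tau$ to be odd, by the fixed parity assumption). My strategy is to produce an auxiliary cuspidal automorphic representation $\pi'$ of weight $(\vec{k}',w') \in \Z_{\geq 2}^{S_K}\times\Z$, still of level prime to $\ell$, with $\overline{\rho}_{\pi'} \cong \overline{\rho}_\pi$. Proposition~\ref{prop:classical} applied to $\pi'$ then yields the conclusion, since $V_{\vec{k}',w'}$ will again admit a Jordan--H\"older constituent $V$ for which $\rho$ is modular of weight $V$.

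The existence of such a $\pi'$ is essentially the content of Jarvis's construction in~\cite{jarvis:wt1} of the Galois representation $\rho_\pi$ in the partial weight-one case: that construction is a Deligne--Serre-style pigeonhole argument, in which one approximates the mod-$\ell$ Hecke eigensystem of the weight-one form underlying $\pi$ by eigensystems of forms of higher weight still of level prime to $\ell$ (concretely, by using partial Hasse invariants, which have trivial level, to raise the weights at the places where $k_\tau = 1$). Passing to a component of the resulting mod-$\ell$ Hecke eigenspace and lifting to characteristic zero produces the desired $\pi'$ with all $k'_\tau \geq 2$ and $\overline{\rho}_{\pi'} \cong \overline{\rho}_\pi$. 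The main obstacle in this approach is precisely this step: verifying that the weight-raising really can be performed without introducing ramification at primes above $\ell$, so that Proposition~\ref{prop:classical} applies as stated. Rather than reprove this, my plan is to invoke~\cite{jarvis:wt1} directly; the rest of the argument is then a formal application of the preceding proposition.
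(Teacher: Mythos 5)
Your proposal matches the paper's proof essentially verbatim: reduce to Proposition~\ref{prop:classical} when all $k_\tau\ge2$, and otherwise produce a congruent $\pi'$ of level prime to $\ell$ and all weights $\ge 2$ by invoking~\cite{jarvis:wt1} (the paper cites the Deligne--Serre lemma together with Lemma~5.2 of that reference, describing the weight-raising as multiplication by a modular form congruent to $1$ mod $\ell$, which is what you are calling the partial-Hasse-invariant step). The only cosmetic difference is that the paper keeps the same $w$, writing $(\vec{k}',w)$ for the new weight, whereas you allow a $w'$; this is harmless since one can twist to adjust $w$.
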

\begin{proof} If $k_\tau\geq2$ for all $\tau$ then this follows
immediately from the previous proposition. If $k_\tau=1$ for one
or more $\tau$ then it suffices to construct an automorphic
representation $\pi'$ of level prime to $\ell$
with $\overline{\rho}_{\pi}\cong\overline{\rho}_{\pi'},$
and with $\pi'$ of weight $(\vec{k}',w)$ with $k'_\tau\geq2$ for all $\tau$.
This is done in~\cite{jarvis:wt1} (via multiplication by an appropriate
modular form congruent to~1 mod~$\ell$: in particular the result
follows from the Deligne--Serre lemma and Lemma~5.2 of {\it loc.\ cit.}).
\end{proof}

We can furthermore predict the local behaviour at primes over $\ell$
of the automorphic representations of weight $(\vec{k},w)$ giving rise
to $\rho$. Before we start on this, here is a simple lemma that will
be of use to us later.

\begin{lemma}\label{lifting} If $k$ is a finite field of characteristic
$\ell$ and if $V$ is an irreducible $\Flbar$-representation of $\GL_2(k)$
then there is an irreducible $\Qlbar$-representation of $\GL_2(k)$
whose reduction has $V$ as a Jordan-H\"older factor. Furthermore
there is a 1-dimensional $\Flbar$-representation $\chi$ of $\GL_2(k)$
and an irreducible $\Qlbar$-representation of $\GL_2(k)$ with
a fixed vector for the subgroup $\smallmat{*}{*}{0}{1}$ whose
reduction has $\chi\otimes V$ as a Jordan-H\"older factor.
\end{lemma}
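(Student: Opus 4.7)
My plan is to prove the second assertion first and derive the first from it: if $X$ is irreducible over $\Qlbar$ with $\chi\otimes V$ a Jordan-H\"older factor of its reduction, then $X\otimes\tilde\chi^{-1}$ is irreducible with $V$ as a Jordan-H\"older factor of its reduction, where $\tilde\chi$ is the Teichm\"uller lift of~$\chi$ (which exists since $\chi$ factors through $\det$ as a character of the prime-to-$\ell$ group $k^\times$).

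The key input for the second assertion is the $B$-structure of~$V$, where $B$ is the upper-triangular Borel of $G=\GL_2(k)$ with unipotent radical~$N$. Since $V=V_{\vec a,\vec b}$ is irreducible, its subspace $V^N$ of $N$-invariants is one-dimensional, $B$-stable, and the diagonal torus $T$ acts on it by a character $\psi=\psi_1\boxtimes\psi_2$ with $\psi_1(a)=\prod_\tau\tau(a)^{a_\tau+b_\tau-1}$ and $\psi_2(d)=\prod_\tau\tau(d)^{a_\tau}$. I would take $\chi$ to be the character $\psi_1^{-1}\circ\det$ of~$G$, so that $V\otimes\chi$ has highest weight character $(1,\psi_2\psi_1^{-1})$.

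In the generic case $\psi_1\ne\psi_2$, I lift $\psi_2\psi_1^{-1}$ to characteristic zero by Teichm\"uller and form the principal series $X=\Ind_B^G(\tilde 1,\widetilde{\psi_2\psi_1^{-1}})$; this is irreducible over $\Qlbar$ of dimension $q+1$ (the two inducing characters being distinct), and a standard Jacquet module computation shows its $\smallmat{*}{*}{0}{1}$-fixed subspace is one-dimensional (contributed by the trivial first component of the inducing character). The mod-$\ell$ reduction of~$X$ equals the parabolic induction $\Ind_B^G(1,\psi_2\psi_1^{-1})$ over~$\Flbar$, and Frobenius reciprocity applied to the $B$-inclusion of the highest-weight character into $V\otimes\chi$ yields a nonzero --- hence surjective --- $G$-map from this reduction onto $V\otimes\chi$. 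Thus $V\otimes\chi$ is a Jordan-H\"older factor of the reduction of~$X$.

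The main obstacle is the degenerate case $\psi_1=\psi_2$, where the explicit formulas force either all $b_\tau=1$ or all $b_\tau=\ell$. In the first subcase $V$ is already one-dimensional, $V\otimes\chi$ is trivial, and I take $X$ to be the trivial representation. In the second, $V\otimes\chi=V_{\vec 0,\vec\ell}=\bigotimes_\tau\Symm^{\ell-1}k^2\otimes_\tau\Flbar$, and I take $X=\mathrm{St}$, the Steinberg representation of $\GL_2(k)$: it is irreducible of dimension~$q$, and its $\smallmat{*}{*}{0}{1}$-fixed subspace is one-dimensional (inherited from the $(q{+}1)$-dimensional $\Ind_B^G 1$ modulo its trivial summand). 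The identification $\overline{\mathrm{St}}=V_{\vec 0,\vec\ell}$ then follows by reducing the split decomposition $\Ind_B^G 1=1\oplus\mathrm{St}$ modulo~$\ell$ (the splitting coming from the ``averaging'' map over $\P^1(k)$, valid since $q+1$ is a unit in $\overline\Z_\ell$) and invoking the fact that $V_{\vec 0,\vec\ell}$ is the unique $q$-dimensional irreducible $\Flbar$-representation of~$G$.
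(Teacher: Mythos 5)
Your proof is correct, and it follows the same three-way case division as the paper's (one-dimensional $V$, the $q$-dimensional ``Steinberg'' weight with all $b_\tau=\ell$, and the generic remainder). The only real difference is that the paper dispatches the generic case by citing Proposition~1.1 of \cite{fred:durham} (with $J=S$), whereas you reconstruct what that result gives in this situation: observing that $V^N$ is a line on which $T$ acts by the highest weight, twisting so that the first component of that weight is trivial, lifting the second by Teichm\"uller, and inducing from $B$; irreducibility of the resulting principal series and the one-dimensionality of the $\smallmat{*}{*}{0}{1}$-invariants then come from the Mackey computation you sketch, and Frobenius reciprocity applied to the highest-weight line gives the surjection onto $\chi\otimes V$ after reduction. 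Your analysis of when $\psi_1=\psi_2$ (forcing $\sum_i(b_i-1)\ell^i\equiv 0\bmod q-1$, hence all $b_i=1$ or all $b_i=\ell$) is the right way to see that the degenerate case is exactly the union of the paper's two special cases, and the derivation of the first assertion from the second by untwisting by $\tilde\chi^{-1}$ is fine since $\chi$ factors through the prime-to-$\ell$ group $k^\times$ via $\det$. The net effect is a self-contained version of the paper's argument, at the cost of a few lines of Mackey theory that the citation otherwise absorbs.
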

\begin{proof}
For 1-dimensional $V$ the result is clear (use the Teichm\"uller lift)
and for $V$ of dimension equal to the size of $k$, the Steinberg representation
does the job. For other $V$ the lemma follows immediately
from Proposition~1.1 of~\cite{fred:durham}
(with $J=S$ in the notation of that paper).
\end{proof}

We now introduce the following rather naive version of a type.
If $L$ is a finite extension of $\Q_\ell$, with integers $\CO_L$,
if $\pi$ is a smooth irreducible complex representation of $\GL_2(L)$
and if $\sigma$ is a smooth irreducible representation
of $\GL_2(\CO_L)$ (so $\sigma$ is finite-dimensional and its kernel
contains an open subgroup of $\GL_2(\CO_L)$), then we say that
$\pi$ is \emph{of type $\sigma$}
if the restriction of $\pi$ to a representation of $\GL_2(\CO_L)$
contains a subspace isomorphic to $\sigma$ (note that this is a much
weaker and simpler version of the usual notion of a type).

We now need a mild refinement of a level-raising result of Richard Taylor;
unfortunately this refinement does not appear to be in the literature,
so we sketch a proof here. The reader who wants to follow the details
is advised to have a copy of Taylor's paper~\cite{rlt:inv} handy.

\begin{lemma}\label{taylor} Suppose that $[K:\Q]$ is
even and $\pi$ is a cuspidal
automorphic representation of $\GL_2/K$ which has weight $(\vec{k},w)$.
Suppose moreover that $k_\tau\geq2$ for all infinite places $\tau$.
For all primes $\gp\mid\ell$ choose a smooth irreducible
representation $\sigma_\gp$ of $\GL_2(\CO_{K_\gp})$ such that
$\pi_\gp$ is of type $\sigma_\gp$.

Then there is a prime $\gq$ of $K$ not dividing $\ell$ and
a cuspidal automorphic representation $\pi'$ of $\GL_2/K$, also
of weight $(\vec{k},w)$, and such that
\begin{itemize}
\item $\ol{\rho}_\pi \sim \ol{\rho}_{\pi'}$;
\item $\pi'_\gq$ is an unramified special representation;
\item $\pi'_\gp$ is of type $\sigma_\gp$ each $\gp\mid\ell$.
\end{itemize}
\end{lemma}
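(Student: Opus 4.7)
The strategy is to work on a definite quaternion algebra via Jacquet--Langlands and adapt Taylor's level-raising argument from \cite{rlt:inv}, fixing throughout the isotypic component of type $\sigma := \bigotimes_{\gp \mid \ell} \sigma_\gp$ at primes above~$\ell$.

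Since $[K:\Q]$ is even, one can choose a quaternion algebra $D/K$ ramified at exactly the infinite places; by Jacquet--Langlands, $\pi$ transfers to an automorphic representation $\pi^D$ of $(D\otimes_K\A_K)^\times$ contributing to a finite-dimensional space of algebraic automorphic forms. Fix a compact open $U = U_\ell U^\ell \subset (D\otimes\A_K^f)^\times$ with $U_\ell = \GL_2(\CO\otimes\Z_\ell)$, small enough that $(\pi^D_f)^{U^\ell}\ne 0$; let $\gn$ denote the prime-to-$\ell$ conductor of $\pi^D$. Write $S(U^\ell;\sigma)$ for the finite-dimensional space of algebraic automorphic forms on $D^\times$ of weight $(\vec{k},w)$, prime-to-$\ell$ level $U^\ell$, and of type $\sigma$ at~$\ell$; the hypothesis that $\pi_\gp$ is of type $\sigma_\gp$ ensures that $\pi^D$ contributes a Hecke eigenform $f \in S(U^\ell;\sigma)$ whose associated Galois representation is $\rho_\pi$.

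Now I would follow Taylor~\cite{rlt:inv} essentially verbatim. First, use Chebotarev to find an auxiliary prime $\gq \nmid \ell\gn$, unramified for $\ol{\rho}_\pi$, with $N\gq \not\equiv 1 \pmod{\ell}$ and eigenvalues of $\ol{\rho}_\pi(\frob_\gq)$ of the form $\{\alpha, \alpha N\gq\}$ --- the standard Ribet-style level-raising condition. Then, after localizing at the maximal ideal $\gm$ of the prime-to-$\ell\gq\gn$ Hecke algebra corresponding to $\ol{\rho}_\pi$, compare $S(U^\ell;\sigma)_\gm$ with the $\gq$-new subspace of $S(U^\ell \cap U_0(\gq);\sigma)_\gm$. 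Taylor's Ihara-type injectivity lemma combined with the dimension count in \cite{rlt:inv} then yields an eigenform $f'$ in the latter module with the same residual Hecke eigenvalues as $f$, and on which $\GL_2(\CO_{K_\gq})$ acts through an unramified special representation. Jacquet--Langlands transfers the associated $\pi'^D$ back to the desired $\pi'$ on $\GL_2/K$, which has the same weight $(\vec{k},w)$ as $\pi$ (the archimedean components being untouched), is unramified special at $\gq$, and retains type $\sigma_\gp$ at each $\gp \mid \ell$.

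The main point --- and the reason for including a sketch rather than merely citing Taylor --- is to verify that his machinery applies to the $\sigma$-isotypic subspace rather than to the full space of forms. This is essentially bookkeeping: every operator involved (Hecke operators at places outside $\ell\gn\gq$, the two degeneracy maps at $\gq$, and the Ihara-type map comparing the two levels) commutes with the action of $U_\ell$, so each step of Taylor's argument restricts to the $\sigma$-isotypic part without modification. In short, since the entire level-raising argument takes place away from~$\ell$, the type at~$\ell$ is preserved automatically, and the expected obstacle is not a real one.
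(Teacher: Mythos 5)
Your overall approach is exactly the paper's: transfer to the totally definite quaternion algebra $D$ of discriminant $1$ (possible since $[K:\Q]$ is even), re-run Taylor's level-raising argument from \cite{rlt:inv}, and track the $\sigma$-isotypic component at primes above $\ell$ throughout. The endgame via Chebotarev and Jacquet--Langlands is also as in the paper.

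The place where your write-up undersells the difficulty is the final paragraph. You assert that because all the relevant maps and Hecke operators commute with the $U_\ell$-action, each step of Taylor's argument ``restricts to the $\sigma$-isotypic part without modification,'' and hence ``the expected obstacle is not a real one.'' That is too quick. Taylor's proof is not a purely representation-theoretic comparison of complex vector spaces: it works with integral lattices $S^D_{(\vec{k},w)}(U;R)$, a duality pairing, and explicit bounds on denominators (his Lemma~3). The projection $\C[G_\ell]\to\C$ cutting out the $\sigma$-isotypic piece does not have integral coefficients, so the $\sigma$-isotypic decomposition of the complex space is \emph{not} induced by a decomposition of the lattice. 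The paper therefore replaces the idempotent projector by the operator $e = N(U)\cdot(\text{projection})$, notes $e^2 = N(U)e$, arranges that $N(U) = N(U\cap U_0(\gq))$, and carries the resulting constants through the analogue of Taylor's Lemma~3. Moreover, the pairing Taylor uses must be replaced by one between the $(\sigma,\psi)$-eigenspace and a twisted $(\sigma^*,\psi^*)$-eigenspace, because the $\SL_2$-invariant pairing on the coefficient module is not $\GL_2$-invariant; and on top of this the paper points out that Taylor's implicit claim of an $\SL_2(R)$-invariant perfect pairing on $\Symm^a(R^2)$ fails when $R$ is not a $\Q$-algebra, requiring an additional correction constant $C'$. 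So while the conceptual thrust of your argument is right, the statement that the adaptation is mere bookkeeping is not — the whole reason the paper includes a proof sketch rather than a bare citation of \cite{rlt:inv} is precisely to lay out these integrality and duality modifications.
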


\begin{proof} (sketch). The main idea of the argument is contained
in the proof of Theorem~1 of~\cite{rlt:inv}, but it would be a little
disingenuous to cite this result without further comment because,
as written, the proof does not keep track of types. It is not difficult
to change the argument so that it does, however. We indicate what needs
to be changed in order to prove the result we want. Because of our assumptions
about the weight of $\pi$ and the degree of $K$, the lemma
can be deduced from a purely combinatorial statement about automorphic forms
for the group of units of the totally definite quaternion algebra~$D$ over $K$
of discriminant~1. We need to check that the system of eigenvalues
associated to $\pi_D$ (the transfer of $\pi$ to $D^\times$)
is congruent to the system of eigenvalues associated
to an automorphic representation $\pi'_D$ which is Steinberg at some
place $\gq$ (this much is done in~\cite{rlt:inv}) and furthermore
such that $\pi'_{D,\gp}$ has type $\sigma_\gp$ at all places above $\ell$.
We do this by mimicking Taylor's argument with the following changes.
Instead of working at level $\Gamma_1(n)$ as in~\cite{rlt:inv},
we work with a more general compact open level structure $U$, assumed for
simplicity to be a product of local factors $U_\gp\subset\GL_2(K_\gp)$,
for $\gp$ running over the finite places of $K$. For $\gp\mid\ell$
we further assume that $U_\gp$ is a normal subgroup of $\GL_2(\CO_\gp)$,
with $\CO_\gp$ the integers in $K_\gp$ (all this can be achieved
by shrinking $U$ if necessary). We define $G_\ell$ to
be the finite group $\prod_{\gp\mid\ell}(\GL_2(\CO_\gp)/U_\gp)$;
the group $G_\ell$ then acts on the space of automorphic forms of level $U$.
Let $\sigma=\otimes_{\gp\mid\ell}\sigma_\gp$, so $\sigma$ is a
finite-dimensional
smooth complex irreducible representation
of $G_\ell$, and fix a number field $N$ which contains $F$, splits $D$,
contains the trace of $\sigma(g)$ for all $g\in G_\ell$ and furthermore
contains the values of the (algebraic) central character $\psi$ of $\pi_D$.
As in Taylor's paper we define $S_{(\vec{k},w)}^D(U)=S_{(\vec{k},w)}^D(U;\C)$,
the finite-dimensional complex vector space of weight~$(\vec{k},w)$ automorphic
forms of level $U$ for $D^\times$. Note
that our $\vec{k}$ is Taylor's $k$ and our $w$ is his $\mu$, and
that strictly speaking Taylor only
considers the case $w=\max\{k_\tau-2\}$, but his arguments never assume
this. We define
$S_{(\vec{k},w)}^D(U)_\psi$
to be the subspace of $S_{(\vec{k},w)}^D(U)$ where the centre~$Z$ of
$D^\times(\A_K)$
acts via the character~$\psi$. Note that for a fixed $U$ and $(\vec{k},w)$
there are only finitely many characters $\psi$ for which $S_{(\vec{k},w)}^D(U)_\psi$
is non-zero by finite-dimensionality of $S_{(\vec{k},w)}^D(U)$.
Furthermore the infinity type of such a $\psi$
is determined by~$w$; the character $\psi$ is an analogue of the
Dirichlet character associated to a classical modular form. For $R$ a
subring of $\C$ containing the integers of $N$ we define $S_{(\vec{k},w)}^D(U;R)$
as in Taylor's paper, and set
$S_{(\vec{k},w)}^D(U;R)_\psi=S_{(\vec{k},w)}^D(U;R)\cap S_{(\vec{k},w)}^D(U)_\psi$.
These spaces all have
an action of $G_\ell$;
we define $S_{(\vec{k},w)}^D(U)_{\sigma,\psi}$ to be the $\sigma$-eigenspace of
$S_{(\vec{k},w)}^D(U)_\psi$ (that is, the $\C[G_\ell]$-direct summand of $S_{(\vec{k},w)}^D(U)_\psi$
cut out by the idempotent in $\C[G_\ell]$ corresponding to $\sigma$),
and we define $S_{(\vec{k},w)}^D(U;R)_{\sigma,\psi}$ to be
$S_{(\vec{k},w)}^D(U;R)\cap S_{(\vec{k},w)}^D(U)_{\sigma,\psi}$.
Note that the natural projection map
$S_{(\vec{k},w)}^D(U)\to S_{(\vec{k},w)}^D(U)_{\sigma,\psi}$
typically does not induce a map
$S_{(\vec{k},w)}^D(U;R)\to S_{(\vec{k},w)}^D(U;R)_{\sigma,\psi}$
because of denominators, but some positive integer multiple of it will do.
More precisely,
define $N(U)$ to be the product of
the order of $G_\ell$ and the order of the finite group
$K^\times\det(U)\backslash\A_K^\times/(K_\infty)^o$.
The composite of the projection map above and multiplication
by $N(U)$ induces a map
$e:S_{(\vec{k},w)}^D(U;R)\to S_{(\vec{k},w)}^D(U;R)_{\sigma,\psi}$
such that the composite
$S_{(\vec{k},w)}^D(U;R)_{\sigma,\psi}\subseteq S_{(\vec{k},w)}^D(U;R)\to S_{(\vec{k},w)}^D(U;R)_{\sigma,\psi}$
is multiplication by $N(U)$. Note crucially
that $N(U)=N(U\cap U_0(\gq))$, and that $e^2=N(U)e$ as endomorphisms
of $S_{(\vec{k},w)}^D(U;R)$.

Contrary to what Taylor implicitly asserts,
there is in general no $\SL_2(R)$-invariant perfect pairing on $\Symm^a(R^2)$
if $R$ is not a $\Q$-algebra.
However if $R$ is a subring of $\C$ then there is an
$\SL_2(R)$-invariant injection from $\Symm^a(R^2)$ to its $R$-dual, with
cokernel killed by some positive integer $C'$ (which depends on $a$
but not on $R$),
and this induces a pairing on $\Symm^a(R^2)$ which is not perfect but
which will suffice to prove the result we need (some of Taylor's constants
need to be modified by this constant). Taylor uses this pairing to
produce a perfect pairing on $S_{(\vec{k},w)}^D(U;\C)$ and the analogue of this
pairing that we shall need is the induced perfect pairing
between $S_{(\vec{k},w)}^D(U;\C)_{\sigma,\psi}$ and
$S_{(\vec{k},w)}^D(U;\C)_{\sigma^*,\psi^*}$, where $\sigma^*=\sigma(\chi\circ\det)$
and $\psi^*=\psi(\chi\circ\det)$, where $\chi$ is the finite order Hecke
character associated to $\pi_D$ on p.272 of~\cite{rlt:inv} and $\det$
is the reduced norm $D^\times\to\GL_1$. The reason for this twist
is that the $\SL_2$-invariant pairing on the coefficient sheaves
is not $\GL_2$-invariant.

We now run through Taylor's argument on pp.272--276 of {\it loc.\ cit.},
making the following changes.
If $S_k^D(U_1(n);R)$ occurs on the left hand side of a pairing, we replace
it by $S_{(\vec{k},w)}^D(U;R)_{\sigma,\psi}$; if it occurs on the right hand side
then we replace it by $S_{(\vec{k},w)}^D(U;R)_{\sigma^*,\psi^*}$.
We replace $U_1(n;\gq)$ with $U\cap\Gamma_0(\gq)$ (with $\Gamma_0(\gq)$
denoting the usual level structure, namely the matrices which are
upper triangular modulo $\gq$), and replacing the Hecke algebra
$\T_k^D(n)$ in~\cite{rlt:inv} by the sub-$\Z$-algebra
$\T_{(\vec{k},w)}(U)_{\sigma,\psi}$ of
$\End_{\C}(S_{(\vec{k},w)}^D(U)_{\sigma,\psi})$ generated
only by Hecke operators $T_{\gq}$ at the \emph{unramified} primes~$\gq$.
These Hecke algebras are as big as we shall need for
our application---we do not need to consider the operators $S_{\gq}$ as we
have fixed a central character, and we also do not need to consider Hecke
operators at the ramified places.

The analogues of the assertions about direct sums in Lemma~1 of~\cite{rlt:inv}
are still true on the $(\sigma,\psi)$-component of $S_{(\vec{k},w)}^D(U)$.
Taylor's map $i$ commutes with the action
of $G_\ell$ and with the action of $Z$, 
and the analogue of Taylor's Lemma~2 holds (indeed the proof given remains
valid when $S_k^D(n)$ is replaced by $S_{(\vec{k},w)}^D(U)_{\sigma,\psi}$ etc.,
as the map $i^\dag$ also commutes with the $G_\ell$ and $Z$-action).
The analogue of Lemma~3 that we need is that for a fixed
compact open subgroup $X\subset\GL_1(\A_K^f)$ there are positive integer
constants $C_1$
and $C_2$ such that for any compact open $U$ with $\det(U)=X$
we have
$$C_1\langle S_{(\vec{k},w)}^D(U;R)_{\sigma,\psi},
S_{(\vec{k},w)}^D(U;R)_{\sigma^*,\psi^*}\rangle\subseteq R$$
and for $f\in S_{(\vec{k},w)}^D(U;\C)_{\sigma,\psi}$ with
$$\langle f,S_{(\vec{k},w)}^D(U;R)_{\sigma^*,\psi^*}\rangle\subseteq R$$
we have $C_2 f\in S_{(\vec{k},w)}^D(u;R)_{\sigma,\psi}$.
The statement about $C_1$ follows from Taylor's Lemma~3, and that
about $C_2$ can also be deduced from Taylor's result, the comments
about $C'$ above, the fact
that the pairing $\langle,\rangle$ restricts to the zero pairing
between the $(\sigma,\psi)$-eigenspace and the $(\sigma',\psi')$-eigenspace
if $(\sigma',\psi')\not=(\sigma^*,\psi^*)$ (using the argument on the bottom
of p.272 of~\cite{rlt:inv}) and the existence of the ``projector'' $e$
above. 

We need the $(\sigma^*,\psi^*)$-analogue of Lemma~4 of~\cite{rlt:inv}
and this is true---indeed it can be deduced from Lemma~4 of~\cite{rlt:inv}
by restricting to the $(\sigma^*,\psi^*)$-eigenspace.

We can now prove the analogue of Theorem~1 of~\cite{rlt:inv} (where
we replace Taylor's Hecke algebras with ours as indicated above); 
we simply mimic Taylor's beautiful proof on p.276 of {\it loc.\ cit.}; the
assiduous reader can check that we have explained the analogues of
all the ingredients that we need. Now using a standard
Cebotarev argument we deduce that given $\pi_D$
of weight $(\vec{k},w)$, we can find a prime $\gq\nmid\ell$ of $K$ at which
$\pi_D$ is unramified principal series and such that
the associated system of eigenvalues
of $\T_{(\vec{k},w)}(U)_{\sigma,\psi}$ is congruent (modulo some prime above $\ell$)
to a system of eigenvalues occurring in
$\T_{(\vec{k},w)}(U_0(\gq))^{\new}_{\sigma,\psi}$.
\end{proof}

Using this beefed-up version of Taylor's level-raising theorem, we
can deduce a beefed-up version of Proposition~\ref{prop:classical}.
Suppose that $\vec{k} \in \Z^{S_K}$ and $w\in\Z$ with 
$k_\tau \ge 2$ and of the same parity as $w$ for all $\tau$.
For each $\gp|\ell$, suppose that $\sigma_\gp$ is a smooth irreducible
representation of $\GL_2(\CO_{K,\gp})$. Via our fixed embeddings
$\Qbar\to\C$ and $\Qbar\to\Qlbar$, and our identification of the residue
field of $\Qlbar$ with $\Flbar$, we can unambiguously define
the semisimplification $\overline{\sigma}_\gp$ of the mod $\ell$
reduction of $\sigma_\gp$; so $\overline{\sigma}_\gp$ is a representation
of $\GL_2(\CO_{K,\gp})$ on a finite-dimensional $\Flbar$-vector space.
Define $\sigma:=\otimes_{\gp\mid\ell}\sigma_\gp$ and 
$\overline{\sigma}:=\otimes_{\gp\mid\ell}\overline{\sigma}_{\gp}$.
Finally let $G_\ell$ denote a finite quotient of $\GL_2(\CO_K\otimes\Z_\ell)$
through which $\sigma$ factors.

\begin{proposition} \label{prop:types}
For an irreducible representation $\rho:G_K\to\GL_2(\Flbar)$, the
following are equivalent:
\begin{itemize}
\item $\rho \sim \ol{\rho}_\pi$
for some cuspidal holomorphic weight $(\vec{k},w)$
automorphic representation $\pi$ of $\GL_2(\A_K)$
such that $\pi_\gp$ has type $\sigma_\gp$ for each $\gp|\ell$.
\item $\rho$ is modular of weight $V$ for some Jordan-H\"older constituent $V$ of
$$W:=\overline{\sigma}^\vee\otimes \bigotimes_{\gp\mid\ell}\bigotimes_{\tau\in S_\gp}\left(
\det{}^{(w-k_\tau)/2}\mathrm{Symm}^{k_\tau-2}k_\gp^2\otimes_\tau\ol{\F}_\ell\right).$$
\end{itemize}
\end{proposition}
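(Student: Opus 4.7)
The plan is to mimic the proof of Proposition~\ref{prop:classical}, inserting $\sigma$ throughout to track the type condition at primes above~$\ell$. First I would extend the framework of Section~\ref{sec:setup}: choose a normal open compact $U''\subset U$ with $U/U''=G_\ell$ (chosen so that $Y_{U''}\to Y_U$ is \'etale Galois with group $G_\ell$), and note that any $\Flbar[G_\ell]$-module gives rise to an \'etale sheaf on $Y_U$, for which the obvious analogues of Lemmas~\ref{eisenstein}, \ref{fctk} and \ref{kevinscontribution} hold with identical proofs. Since the kernel of the natural map $G_\ell\twoheadrightarrow G=\GL_2(\CO/\ell\CO)$ is a pro-$\ell$ group, every irreducible $\Flbar[G_\ell]$-module factors through~$G$; hence the Jordan--H\"older constituents of $W=\ol{\sigma}^\vee\otimes V_{\vec{k},w}$ are bona fide Serre weights, and ``$\rho$ modular of weight~$V$ for some such~$V$'' is equivalent (by the extended Lemma~\ref{kevinscontribution}) to $\rho$ being a $G_K$-subquotient of $H^1(Y_{U,\Kbar},\CF_{W(1)})$ for some sufficiently small~$U$.

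For the ``$\Leftarrow$'' direction, fix a $G_\ell$-stable $\CO_{E_\lambda}$-lattice $\sigma^{\vee,0}\subset\sigma^\vee$ and form the integral sheaf $\CF^0:=\CF_{\sigma^{\vee,0}}\otimes\CF^0_{\lambda,U}$ on $Y_U$; its mod-$\ell$ reduction has the same semisimplification as $\CF_{W(1)}$ up to the twist supplied by Lemma~\ref{fctk}. A standard cohomological lifting argument, combined with Lemma~\ref{eisenstein} to discard abelian contributions, produces a cuspidal automorphic representation $\pi$ of weight $(\vec{k},w)$ with $\ol{\rho}_\pi\cong\rho$ such that $\rho_\pi$ appears in $H^1_p(Y_{U,\Kbar},\CF_{\sigma^\vee}\otimes\CF_{\lambda,U})\otimes\Qlbar$. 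Decomposing this cohomology into $\pi$-isotypic pieces via the results of Jacquet--Langlands and Carayol, the non-vanishing of the $\sigma$-isotypic component of $\pi_\ell$ under $G_\ell$ translates directly into $\Hom_{\GL_2(\CO\otimes\Z_\ell)}(\sigma,\pi_\ell)\neq 0$, which is to say that $\pi_\gp$ has type $\sigma_\gp$ for each $\gp|\ell$.

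For ``$\Rightarrow$'', suppose $\rho\cong\ol{\rho}_\pi$ with $\pi$ of weight $(\vec{k},w)$ and $\pi_\gp$ of type $\sigma_\gp$. If $[K:\Q]$ is odd, one can directly realise $\rho_\pi$ in the cohomology of a Shimura curve attached to a non-split quaternion algebra, as in the proof of Proposition~\ref{prop:classical}. If $[K:\Q]$ is even, one invokes Lemma~\ref{taylor} to replace $\pi$ by a congruent automorphic representation $\pi'$ that is still of type $\sigma_\gp$ at each $\gp|\ell$ and special at some $\gq\nmid\ell$, so that $\rho_{\pi'}$ is realised in such Shimura curve cohomology. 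The type condition then gives a non-zero equivariant embedding $\sigma\hookrightarrow\pi'_\ell$, so $\rho_{\pi'}$ appears in $H^1(Y_{U,\Kbar},\CF^0)\otimes\Qlbar$ with $\CF^0$ as above; reducing modulo~$\lambda$ and applying the extended Lemma~\ref{kevinscontribution} in reverse shows that $\rho$ is a $G_K$-subquotient of $H^1(Y_{U,\Kbar},\CF_{W(1)})$, hence modular of weight $V$ for some Jordan--H\"older constituent $V$ of~$W$.

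The two main obstacles are the following. First, one must carefully choose compatible $\CO_{E_\lambda}$-lattices in $\sigma^\vee$ and in Carayol's $\CF_{\lambda,U}$ so that the Jordan--H\"older constituents of the mod-$\ell$ reduction of $\CF^0$ are correctly identified with those of $\CF_{W(1)}$; this is where the ``pro-$\ell$ kernel'' observation about $G_\ell\to G$ is essential, since $\ol{\sigma}$ is in general only defined up to semisimplification. Second---and this is precisely the reason Lemma~\ref{taylor} was proved above in the strengthened form keeping track of $\sigma$---one must verify that the level-raising genuinely preserves the type at \emph{every} prime above~$\ell$, not merely the weight and the prime-to-$\ell$ level as in \cite{rlt:inv}.
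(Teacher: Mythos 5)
Your proposal follows essentially the same route as the paper's proof: it passes through Lemma~\ref{taylor} to handle the even-degree case, rewrites the cohomology as $H^1(Y_{U,\Kbar},\CF_{\lambda,U}\otimes\CF_{\sigma^\vee})$, picks integral lattices $\CF^0$, compares $\CF^0/\lambda$ with $\CF_{W(1)}$, and invokes Lemma~\ref{eisenstein} to dispose of the torsion/cokernel terms, exactly as in the paper. The argument is correct; your explicit remark that the kernel of $G_\ell\to G$ is pro-$\ell$ (so irreducible $\Flbar[G_\ell]$-modules factor through $G$, making the Jordan--H\"older factors of $W$ genuine Serre weights) is a point the paper compresses into ``which follows immediately from the definitions,'' and your separation of the odd- and even-degree cases in the $\Rightarrow$ direction makes explicit the hypothesis of Lemma~\ref{taylor} that the paper's phrasing glosses over.
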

\begin{proof} 

Lemma~\ref{taylor} shows that $\rho\sim\ol{\rho}_\pi$ for some $\pi$
as above if and only if there are $D$ and $U$ as usual, and a compact open subgroup $U''$
of $U$ with $U/U''=G_\ell$, such that 
$\rho$ is the mod $\ell$ reduction of an irreducible
2-dimensional $\Qlbar$-representation
$\tilde{\rho}$ which is a direct summand (equivalently, a subquotient)
of $\Hom_{G_\ell}(\sigma,H^1_p(Y_{U'',\overline{K}},\CF_{\lambda,U''}))$,
with $\CF_{\lambda,U''}$ the $E_\lambda$-sheaf constructed by
Carayol on $X_{U''}$ corresponding to $(\vec{k},w)$ (where here
we assume $U$ is small enough for the projection
$\phi:Y_{U''}\to Y_U$ to be \'etale with covering group $G_\ell$,
and furthermore that our coefficient field $E_\lambda$
is assumed large enough to satisfy the conditions
that Carayol requires of it, and also large enough to afford a model
for $\sigma$ and to ensure that all the irreducible subquotients of
$H^1(Y_{U'',\overline{K}},\CF_{\lambda,U''})$ have dimension either~1 or~2).
By Lemma~\ref{eisenstein}(b) the latter condition on $\rho$ is equivalent to
$\rho$ being isomorphic to the mod $\ell$ reduction of an irreducible
2-dimensional subquotient of
$\Hom_{G_\ell}(\sigma,H^1(Y_{U'',\overline{K}},\CF_{\lambda,U''}))$.

Now let
let $\CF_{\sigma^{\vee}}$
denote the $E_\lambda$-sheaf on $Y_{U}$ associated to the dual of $\sigma$,
and let $\CF_{\lambda,U}$ denote the $E_\lambda$-sheaf constructed by Carayol
on $Y_{U}$ and corresponding to weight $(\vec{k},w)$.
Then
\begin{align*}
\Hom_{G_\ell}(\sigma,H^1(Y_{U'',\overline{K}},\CF_{\lambda,U''}))&=(H^1(Y_{U'',\overline{K}},\CF_{\lambda,U''})\otimes\sigma^{\vee})^{G_\ell}\\
&=H^1(Y_{U'',\overline{K}},\CF_{\lambda,U''}\otimes \phi^*\CF_{\sigma^\vee})^{G_\ell}\\
&=H^1(Y_{U'',\overline{K}},\phi^*(\CF_{\lambda,U}\otimes\CF_{\sigma^\vee}))^{G_\ell}\\
&=H^1(Y_{U,\overline{K}},\CF_{\lambda,U}\otimes\CF_{\sigma^\vee}),
\end{align*}
the last equality coming, for example, from the Hochschild-Serre spectral
sequence and the fact that the order of $G_\ell$ is invertible in the
(characteristic zero) field $E_\lambda$.

Let $\CO_\lambda$ denote the integers in $E_\lambda$, let
$\CF^0_{\lambda,U}$ and $\CF^0_{\sigma^\vee}$ be $\CO_\lambda$-lattices
in $\CF_{\lambda,U}$ and $\CF_{\sigma^\vee}$ and set
$\CF^0:=\CF^0_{\lambda,U}\otimes\CF^0_{\sigma^\vee}$. We then deduce
that an irreducible $\rho$ is the reduction of an irreducible
$\overline{\rho}_\pi$ as above if and only if it is a subquotient
of $H^1(Y_{U,\overline{K}},\CF^0)^{tf}/\lambda$,
where $\lambda$ denotes the maximal ideal of $\CO_\lambda$ and $tf$ denotes
the maximal torsion-free quotient. Now a standard argument shows
that this is so if and only if $\rho$ is a subquotient of $H^1(Y_{U,\overline{K}},\CF^0/\lambda)$
(the torsion in $H^1(Y_{U,\overline{K}},\CF^0)$
is a subquotient of $H^0(Y_{U,\overline{K}},\CF^0/\ell^N)$ for some $N$ and
hence the Galois representations arising as subquotients of
it are all 1-dimensional by Lemma~\ref{eisenstein}(a),
and the cokernel of the injection
$H^1(Y_{U,\overline{K}},\CF^0)/\lambda\to H^1(Y_{U,\overline{K}},\CF^0/\lambda)$ is
contained in $H^2(Y_{U,\overline{K}},\CF^0)[\lambda]$ and hence in $H^2(Y_{U,\overline{K}},\CF^0/\lambda^N)$
for some $N$, and the irreducible subquotients of this group are also
all 1-dimensional by Lemma~\ref{eisenstein}(a)). To finish the proof
it suffices by Lemma~\ref{kevinscontribution}
to check that $\CF^0/\lambda\otimes\Flbar$ and $W(1)$
have the same Jordan-H\"older factors, which follows
immediately from the definitions.
\end{proof}

For $\tau \in S_\gp$, we denote by $\omega_\tau$
the fundamental character of $I_{K_\gp}$ defined by composing $\tau$
with the homomorphism $I_{K_\gp} \to k_\gp^\times$ obtained
from local class field theory (with the convention that uniformizers
correspond to geometric Frobenius elements).
We then have the following compatibility among determinants,
central characters and twists.
\begin{corollary}\label{gl1}
\begin{enumerate} 
\item 
If $\rho$ is modular of weight $V$ and $V$ has central character $\otimes_{\gp|\ell}
\prod_{\tau\in S_\gp} \tau^{c_\tau}$,
then 
$$\det\rho|_{I_{K_\gp}}= \prod_{\tau\in S_\gp}\omega_\tau^{c_\tau+1}$$
for each $\gp|\ell$.
\item 
Let $\chi:G_K \to \ol{\F}_\ell^\times$ be such that
$\chi|_{I_{K_\gp}}= \prod_{\tau\in S_\gp}\omega_\tau^{c_\tau}$
for each $\gp|\ell$. Then $\rho$ is modular of weight $V$ if and only 
$\chi\rho$ is modular of weight $V\otimes V_\chi$, where
$$V_\chi = \bigotimes_{\gp|\ell} \bigotimes_{\tau\in S_\gp} \det{}^{c_\tau}
k_\gp^2\otimes_\tau\ol{\F}_\ell.$$
\end{enumerate}
\end{corollary}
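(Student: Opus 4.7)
Part (2) is essentially a formal consequence of the tools already established. By Lemma~\ref{kevinscontribution}(a), $\rho$ is modular of weight $V$ iff $\rho$ is a $G_K$-subquotient of $H^1(Y_{U,\Kbar},\CF_{V(1)})$ for some sufficiently small $U$ of level prime to $\ell$. Shrinking $U$ if necessary, we may assume $\chi_\A$ is trivial on $\det(U')$ (using that $\chi$ has finite order). The hypothesis $\chi|_{I_{K_\gp}}=\prod_\tau\omega_\tau^{c_\tau}$, together with the paper's normalisations of local class field theory (uniformizers $\leftrightarrow$ geometric Frobenius) and $\omega_\tau$, identifies $\chi_0:=\chi_\A|_{(\CO\otimes\Z_\ell)^\times}$ with $\prod_{\gp,\tau}\tau^{c_\tau}$ as a character of $(\CO/\ell)^\times$. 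Hence $\chi_0\circ\det = V_\chi$ as characters of $G$, and Lemma~\ref{fctk} applied with $\psi=\chi$ gives
\begin{equation*}
H^1(Y_{U,\Kbar},\CF_{V(1)})(\chi)\cong H^1(Y_{U,\Kbar},\CF_{V(1)\otimes V_\chi}) = H^1(Y_{U,\Kbar},\CF_{(V\otimes V_\chi)(1)}).
\end{equation*}
A second application of Lemma~\ref{kevinscontribution}(a) shows that $\rho$ is modular of weight $V$ iff $\chi\rho$ is modular of weight $V\otimes V_\chi$, proving (2).

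For part~(1), the strategy is to lift the weight $V$ to characteristic zero and realise $\rho$ as $\ol\rho_\pi$ for $\pi$ of a specific simple weight, then read off $\det\rho|_{I_{K_\gp}}$ from the known shape of $\det\rho_\pi$. By Lemma~\ref{kevinscontribution}(b) we may assume $V=\bigotimes_{\gp|\ell} V_\gp$ is irreducible. Lemma~\ref{lifting} supplies, for each $\gp|\ell$, an irreducible $\Qlbar$-representation $\sigma_\gp$ of $\GL_2(\CO_{K,\gp})$ (factoring through $\GL_2(k_\gp)$) whose reduction has $V_\gp^\vee$ as a Jordan--H\"older constituent; set $\sigma=\bigotimes_{\gp|\ell}\sigma_\gp$. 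Choosing $\vec{k}=(2,\ldots,2)$ and $w=2$, the representation $V_{\vec{k},w}$ is trivial, so $W=\ol\sigma^\vee$ in the notation of Proposition~\ref{prop:types}, and $V$ is a Jordan--H\"older constituent of $W$. Proposition~\ref{prop:types} then furnishes a cuspidal holomorphic automorphic representation $\pi$ of $\GL_2(\A_K)$, of weight $(\vec{k},w)$ and level prime to $\ell$, with $\pi_\gp$ of type $\sigma_\gp$ for each $\gp|\ell$, such that $\rho\sim\ol\rho_\pi$.

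Since $\pi_\gp$ has type $\sigma_\gp$, the central character of $\pi$ restricted to $\CO_{K,\gp}^\times$ coincides with that of $\sigma_\gp$, whose mod~$\ell$ reduction equals the central character of $V_\gp^\vee$, namely $\prod_\tau\tau^{-c_\tau}$. Local class field theory together with the definition of $\omega_\tau$ (which matches $\omega_\tau$ on $I_{K_\gp}$ with the character $\CO_{K,\gp}^\times\twoheadrightarrow k_\gp^\times\xrightarrow{\tau}\Flbar^\times$) then transports this to the restriction to $I_{K_\gp}$ of the mod~$\ell$ reduction of the finite-order character $\eta$ in $\det\rho_\pi=\eta\cdot\omega^{w-1}$; combined with $\ol{\omega}|_{I_{K_\gp}}=\prod_\tau\omega_\tau$ (valid because $\ell$ is unramified in $K$) and $w=2$, this yields $\det\rho|_{I_{K_\gp}}=\prod_\tau\omega_\tau^{c_\tau+1}$, as required. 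The main obstacle is sign-checking: the exact relationship between the central character of $\pi$ and the finite-order factor $\eta$ under Carayol's normalisation of local Langlands must be tracked carefully (in effect reducing to the classical $K=\Q$, weight~2 case where $\det\rho_f=\epsilon\omega$ matches the Nebentypus) so that the exponent comes out to $c_\tau+1$ rather than $-c_\tau+1$.
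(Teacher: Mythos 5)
Part~(2) is exactly the paper's argument (the paper's own proof simply says it is immediate from Lemma~\ref{fctk} and Lemma~\ref{kevinscontribution}(a)), and your account is correct.

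Part~(1) is essentially the paper's approach as well — reduce to irreducible $V$, lift to an irreducible $\Qlbar$-type $\sigma$ via Lemma~\ref{lifting}, invoke Proposition~\ref{prop:types} to realise $\rho$ as $\ol{\rho}_\pi$ with controlled type at $\ell$, and read off $\det\rho|_{I_{K_\gp}}$ from Carayol's determinant formula. The two variants differ only cosmetically: you take weight $(\vec{2},2)$ with $\sigma$ lifting $V^\vee$, whereas the paper takes weight $(\vec{2},0)$ and arranges for $V$ to be a constituent of the reduction of $\sigma^\vee\otimes(T^{-1}\circ\det)$, where $T$ is the Teichm\"uller lift of the norm.

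The one point you explicitly flag as unresolved — the sign in passing from the central character of $\pi$ to the finite-order factor in $\det\rho_\pi$ — is precisely where the paper supplies a concrete reference: Carayol \S5.6.1 gives $\det\rho_\pi = \chi_\pi^{-1}\omega^{-1}$ (note the inverse), with $\chi_\pi$ the central character and $\omega$ the cyclotomic character. One reason the paper's choice of $w=0$ is cleaner is that $\chi_\pi$ is then a genuinely finite-order Hecke character, so its identification with a $\Flbar^\times$-valued Galois character via the fixed CFT normalisation (geometric Frobenius $\leftrightarrow$ uniformisers) is immediate, and one simply computes $\det\ol\rho|_{I_\gp} = \ol{\alpha}_\gp \cdot \ol{\omega}^{-1}|_{I_\gp} = \ol{\alpha}_\gp N_\gp^{-1}$, where $\alpha_\gp$ is the central character of $\sigma_\gp^\vee$, and then matches central characters. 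With your choice $w=2$, the Hecke character $\chi_\pi$ has non-trivial archimedean component, so the statement ``$\eta$ corresponds to $\chi_\pi^{-1}$'' needs the usual algebraic-Hecke-character bookkeeping before one can restrict to inertia at $\gp$; this is exactly the sign-tracking you warn about, and it does come out right, but you should close it by citing the explicit formula from Carayol rather than leaving it as a caveat. Up to that reference, your argument is sound.
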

\begin{proof}
(1) Let $T$ denote the Teichm\"uller lift of the norm
map $N:(\CO/\ell)^\times\to\Fl^\times$. By Lemma~2.7 there is an irreducible
$\Qlbar$-representation $\sigma$ of $G$ such that the reduction of
$\sigma^\vee\otimes(T^{-1}\circ\det)$ contains $V$ as a Jordan-H\"older factor.
By Proposition~\ref{prop:types} we see that if $\rho$ is modular
of weight~$V$ then $\rho\sim\overline{\rho}_\pi$ for some $\pi$
of weight $(\vec{2},0)$ and type $\sigma$. Now by section 5.6.1
of~\cite{carayol:ens} we see that $\det(\rho_\pi)=\chi_\pi^{-1}\omega^{-1}$
where $\chi_\pi$ is the central character of $\pi$ and $\omega$
is the cyclotomic character.
Now $\chi_\pi$ can be computed on $\CO_{K_\gp}^\times$ because it
is the central character of $\sigma_{\gp}$, which is the inverse
of the central character $\alpha_\gp$ of $\sigma_{\gp}^\vee$. We deduce
that
$\det(\rho)|I_\gp=\alpha_\gp\overline{\omega}^{-1}=\alpha_{\gp}N_\gp^{-1}$,
where $N_\gp=\prod_{\tau\in S_\gp}$ is the map
$I_{K_\gp}\to k_{\gp}^\times\to\F_\ell$. Finally the fact that $V$
is a Jordan-H\"older factor of the reduction of
$\sigma^{\vee}\otimes(T^{-1}\circ\det)$ implies that the central
character of $V$ (considered as a representation of $\GL_2(k_\gp)$)
is $\alpha_\gp.N_\gp^{-2}$ and the result follows.

(2) is simpler and could have been deduced earlier; in fact
it is immediate from Lemma~\ref{fctk} and Lemma~\ref{kevinscontribution}(a).
\end{proof}

Recall that for $K = \Q$, every modular $\rho$ arises from a form
of level prime to $\ell$ and some weight $k \ge 2$. Moreover,
after twisting $\rho$, one may take the weight $k$ to be in the
range $2 \le k \le \ell+1$ (see \cite{as1}). This is in general false
for larger $K$. Indeed if $\rho$ arises from a form of weight $(\vec{k},w)$
and level prime to $\ell$, then we see from the preceding results
that $\det\rho|_{I_\gp} = \omega^{w-1}$
for all $\gp|\ell$, and it is easy to construct representations
none of whose twists have this property: choose for example an
odd prime $\ell$ inert in a real quadratic $K$, and a totally odd
$\rho$ such that $\det\rho|_{I_\ell} = \omega_\tau^a$ for some
odd integer $a$ (where $\tau:\CO_K/\ell \to \Flbar$);
see \cite{ddr} for some explicit examples.
On the other hand,
it is still the case that every modular $\rho$ arises from a form of weight 
$(2,\ldots,2)$ and some level not necessarily prime to $\ell$.
Moreover, after twisting $\rho$, we can assume the form
has level dividing $\gn\ell$ for some $\gn$ prime to $\ell$.

\begin{corollary}\label{cor:equiv}
For an irreducible $\rho:G_K\to\GL_2(\Flbar)$
the following are equivalent:
\begin{enumerate}
\item $\rho \sim \ol{\rho}_\pi$ for some holomorphic cuspidal
automorphic representation $\pi$ of $\GL_2(\A_K)$;
\item $\rho \sim \ol{\rho}_\pi$ for some cuspidal automorphic
representation $\pi$ of $\GL_2(\A_K)$ which is holomorphic
of weight $(\vec{2},0)$;
\item $\chi\rho\sim\ol{\rho}_\pi$ for some character $\chi$ and some
cuspidal automorphic representation $\pi$ of $\GL_2(\A_K)$
which is holomorphic of weight $(\vec{2},0)$ and level $U=U^\ell.U_1(\ell)$ (the
adelic analogue of ``level $\Gamma_1(\ell)$ at $\ell$'');
\item $\rho$ is modular of weight $V$ for some Serre weight $V$.
\end{enumerate}
\end{corollary}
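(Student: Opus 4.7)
The plan is to establish the cyclic chain $(2) \Rightarrow (1) \Rightarrow (4) \Rightarrow (3) \Rightarrow (2)$, of which $(2) \Rightarrow (1)$ is immediate. For $(1) \Rightarrow (4)$, suppose $\rho \sim \ol\rho_\pi$ with $\pi$ holomorphic of weight $(\vec{k}, w)$ and $k_\tau \geq 1$; if some $k_\tau = 1$ we first invoke Jarvis's multiplication-by-a-form-congruent-to-$1$ construction (Lemma~5.2 of~\cite{jarvis:wt1}), as in the corollary to Proposition~\ref{prop:classical}, to replace $\pi$ by a holomorphic $\pi'$ of some weight $(\vec{k}', w)$ with all $k'_\tau \geq 2$ and $\ol\rho_{\pi'} \sim \ol\rho_\pi$. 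Taking $\sigma_\gp$ to be an irreducible $\GL_2(\CO_{K_\gp})$-subrepresentation of $\pi'_\gp$ for each $\gp \mid \ell$ puts us in the hypotheses of Proposition~\ref{prop:types}, which yields directly that $\rho$ is modular of weight $V$ for some Jordan--H\"older constituent of the associated $W$; by Lemma~\ref{kevinscontribution}(b) we may take $V$ irreducible, giving (4).

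For $(4) \Rightarrow (3)$, suppose $\rho$ is modular of Serre weight $V = \bigotimes_{\gp\mid\ell} V_\gp$. Apply the second assertion of Lemma~\ref{lifting} to each $V_\gp^\vee$ to produce a one-dimensional $\Flbar$-character $\chi^0_\gp$ of $\GL_2(k_\gp)$ and an irreducible $\Qlbar$-representation $\sigma_\gp$ with a vector fixed by $\smallmat{*}{*}{0}{1}$, such that $\chi^0_\gp \otimes V_\gp^\vee$ is a Jordan--H\"older constituent of $\ol\sigma_\gp$; set $\sigma := \bigotimes_\gp \sigma_\gp$. Each $\chi^0_\gp$ is of the form $\eta_\gp \circ \det$ for a character $\eta_\gp = \prod_{\tau \in S_\gp}\tau^{c_{\gp,\tau}}$ of $k_\gp^\times$. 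Using global class field theory, choose a Galois character $\chi : G_K \to \Flbar^\times$ whose restriction to each $I_{K_\gp}$ has prescribed fundamental-character exponents, so that via Corollary~\ref{gl1}(2) the twist $V \otimes V_\chi$ appears as a Jordan--H\"older constituent of the module $W = \ol\sigma^\vee \otimes \bigotimes_{\gp\mid\ell}\bigotimes_{\tau \in S_\gp} (\det{}^{-1} \circ \tau)$ arising in Proposition~\ref{prop:types} for weight $(\vec 2, 0)$. Corollary~\ref{gl1}(2) then gives that $\chi\rho$ is modular of weight $V \otimes V_\chi$, and Proposition~\ref{prop:types} produces a cuspidal holomorphic $\pi$ of weight $(\vec 2, 0)$ and type $\sigma_\gp$ at each $\gp \mid \ell$ with $\chi\rho \sim \ol\rho_\pi$; the $\smallmat{*}{*}{0}{1}$-fixed vector in $\sigma_\gp$ produces a $U_1(\ell)$-fixed vector in $\pi_\gp$, so $\pi$ has level of the required form $U^\ell \cdot U_1(\ell)$. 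The main technical obstacle is the bookkeeping here: one must simultaneously match the duality $\ol\sigma \leftrightarrow \ol\sigma^\vee$, the weight-$(\vec 2, 0)$ central twist in $W$, and the twist character $V_\chi$ via a single choice of $\chi$.

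Finally, for $(3) \Rightarrow (2)$: the character $\chi^{-1}:G_K \to \Flbar^\times$ has image of order prime to $\ell$, so its Teichm\"uller lift is a finite-order character $G_K \to \Qlbar^\times$, and by class field theory this corresponds to a finite-order Hecke character $\psi$ of $K$ trivial at infinity. Then $\pi \otimes \psi$ is cuspidal automorphic of weight $(\vec 2, 0)$ with associated Galois representation $\rho_\pi \otimes \rho_\psi$, so $\ol\rho_{\pi \otimes \psi} = (\chi\rho)\cdot\chi^{-1} = \rho$, closing the cycle.
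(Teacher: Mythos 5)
Your proof is essentially the same as the paper's: the cycle $(2)\Rightarrow(1)\Rightarrow(4)\Rightarrow(3)\Rightarrow(2)$ is the same cycle the paper traverses (the paper proves $(3)\Rightarrow(2)\Rightarrow(1)$ as immediate, then $(1)\Rightarrow(4)$ via Proposition~\ref{prop:types} together with the Deligne--Serre/Jarvis step, then $(4)\Rightarrow(3)$ via Lemma~\ref{lifting}, Corollary~\ref{gl1}(2) and Proposition~\ref{prop:types} at weight $(\vec 2,0)$), and your treatment of the two hard links $(1)\Rightarrow(4)$ and $(4)\Rightarrow(3)$ uses exactly the paper's ingredients, just with the twisting bookkeeping written out more explicitly. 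One small inaccuracy in your $(3)\Rightarrow(2)$ step: the finite-order Hecke character $\psi$ corresponding to the Teichm\"uller lift of $\chi^{-1}$ need not be trivial at infinity (its archimedean components are sign characters determined by $\chi$ on complex conjugations); what saves the argument is that $D_{2,0}\otimes(\sgn\circ\det)\cong D_{2,0}$, so $\pi\otimes\psi$ still has weight $(\vec 2,0)$.
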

\begin{proof} It is clear that
(3) $\Rightarrow$ (2) $\Rightarrow$ (1). Proposition~\ref{prop:types}
shows that (1) $\Rightarrow$ (4) if $k_\tau\geq2$ for all $\tau$;
if some of the $k_\tau$ are equal to~1 then one has to first
multiply by an Eisenstein series to increase the weight $\vec{k}$:
more formally one uses the Deligne--Serre lemma and
Lemma~5.2 of~\cite{jarvis:wt1}.
Finally we need to show that (4) $\Rightarrow$ (3). If (4) holds
then by Lemma~\ref{lifting} there is an irreducible
$\Qlbar$-representation $\sigma$ of $G$ with a $\smallmat**01$-fixed
vector such that the reduction of $\sigma^\vee\otimes\bigotimes_{\gp|\ell}
\otimes_{\tau\in S_\gp}\det^{-1}$ contains some twist of $V$.
We deduce (3) from Lemma~\ref{gl1}(2) and the case $(\vec{k},w)=(\vec{2},0)$
of \ref{prop:types}.
\end{proof}

\section{The weight conjecture}
\label{sec:recipe}

Suppose that $\rho:G_K \to \GL_2(\ol{\F}_\ell)$ is continuous,
irreducible and totally odd. 
The aim of this section is to provide a conjectural recipe
for the set of $V$ such that $\rho$ is modular of weight $V$.

For each prime $\gp$ of $K$ dividing $\ell$, we will define a set of representations
$W_{\gp}(\rho)$ of $\GL_2(k_\gp)$ depending only
on $\rho|_{I_{K_\gp}}$, and then define the conjectural weight set $W(\rho)$ 
as the set of Serre weights of the form $\otimes_{\ol{\F}_\ell} V_\gp$
with $V_\gp \in W_{\gp}(\rho)$.

We need some more notation before defining $W_\gp(\rho)$.
With our prime $\gp$ dividing $\ell$ fixed for now, we write simply
$k$, $f$ and $S$ for $k_\gp$, $f_\gp$ and $S_\gp$. Fix an embedding
$\ol{K} \to \ol{K}_\gp$ and identify $D=G_{K_\gp}$ and $I=I_{K_\gp}$
with subgroups of $G_K$. Let $K_\gp'$ be the unramified
quadratic extension of $K_\gp$ in $\ol{K}_\gp$ and let $k'$
denote its residue field. We let $S'$ denote
the set of embeddings $k' \to \ol{\F}_\ell$, let $D' = G_{K_\gp'}$
and define a map
$\pi:S'\to S$ by $\tau'\mapsto\tau'|_k$. 

Suppose that $L \subset \ol{K}_\gp$ is a finite unramified extension
of $K_\gp$ and $\sigma$ is an embedding of its residue field
$\CO_L/\ell\CO_L$ in $\ol{\F}_\ell$. We denote by $\omega_\sigma$
the fundamental character of $I = I_L$ defined by composing $\sigma$
with the homomorphism $I_L \to (\CO_L/\ell\CO_L)^\times$ gotten
from local class field theory.

In defining $W_\gp(\rho)$, we treat separately the cases where $\rho|_D$ is
irreducible and where it is reducible.

\subsection{The irreducible case.}

If $\rho|_D$ is irreducible,
we define $W_\gp(\rho)$ by the following rule:
\begin{equation}\label{eqn:irred}
\begin{array}{ccl} &&
\rho|_I\sim\prod_{\tau \in S}\omega_\tau^{a_\tau}
\begin{pmatrix}\prod_{\tau'\in J}\omega_{\tau'}^{b_{\pi(\tau')}}&0\\
0&\prod_{\tau'\notin J}\omega_{\tau'}^{b_{\pi(\tau')}}\end{pmatrix}\\
V_{\vec{a},\vec{b}}\in W_\gp(\rho) &\Longleftrightarrow& \\
&&\mbox{ for some $J\subset S'$ such that $\pi:J\iso S$.}\end{array}
\end{equation}

Since $\rho|_D$
is irreducible, there is a character $\xi:D'\to \ol{\F}_\ell^\times$ such
that $\rho|_D \sim \Ind_{D'}^D \xi$. We define 
$$W'(\xi) = \{\,(V_{\vec{a},\vec{b}},J)\,|\, J \subset S', \, \pi:J \iso S,\,
\xi|_I = \prod_{\tau \in S}\omega_\tau^{a_\tau}\prod_{\tau'\in J}\omega_{\tau'}^{b_{\pi(\tau')}}\,\}.$$
Thus $W_\gp(\rho) = \{\,V\,|\,(V,J)\in W'(\xi)\mbox{ for some $J$}\,\}$. (Note that replacing
$\xi$ by its conjugate under $D/D'$ replaces $J$ by its complement.)
We shall see that the projection maps $W'(\xi) \to W_\gp(\rho)$ and
$W'(\xi) \to \{\,J\subset S'\,|\pi: J \iso S\,\}$ are typically 
bijections, so that typically (but not always) $|W_\gp(\rho)| = 2^f$. 

We now choose an element of $S'$ which we denote $\tau_0'$,
and then let $\tau_i' = \tau_0'\circ \frob_\ell^i$ and $\tau_i = \pi(\tau_i')$.
Note that $S = \{\,\tau_i\,|\,i \in \Z/f\Z\,\}$ and $S' = \{\,\tau_i'\,|\,i \in \Z/2f\Z\,\}$.
Letting $\omega = \omega_{\tau_0}$
and $\omega' = \omega_{\tau_0'}$, we have $\omega_{\tau_i} = \omega^{\ell^i}$,
$\omega_{\tau_i'} = (\omega')^{\ell^i}$ and $\omega = (\omega')^{\ell^f + 1}$.
Note that $\xi|_I = (\omega')^n$ for some $n \bmod \ell^{2f} - 1$, and since
$\rho|_D$ is irreducible, $n$ is not divisible by $\ell^f +1$.

For $B \subset \{0,\ldots,f-1\}$ (where the symbol $\subset$ includes
the case of equality),
let $J_B = \{\tau'_i | i \in B\} \cup \{\tau'_{f+i}|i\not\in B\}$.
If $a\in \Z/(\ell^f-1)\Z$, $\vec{b} = (b_0,\ldots,b_{f-1})$ with each $b_i\in \{1,\ldots,\ell\}$
and $B\subset \{0,\ldots,f-1\}$, let
$$n'_{a,\vec{b},B} = a(\ell^f + 1) + \sum_{i\in B}
 b_i\ell^i + \sum_{i\not\in B} b_{i}\ell^{f + i} \bmod \ell^{2f}-1.$$
Then $W'(\xi)$ is in bijection with the set of triples $(a,\vec{b},B)$ as above
with $n \equiv n'_{a,\vec{b},B} \bmod \ell^{2f}-1$. Now note for each
$B \subset \{0,\ldots,f-1\}$, there is a unique such triple $(a,\vec{b},B)$
with this property for each
solution of
$$n \equiv \sum_{i\in B} b_i\ell^i - \sum_{i\not\in B} b_i\ell^i \bmod \ell^f+1$$
with $b_0,\ldots,b_{f-1} \in \{1,\ldots,\ell\}$.
But the values of $\sum_{i\in B}b_i\ell^i - \sum_{i\not\in B} b_i\ell^i$
are the $\ell^f$ consecutive integers
from $n'_B - \ell^f$ to $n'_B - 1$ where 
$$n'_B = \sum_{i\in B}\ell^{i+1} - \sum_{i\not\in B}\ell^i + 1,$$
so there is a solution as long as $n \not\equiv n_B' \bmod \ell^f + 1$ and this
solution is unique.
We have thus shown that $W'(\xi)$ is in bijection with the set of $B$ such
$n \not\equiv n_B' \bmod \ell^f + 1$; moreover the projection
$W'(\xi) \to \{\,J\subset S'\,|\pi: J \iso S\,\}$ is injective.

Note that if $f$ is odd and $B$ is either $\{0,2,4,\ldots,f-1\}$
or $B = \{1,3,5,\ldots,f-2\}$, then
$n'_B \equiv 0 \bmod \ell^f + 1$. To see that the converse holds as well, observe that
$$-(\ell^f+1) < -\ell\frac{\ell^{f-1}-1}{\ell-1} \le n'_B \le \frac{\ell^{f+1}-1}{\ell-1} < 2 (\ell^f + 1).$$
Thus if $n'_B \equiv 0 \bmod \ell^f + 1$, then $n'_B = 0$ or $\ell^f+1$.
If $n'_B = 0$, then solving 
$$\sum_{i\in B}\ell^{i+1} - \sum_{i\not\in B}\ell^i + 1 \equiv 0 \bmod \ell^r $$
by induction on $r$, we find that $f$ is odd and $B = \{1,3,\ldots,f-2\}$.
Similarly if $n'_B = \ell^f + 1$, then $f$ is odd and $B = \{0,2,\ldots,f-1\}$.

We now show that if $f$ is even, then the $2^f$ values of $n'_B\bmod \ell^f+1$
are distinct. First note that
$$n'_B \equiv -1 + (\ell + 1)\sum_{i\in B^*} (-1)^i\ell^i \bmod \ell^f + 1,$$
where $B^* = (\{0,2,\ldots,f-2\}\cap B) \cup (\{1,3,\ldots,f-1\}\setminus B)$.
Thus $n'_{B_1} \equiv n'_{B_2} \bmod \ell^f + 1$ if and only if
$$\sum_{i\in B_1^*} (-1)^i\ell^i \equiv \sum_{i\in B_2^*}(-1)^i\ell^i \bmod (\ell^f + 1)/d,$$
where $d = \gcd(\ell+1,\ell^f + 1) \le \ell-1$ (so $d=2$ if $\ell$ is odd, and
$d=1$ if $\ell=2$). But these two sums differ by
at most $(\ell^f - 1)/(\ell-1) < (\ell^f + 1)/d$, so the above congruence holds
if and only if equality holds, in which case $B_1^* = B_2^*$, so $B_1 = B_2$.

Next we show that if $f$ and $\ell$ are odd, then the $2^f -2$ non-zero
values of $n'_B\bmod \ell^f+1$
are distinct. In this case we have
$$n'_B \equiv (\ell + 1)\sum_{i\in B^*} (-1)^i\ell^i \bmod \ell^f + 1,$$
where $B^* = (\{0,2,\ldots,f-1\}\cap B) \cup (\{1,3,\ldots,f-2\}\setminus B)$.
Thus $n'_{B_1} \equiv n'_{B_2} \bmod \ell^f + 1$ if and only if
$$\sum_{i\in B_1^*} (-1)^i\ell^i \equiv \sum_{i\in B_2^*}(-1)^i\ell^i \bmod (\ell^f + 1)/(\ell+1).$$
But these two sums differ by
at most $(\ell^f - 1)/(\ell-1) < 2 (\ell^f + 1)/(\ell + 1)$, so if the above congruence holds
then either equality holds, in which case $B_1= B_2$, or 
$$\sum_{i\in B_2^*} (-1)^i\ell^i = \sum_{i\in B_1^*}(-1)^i\ell^i + \sum_{i=0}^{f-1}(-1)^i\ell^i,$$
exchanging $B_1$ and $B_2$ if necessary. Solving $\bmod\ \ell^r$ inductively on $r$,
we see that the only possibility is that $B_1^* = \emptyset$ and $B_2^* = \{0,1,\ldots,f-1\}$,
but these are precisely the cases where $n'_B \equiv 0 \bmod \ell^f+1$.

Finally suppose that $f$ is odd and $\ell = 2$. In this case we have 
$$n'_B \equiv 3\sum_{i\in B^*} (-1)^i 2^i \bmod 2^f + 1,$$
where $B^* = (\{0,2,\ldots,f-1\}\cap B) \cup (\{1,3,\ldots,f-2\}\setminus B)$.
In particular $n'_B \equiv 0 \bmod 3$. Moreover we have the inequality
$|n_{B_1}' - n_{B_2}'| < 3(2^f + 1)$, showing that each congruence class
$\mbox{mod $2^f+1$}$ arises as $n_B'$ for at most $3$ values of $B$. It follows
that each of the $(2^f-2)/3$ non-zero multiples of $3\bmod 2^f+1$ arises as $n'_B$
for exactly $3$ values of $B$. 

We have thus proved the following propositions:

\begin{proposition}\label{prop:irred} Suppose that $\ell$ is odd. If $f$ is even, then the
congruence classes $\bmod\ \ell^f + 1$ of the form
$$ -1 + (\ell + 1)\sum_{i\in B^*} (-1)^i\ell^i$$
are distinct and non-zero as $B^*$ runs through all subsets of $\{0,1,\ldots,f-1\}$.
 If $f$ is odd, then the
congruence classes $\bmod\ \ell^f + 1$ of the form
$$(\ell + 1)\sum_{i\in B^*} (-1)^i\ell^i$$
are distinct and non-zero as $B^*$ runs through all non-empty proper subsets of 
$\{0,1,\ldots,f-1\}$. Letting $A$ denote the set of such classes in each case, we have
$$|W'(\xi)| = \left\{\begin{array}{ll}
2^f , & \mbox{if $n\not\in A$,}\\
2^f - 1, & \mbox{if $n\in A$,}\end{array}\right.$$
where $\xi|_I=(\omega')^n$.
\end{proposition}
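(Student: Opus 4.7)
The plan is to observe that essentially all the ingredients have already been assembled in the preceding analysis, and the proposition is really a consolidation of that work. The proof thus reduces to bookkeeping.

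First I would record the rewriting of $n'_B$ modulo $\ell^f+1$. Recall
$$n'_B=\sum_{i\in B}\ell^{i+1}-\sum_{i\notin B}\ell^i+1.$$
Using $\ell^f\equiv -1\pmod{\ell^f+1}$ to fold higher indices down, and separating indices by parity, a direct calculation (already carried out above) yields
$$n'_B \equiv -1 + (\ell+1)\sum_{i\in B^*}(-1)^i\ell^i \pmod{\ell^f+1} \quad (f \text{ even}),$$
$$n'_B \equiv (\ell+1)\sum_{i\in B^*}(-1)^i\ell^i \pmod{\ell^f+1} \quad (f \text{ odd}),$$
with $B^*$ as defined in the excerpt. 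Since $B\leftrightarrow B^*$ is a bijection between subsets of $\{0,\ldots,f-1\}$, it suffices to index everything by $B^*$.

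Next I would invoke the distinctness results proved in the excerpt: for $\ell$ odd and $f$ even, the $2^f$ residues $n'_B\bmod(\ell^f+1)$ are pairwise distinct and none is zero (the latter because the vanishing analysis showed $n'_B\equiv 0$ forces $f$ odd). For $\ell$ odd and $f$ odd, exactly two values of $B$ give $n'_B\equiv 0$, namely those with $B^*=\emptyset$ or $B^*=\{0,1,\ldots,f-1\}$, and the remaining $2^f-2$ non-zero residues are pairwise distinct. Translating back through the bijection $B\leftrightarrow B^*$, the zero cases correspond precisely to $B^*$ being either empty or all of $\{0,\ldots,f-1\}$, so excluding them leaves exactly the non-empty proper subsets, proving the first two assertions of the proposition.

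For the count $|W'(\xi)|$, I would use the bijection (also established above)
$$W'(\xi)\;\longleftrightarrow\;\{\,B\subset\{0,\ldots,f-1\}\,:\,n\equiv n'_B\pmod{\ell^f+1}\,\},$$
combined with the assumption that $\rho|_D$ is irreducible, which forces $n\not\equiv 0\pmod{\ell^f+1}$. Hence in both parity cases, the total number of $B$ with $n'_B$ arising is $2^f$ (we lose nothing from the zero case, because that case is excluded by irreducibility), and the number of those with $n\equiv n'_B$ is $0$ if $n\notin A$ and $1$ if $n\in A$, by the distinctness result. Therefore $|W'(\xi)|$ is $2^f$ or $2^f-1$ accordingly.

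The only genuine obstacle is the distinctness-and-vanishing analysis of the $n'_B$ modulo $\ell^f+1$, which is exactly the elementary but delicate inductive $\bmod\ \ell^r$ argument already carried out in the paragraphs preceding the proposition; given that, the proposition follows by assembly. I would therefore present the proof as a short paragraph that cites the formulas for $n'_B$ in terms of $B^*$, quotes the distinctness results, and applies them to the bijective description of $W'(\xi)$.
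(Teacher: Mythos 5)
Your approach is correct and matches the paper's: the proposition is introduced there by ``we have thus proved the following propositions'', so the preceding distinctness and vanishing analysis of $n'_B \bmod \ell^f+1$ \emph{is} the proof, and your assembly---rewriting $n'_B$ in terms of $B^*$, invoking the bijection $B\leftrightarrow B^*$, the irreducibility constraint $n\not\equiv 0 \bmod \ell^f+1$, and the established bijection between $W'(\xi)$ and the set of $B$ with $n\not\equiv n'_B \bmod \ell^f+1$---is exactly the right consolidation. One sign slip to correct: your displayed bijection should read $n\not\equiv n'_B\pmod{\ell^f+1}$, not $n\equiv n'_B$; your subsequent count $|W'(\xi)| = 2^f - |\{B : n\equiv n'_B\}|$ is consistent with the corrected version, so this is evidently a typo rather than a conceptual error, but as written the display contradicts the conclusion.
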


\begin{proposition} \label{prop:irred2} Suppose that $\ell = 2$
and $\xi|_I=(\omega')^n$. Then
$$|W'(\xi)| = \left\{\begin{array}{ll}
2^f-1, & \mbox{if $f$ is even,}\\
2^f, & \mbox{if $f$ is odd and $3\nmid n$,}\\
2^f-3, & \mbox{if $f$ is odd and $3\mid n$.}
\end{array}\right.$$
\end{proposition}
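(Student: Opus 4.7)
My plan is to reduce the counting of $B \subset \{0,\ldots,f-1\}$ with $n \equiv n'_B \pmod{2^f+1}$ to an elementary modular arithmetic problem by first finding a clean closed form for $n'_B$. Rewriting $-\sum_{i \notin B} 2^i = \sum_{i \in B} 2^i - (2^f - 1)$ gives the identity
\[
n'_B = 3 \sum_{i \in B} 2^i + 2 - 2^f,
\]
and since $2^f \equiv -1 \pmod{2^f + 1}$, this becomes
\[
n'_B \equiv 3\!\left(1 + \sum_{i \in B} 2^i\right) \pmod{2^f + 1}.
\]
As $B$ ranges over subsets, $\sum_{i \in B} 2^i$ hits each element of $\{0,1,\ldots,2^f-1\}$ exactly once, so $1 + \sum_{i \in B} 2^i$ takes each value in $\{1, 2, \ldots, 2^f\}$ exactly once.

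When $f$ is even, $\gcd(3, 2^f+1)=1$, so multiplication by $3$ is a bijection of $\mathbf{Z}/(2^f+1)$. Consequently $B \mapsto n'_B \bmod (2^f+1)$ is injective and its image is precisely the set of non-zero residues. Since irreducibility of $\rho|_D$ forces $n \not\equiv 0 \pmod{2^f+1}$, exactly one $B$ satisfies $n \equiv n'_B$, yielding $|W'(\xi)| = 2^f - 1$.

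When $f$ is odd, $\gcd(3, 2^f+1) = 3$, so the congruence $n'_B \equiv n \pmod{2^f+1}$ can only hold when $3 \mid n$; if $3 \nmid n$ then no $B$ works and $|W'(\xi)| = 2^f$. Otherwise write $n = 3m$; dividing through by $3$ the congruence becomes $\sum_{i \in B} 2^i \equiv m - 1 \pmod{M}$ where $M = (2^f+1)/3$. Since $2^f - 1 = 3M - 2$, each residue class $r \in \{0, 1, \ldots, M-1\}$ is represented in $\{0, 1, \ldots, 2^f-1\}$ by $3$ elements when $r \leq M - 2$ and by $2$ elements when $r = M - 1$. The deficient class $r = M - 1$ corresponds to $m \equiv 0 \pmod M$, i.e. $n \equiv 0 \pmod{2^f+1}$, which is excluded by irreducibility. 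Thus exactly $3$ values of $B$ satisfy the congruence, giving $|W'(\xi)| = 2^f - 3$.

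The only real obstacle I anticipate is spotting the rewriting of $n'_B$; once the identity $n'_B \equiv 3(1 + \sum_{i \in B} 2^i) \pmod{2^f+1}$ is in hand, the remaining arguments are immediate, and the role of the irreducibility hypothesis becomes transparent---it is precisely what rules out the exceptional residue class in which the solution count departs from the generic value.
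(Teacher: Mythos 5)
Your proof is correct, and it takes a genuinely different route from the paper's.

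The paper's argument for $\ell=2$ sits inside a framework developed uniformly for all $\ell$: it passes to the auxiliary set $B^*$ obtained by ``flipping'' $B$ on alternating indices, writes $n'_B \equiv 3\sum_{i\in B^*}(-1)^i 2^i \pmod{2^f+1}$, and for $f$ odd settles the count by a pigeonhole argument (the bound $|n'_{B_1}-n'_{B_2}|<3(2^f+1)$ shows each residue class occurs at most three times, and this forces each of the $(2^f-2)/3$ nonzero multiples of $3$ to occur exactly three times). Your proof instead exploits the $\ell=2$ simplification $\ell-1=1$ to get the closed form $n'_B \equiv 3\bigl(1+\sum_{i\in B}2^i\bigr)\pmod{2^f+1}$, identifies $\sum_{i\in B}2^i$ with the binary digits of an integer in $\{0,\ldots,2^f-1\}$, and then counts directly: either $3$ is a unit (so $B\mapsto n'_B$ is a bijection onto nonzero residues when $f$ is even) or $3 \mid 2^f+1$ (so one divides through and counts representatives of a fixed class modulo $M=(2^f+1)/3$ among $\{0,\ldots,2^f-1\}$). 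This is more elementary --- no pigeonhole bound is needed because the fibers are computed exactly --- and it makes transparent exactly which residue class is ``deficient'' (namely $M-1$), precisely the one ruled out by the irreducibility hypothesis $n\not\equiv 0$. The trade-off is that the identity you rely on is specific to $\ell=2$, so the paper's more uniform $B^*$ bookkeeping remains the better tool for the companion results with general $\ell$.

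One small presentational nit: when you say ``its image is precisely the set of non-zero residues'' in the $f$ even case, it's worth noting explicitly that $\{1,\ldots,2^f\}$ is a complete set of nonzero residues mod $2^f+1$ and that a unit multiplication preserves the property of being nonzero; you clearly have this in mind but a referee would want the one extra clause.
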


Multiple $B$ can occur with the same $(a,\vec{b})$; for example if $f=3$
and $n=1$, then $(-\ell^2,(1,\ell,1),\{0,1\})$ and $(-\ell^2,(1,\ell,1),\{0,2\})$ 
are both in $W'(\xi)$, so the map
$W'(\xi) \to W_\gp(\rho)$ is not injective. (In this case in fact,
$|W'(\xi)| = 8$, but $|W_\gp(\rho)| = 6$.) 
\begin{proposition} The map $W'(\xi) \to W_\gp(\rho)$ fails to be injective
if and only if $\ell^r n \equiv m\bmod \ell^f + 1$ for some integers $r, m$
with $|m| \le \ell(\ell^{f-2}-1)/(\ell - 1)$.
\end{proposition}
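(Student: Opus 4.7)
The plan is to translate the non-injectivity of $W'(\xi)\to W_\gp(\rho)$ into an explicit congruence for $n$ modulo $\ell^f+1$, to prove that the ``difference set'' $D=B_1\triangle B_2$ is cyclically consecutive in $\Z/f\Z$, and then to rotate by an appropriate power of $\ell$ to produce a small representative.

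Given two distinct triples $(a,\vec b,B_1),(a,\vec b,B_2)\in W'(\xi)$ defining the same Serre weight, subtracting the congruences $n\equiv n'_{a,\vec b,B_s}\bmod\ell^{2f}-1$ and dividing through by $\ell^f-1$ yields
$$\Delta:=\sum_{i\in B_1\setminus B_2}b_i\ell^i-\sum_{i\in B_2\setminus B_1}b_i\ell^i\equiv 0\bmod\ell^f+1.$$
Since $|\Delta|\le\ell(\ell^f-1)/(\ell-1)<2(\ell^f+1)$, this forces $\Delta\in\{0,\pm(\ell^f+1)\}$. Reducing either original congruence modulo $\ell^f+1$ via $\ell^{f+i}\equiv-\ell^i$ and using $\Delta\equiv0$ then gives $n\equiv M\bmod\ell^f+1$ with $M=\sum_{i\in C}b_i\ell^i-\sum_{i\in C^c}b_i\ell^i$, where $C=B_1\cap B_2$, $C^c=(B_1\cup B_2)^c$, and $C\cup C^c=\{0,\ldots,f-1\}\setminus D$.

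The main obstacle is to show that $D$ is cyclically consecutive in $\Z/f\Z$. When $\Delta=0$ I would reduce $\sum_{B_1\setminus B_2}b_i\ell^i=\sum_{B_2\setminus B_1}b_i\ell^i$ successively modulo powers of $\ell$ starting at $m=\min D$: the first reduction forces $b_m=\ell$, the next forces $m+1\in D$, and inductively $D=\{m,\ldots,M\}$ is an integer interval. When $\Delta=\pm(\ell^f+1)$ the reduction modulo $\ell$ forces $0\in D$ (with $b_0\in\{1,\ell-1\}$ depending on which side of $\Delta$ contains $0$); stripping off this term and dividing by $\ell$ reduces to a smaller instance of the same problem on $\{0,\ldots,f-2\}$ with right-hand side $\ell^{f-1}$ or $\ell^{f-1}+1$, and the recursion eventually exhibits $D$ as a cyclic interval. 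Granting cyclic consecutivity, with $d=|D|\ge2$ pick $r$ so that $\{(k+r)\bmod f:k\in D\}=\{f-d,\ldots,f-1\}$; using $\ell^j\equiv(-1)^{\lfloor j/f\rfloor}\ell^{j\bmod f}\bmod\ell^f+1$, one obtains $|\ell^rM|\le\ell\sum_{k=0}^{f-d-1}\ell^k\le\ell(\ell^{f-2}-1)/(\ell-1)$, and $\ell^rn\equiv\ell^rM$ completes the forward direction.

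For the converse, given $\ell^rn\equiv m\bmod\ell^f+1$ with $|m|\le\sum_{k=1}^{f-2}\ell^k$, a signed base-$\ell$ decomposition writes $\ell^{-r}m\equiv\sum_{k=0}^{f-3}\epsilon_k b_k\ell^k\bmod\ell^f+1$ with $\epsilon_k=\pm1$ and $b_k\in\{1,\ldots,\ell\}$. I then set $b_{f-2}=\ell$, $b_{f-1}=1$, take $B_1,B_2$ differing precisely at the positions $f-2$ and $f-1$, and choose their common part at lower positions according to the signs $\epsilon_k$; the identity $b_{f-2}\ell^{f-2}=b_{f-1}\ell^{f-1}$ supplies the Case-A coincidence ($\Delta=0$) and produces two distinct triples in $W'(\xi)$ mapping to a common Serre weight. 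Any boundary cases where such a decomposition is unavailable (e.g.\ $m=0$ with $f=3$) are handled separately using the all-positive Case-B representation $\sum_{i=0}^{f-1}b_i\ell^i=\ell^f+1$.
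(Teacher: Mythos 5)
Your argument hinges on the claim that the symmetric difference $D=B_1\triangle B_2$ is a cyclic interval in $\Z/f\Z$, and this is false. The digit-by-digit reduction you describe shows $\{m,m+1,\dots\}\subset D$ only as long as a ``carry'' persists; once the carry returns to zero the next index need not lie in $D$, and a fresh chain can begin further along, so $D$ may be a union of several blocks. Concretely, take $\ell=2$, $f=6$, $\vec{b}=(2,1,b_2,2,1,b_5)$, $B_1=\{0,2,3\}$, $B_2=\{1,2,4\}$: one checks $n'_{a,\vec{b},B_1}=n'_{a,\vec{b},B_2}$ because the differing contributions pair off ($b_0\cdot2^0=b_1\cdot2^1$, $b_3\cdot2^3=b_4\cdot2^4$, and dually for the $\ell^{f+i}$ terms), so $(a,\vec{b},B_1)$ and $(a,\vec{b},B_2)$ give two distinct elements of $W'(\xi)$ with the same image, yet $D=\{0,1,3,4\}$ has complement $\{2,5\}$, which is not a cyclic interval in $\Z/6\Z$. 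Hence no rotation places $D$ inside $\{f-d,\dots,f-1\}$, and the bound you derive for $|\ell^r M|$ does not apply. The paper's proof avoids the claim entirely: after a $\Z/2f\Z$-rotation (which, unlike a $\Z/f\Z$-rotation, also permits the swap $B\leftrightarrow\ol{B}$) one arranges $f-1\in B_1\setminus B_2$ and $0\notin B_1\cup B_2$, automatically killing your $\Delta=\pm(\ell^f+1)$ branches, and then only deduces the existence of $s$ with $(b_s,\dots,b_{f-1})=(\ell,\ell-1,\dots,\ell-1,1)$, $\{s,\dots,f-2\}\subset B_2\setminus B_1$, $f-1\in B_1\setminus B_2$. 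The point is not that $D=\{s,\dots,f-1\}$; rather, the contributions from $i\ge s$ to $n\bmod(\ell^f+1)$ cancel identically, whatever $D$ does below $s$, leaving the small remainder supported on $\{0,\dots,s-1\}$. In the example, after rotating one finds $s=4$ while $D$ becomes $\{1,2,4,5\}\supsetneq\{4,5\}$, and the argument still goes through.

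A secondary issue is in your converse: the phrase ``a signed base-$\ell$ decomposition writes $\ell^{-r}m\equiv\sum_k\epsilon_kb_k\ell^k$'' cannot be meant literally, since $\ell^{-r}m\bmod(\ell^f+1)$ need not be small at all. You should decompose $m$ itself into digits, build the two triples $(a,\vec{b},B_1),(a,\vec{b},B_2)$ so that $\ell^r n\equiv n'_{a,\vec{b},B_\nu}$, and then explicitly conjugate back by $\ell^{-r}$ (replace $a$ by $\ell^{-r}a$ and cyclically shift $\vec{b}$ and the $B_\nu$), as the paper does. Your recursion in the $\Delta=\pm(\ell^f+1)$ case is also underdeveloped (the reduced targets $\ell^{f-1}$ and $\ell^{f-1}+1$ have different shapes, and the possibility $0\notin D'$ at later stages is not addressed), but this is moot once the $\Z/2f\Z$-rotation is used to reduce to $\Delta=0$.
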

\begin{proof} Suppose first that $W'(\xi) \to W_\gp(\rho)$ is not injective.
This means that for some $a$, $\vec{b}$ and $B_1\neq B_2$, we have 
\begin{equation}\label{eqn:injfail}
n \equiv n'_{a,\vec{b},B_1} \equiv n'_{a,\vec{b},B_2}\bmod \ell^{2f}-1.\end{equation}

First we note that $B_2$ cannot be the complement $\ol{B}_1$
of $B_1$ in $\{0,\ldots,f-1\}$ since 
$$n'_{a,\vec{b},B_1}\equiv
n'_{a,\vec{b},\ol{B}_1} \equiv \ell^f n'_{a,\vec{b},B_1} \bmod \ell^{2f}-1$$
would imply $n \equiv 0 \bmod \ell^f + 1$, contradicting the
irreducibility of $\rho|_D$. We thus have $J_2 \neq S' \setminus J_1$,
where $J_1 = J_{B_1}$ and $J_2 = J_{B_2}$. One now checks
that (possibly after switching $J_1$ and $J_2$) we may find
$t \in \Z/2f\Z$ so that $\tau_{t-1}' \in J_1 \setminus J_2$ and
$\tau_t'\in J_1 \cap J_2$. We then have
$$\ell^{f-t}n \equiv n'_{a',\vec{b}',B_1'} \equiv n'_{a',\vec{b}',B_2'}\bmod \ell^{2f}-1,$$
where $a' \equiv \ell^{-t}a\bmod \ell^f-1$, $b_i' = b_{i+t\bmod f}$ for $i \in \{0,\ldots,f-1\}$
and $B_\nu'$ is such that $J_{B_\nu'} = J_\nu \circ \frob_\ell^{f-t}$ for $\nu = 1,2$.
Replacing $n$ with $\ell^{f-t}n$, we may thus assume that $\tau_{f-1}' \in J_1 \setminus J_2$
and $\tau_f' \in J_1 \cap J_2$, or equivalently, $f-1 \in B_1 \setminus B_2$ and
$0 \not\in B_1 \cup B_2$.

Returning to the congruence (\ref{eqn:injfail}), we have
$$\sum_{i\in B_1} b_i\ell^i + \ell^f \sum_{i\not\in B_1} b_i\ell^i
\equiv \sum_{i\in B_2} b_i\ell^i + \ell^f\sum_{i\not\in B_2} b_i\ell^i \bmod \ell^{2f}-1,$$
or equivalently,
$$\sum_{i\in B_1\setminus B_2} b_i\ell^i \equiv \sum_{i\in B_2\setminus B_1} b_i\ell^i
\bmod \ell^f + 1.$$
Since each sum is less than $2(\ell^f+1)$, they must either be equal or differ by
$\ell^f + 1$, and since $0 \not\in B_1 \cup B_2$, each sum is divisible by $\ell$, 
so in fact equality holds. Since $f-1\in B_1\setminus B_2$, we have
$$b_{f-1}\ell^{f-1} \le \sum_{i\in B_1\setminus B_2} b_i\ell^i =
 \sum_{i\in B_2\setminus B_1} b_i\ell^i \le \sum_{i=2}^{f-1} \ell^i < 2\ell^{f-1},$$
 so $b_{f-1} = 1$. Moreover if $b_{f-2}, b_{f-3},\ldots, b_{s+1}$ are all less
 than $\ell$ for some $s< f-2$, then we must have $b_{f-2}= b_{f-3}= \cdots = b_{s+1}= \ell -1$
 and $f-2,f-3\ldots,s \in B_2 \setminus B_1$, for if either fails, we find that
$$ \sum_{i\in B_2\setminus B_1} b_i\ell^i \le (\ell - 1)\sum_{i=s+1}^{f-2}\ell^i
 + \sum_{i=2}^{s} \ell^i < \ell^{f-1}.$$
 Since $0 \not\in B_2 \setminus B_1$, we conclude that for some $s$ with
 $0 < s < f - 1$, we have $(b_s,b_{s+1},\ldots,b_{f-1}) = (\ell,\ell-1,\ldots,\ell-1,1)$
 and $s,s+1,\ldots, f-2 \in B_2 \setminus B_1$. It follows that
 $$n \equiv \sum_{i \in B_1} b_i \ell^i - \sum_{i\not\in B_1} b_i \ell^i
 \equiv \sum_{i \in B_1, i < s} b_i \ell^i - \sum_{i\not\in B_1, i < s} b_i\ell^i \bmod \ell^f + 1,$$
and this last difference has absolute value at most $\ell(\ell^{f-2}-1)/(\ell - 1)$.
 
Conversely suppose that $\ell^r n \equiv m\bmod \ell^f + 1$ for some $r, m$
with $|m| \le \ell(\ell^{f-2}-1)/(\ell - 1)$. Replacing $r$ by $r+f$ if necessary, 
we may assume $m> 0$ and then
$$ \frac{\ell^s - 1}{\ell - 1} \le m \le \ell\cdot \frac{\ell^s - 1}{\ell - 1}$$
for some $s$ with $0 < s < f - 1$. We can then write
$m = \sum_{i=0}^{s-1} b_i\ell^i $
for some $b_0,b_1,\ldots,b_{s-1}\in \{1,\ldots,\ell\}$.
We can then choose $a \in \Z/(\ell^f - 1)\Z$ so that 
$$\ell^r n \equiv n_{a,\vec{b},B_1} \equiv n_{a,\vec{b},B_2} \bmod \ell^{2f} - 1,$$
where $\vec{b} = (b_0,b_1,\ldots,b_{s-1},\ell,\ell-1,\ldots,\ell - 1, 1)$,
$B_1 = \{0,1,\ldots,f-2\}$ and $B_2 = \{0,1,\ldots,s-1,f-1\}$. 
We conclude that 
$$n \equiv n'_{a',\vec{b}',B_1'} \equiv n'_{a',\vec{b}',B_2'}\bmod \ell^{2f}-1,$$
where $a' \equiv \ell^{-r}a\bmod \ell^f-1$, $b_i' = b_{i+r\bmod f}$ for $i \in \{0,\ldots,f-1\}$
and $B_\nu'$ is such that $J_{B_\nu'} = J_{B_\nu} \circ \frob_\ell^{-r}$ for $\nu = 1,2$.
\end{proof}

\subsection{The reducible case.}

Now suppose $\rho|_D$ is reducible, write
$\rho|_D\sim\begin{pmatrix}\chi_1&{*}\\0&\chi_2\end{pmatrix}$ and let
$c_\rho$ denote the corresponding class in $H^1(K_\gp,\chi_1\chi_2^{-1})$. 
Consider now the set of pairs 
$$W'(\chi_1,\chi_2) = \{\,(V_{\vec{a},\vec{b}},J)\, |\, J\subset S, 
\chi_1|_{I} = \prod_{\tau\in S}\omega_\tau^{a_\tau}\prod_{\tau\in J}\omega_\tau^{b_\tau},
\chi_2|_{I}= \prod_{\tau\in S}\omega_\tau^{a_\tau}\prod_{\tau\not\in J}\omega_\tau^{b_\tau}\,\},$$
with $\pi_1$ and $\pi_2$ denoting the projections
$(V_{\vec{a},\vec{b}},J)\mapsto V_{\vec{a},\vec{b}}$ and
$(V_{\vec{a},\vec{b}},J)\mapsto J$.
Note that interchanging $\chi_1$ and $\chi_2$ replaces $J$ by its complement.
We shall see that the projection map 
$\pi_2: W'(\chi_1,\chi_2)\to \{J\subset S\}$ is typically a bijection, so
that typically $|W'(\chi_1,\chi_2)|=2^f$.
However $W_\gp(\rho)$ will be defined below as
a subset of $\pi_1(W'(\chi_1,\chi_2))$ depending on $c_\rho$. 

We now analyse the set $W'(\chi_1,\chi_2)$ in a manner analogous to the
irreducible case. We write $\chi_\nu = \omega^{n_\nu}$ with $n_\nu \in \Z/(\ell^f -1)\Z$
for $\nu = 1,2$, and we let $n = n_1 - n_2$.
If $a\in \Z/(\ell^f-1)\Z$, $\vec{b} = (b_0,\ldots,b_{f-1})$ with each $b_i\in \{1,\ldots,\ell\}$
and $B\subset \{0,\ldots,f-1\}$, let
$$n_{a,\vec{b},B} = a + \sum_{i\in B}b_i\ell^i \bmod \ell^f-1.$$
Then $W'(\chi_1,\chi_2)$ is in bijection with the set of triples $(a,\vec{b},B)$ as above
with $n_1 \equiv n_{a,\vec{b},B} \bmod \ell^f-1$ and
 $n_2 \equiv n_{a,\vec{b},\ol{B}} \bmod \ell^f-1$ where $\ol{B}$ is the
complement of $B$ in $\{0,\ldots,f-1\}$. Note that for each $B \subset \{0,\ldots,f-1\}$
there is a unique such triple for each solution of
$$n \equiv \sum_{i\in B} b_i\ell^i - \sum_{i\not\in B} b_i\ell^i \bmod \ell^f-1.$$
with $b_0,\ldots,b_{f-1} \in \{1,\ldots,\ell\}$.
But the values of $\sum_{i\in B}b_i\ell^i - \sum_{i\not\in B} b_i\ell^i$
are the $\ell^f$ consecutive integers
from $n_B + 1 - \ell^f $ to $n_B$ where 
$$n_B = \sum_{i\in B}\ell^{i+1} - \sum_{i\not\in B}\ell^i,$$
so there is a unique solution if $n \not\equiv n_B \bmod \ell^f - 1$ and two
solutions if $n \equiv n_B \bmod \ell^f - 1$.
In particular the projection $W'(\chi_1,\chi_2) \to \{J \subset S\}$ is surjective
and $|W'(\chi_1,\chi_2)| = 2^f + |\{\,B\,|\,n \equiv n_B \bmod \ell^f - 1\}|$.

We now show that if $f$ is odd, then the $2^f$ values of $n_B\bmod \ell^f-1$
are distinct, unless $\ell = 2$ or $3$, in which case $n_{\{0,\ldots,f-1\}} \equiv n_\emptyset
\bmod \ell^f - 1 $ and the rest are distinct. First note that
$$n_B \equiv -1 + (\ell + 1)\sum_{i\in B^*} (-1)^i\ell^i \bmod \ell^f - 1,$$
where $B^* = (\{0,2,\ldots,f-1\}\cap B) \cup (\{1,3,\ldots,f-2\}\setminus B)$.
Thus $n_{B_1} \equiv n_{B_2} \bmod \ell^f - 1$ if and only if
$$\sum_{i\in B_1^*} (-1)^i\ell^i \equiv \sum_{i\in B_2^*}(-1)^i\ell^i \bmod (\ell^f - 1)/d,$$
where $d = \gcd(\ell+1,\ell^f - 1)$. If $\ell > 3$, then $d = 2$ and
the two sums differ by
at most $(\ell^f - 1)/(\ell-1) < (\ell^f - 1)/d$, so the above congruence holds
if and only if equality holds, in which case $B_1^* = B_2^*$, so $B_1 = B_2$.
If $\ell = 2$ or $3$, then $d = \ell - 1$, but the two sums differ by $(\ell^f - 1)/(\ell - 1)$
only when one of $B_1$ or $B_2$ is $\{0,\ldots,f-1\}$ and the other is empty.

Now consider the case where $f$ is even and $\ell> 3$.
We then have
$$n_B \equiv (\ell + 1)\sum_{i\in B^*} (-1)^i\ell^i \bmod \ell^f - 1,$$
where $B^* = (\{0,2,\ldots,f-2\}\cap B) \cup (\{1,3,\ldots,f-1\}\setminus B)$.
Thus $n_{B_1} \equiv n_{B_2} \bmod \ell^f - 1$ if and only if
$$\sum_{i\in B_1^*} (-1)^i\ell^i \equiv \sum_{i\in B_2^*}(-1)^i\ell^i \bmod (\ell^f - 1)/(\ell+1).$$
But these two sums differ by
at most $(\ell^f - 1)/(\ell-1)$, which is less than $2 (\ell^f - 1)/(\ell + 1)$.
So if the above congruence holds
then either equality holds, in which case $B_1= B_2$, or 
$$\sum_{i\in B_2^*} (-1)^i\ell^i = \sum_{i\in B_1^*}(-1)^i\ell^i + \sum_{i=0}^{f-1}(-1)^i\ell^i,$$
exchanging $B_1$ and $B_2$ if necessary. Solving $\bmod \ell^r$ inductively on $r$,
we see that the only possibility is that $B_1^* = \emptyset$ and $B_2^* = \{0,1,\ldots,f-1\}$,
in which case $n_{B_\nu} \equiv 0 \bmod \ell^f-1$.

If $f$ is even and $\ell = 3$, then the situation is the same, except that we have
$(\ell^f - 1)/(\ell-1) = 2 (\ell^f - 1)/(\ell + 1)$, so in addition to the possibilities that
arose for $\ell > 3$, we have 
$$n_{\{0,\ldots,f-1\}} \equiv n_\emptyset \equiv (\ell^f-1)/2\bmod \ell^f - 1$$
as for odd $f$.

Finally suppose that $f$ is even and $\ell = 2$. In this case we have 
$$n_B \equiv 3\sum_{i\in B^*} (-1)^i 2^i \bmod 2^f -1,$$
where $B^* = (\{0,2,\ldots,f-2\}\cap B) \cup (\{1,3,\ldots,f-1\}\setminus B)$.
In particular $n_B \equiv 0 \bmod 3$. Moreover we have
$|n_{B_1} - n_{B_2}| \le 3(2^f - 1)$ with equality possible only when
one of $B_1$ or $B_2$ is $\{0,\ldots,f-1\}$ and the other is empty,
in which case $n_{B_\nu} \equiv 0 \bmod 2^f - 1$. Thus each 
non-zero congruence class
$\bmod\ 2^f - 1$ arises as $n_B$ for at most $3$ values of $B$, while $0$
arises for at most $4$. It follows
that each of the $(2^f-4)/3$ non-zero multiples of $3\bmod 2^f -1$ arises as $n_B$
for exactly three values of $B$, while $0$ arises for exactly four values. 

We have thus proved the following propositions:

\begin{proposition} Suppose that $\ell> 3$. If $f$ is odd, then the
congruence classes $\bmod\ \ell^f - 1$ of the form
$$ -1 + (\ell + 1)\sum_{i\in B^*} (-1)^i\ell^i$$
are distinct and non-zero as $B^*$ runs through all subsets of $\{0,1,\ldots,f-1\}$.
 If $f$ is even, then the 
congruence classes $\bmod\ \ell^f - 1$ of the form
$$(\ell + 1)\sum_{i\in B^*} (-1)^i\ell^i$$
are distinct and non-zero as $B^*$ runs through all non-empty proper subsets of 
$\{0,1,\ldots,f-1\}$. Letting $A$ denote the set of such classes in each case, we have
$$|W'(\chi_1,\chi_2)| = \left\{\begin{array}{ll}
2^f + 2, & \mbox{if $n\equiv0$ mod~$\ell^f-1$ and $f$ is even,}\\
2^f + 1, & \mbox{if $n\in A$,}\\
2^f, & \mbox{otherwise.}
\end{array}\right.$$
\end{proposition}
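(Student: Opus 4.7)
The plan is to package the case-by-case computations already carried out in the paragraphs preceding the proposition. First I would record the identity
$$|W'(\chi_1,\chi_2)| = 2^f + |\{\,B\subseteq\{0,\ldots,f-1\} : n\equiv n_B \bmod \ell^f-1\,\}|$$
derived from the bijection with triples $(a,\vec{b},B)$, together with the alternating-sum formulas for $n_B\bmod\ell^f-1$ obtained via the reparametrization $B\mapsto B^*$. Since this reparametrization is a bijection on the power set of $\{0,\ldots,f-1\}$ in either parity of $f$, everything reduces to studying the map $B^*\mapsto n_B\bmod\ell^f-1$ on the respective domains.

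Next I would establish the distinctness of the $n_B$'s. If $n_{B_1}\equiv n_{B_2}\bmod\ell^f-1$, then $s_1-s_2$ is divisible by $(\ell^f-1)/d$, where $s_\nu = \sum_{i\in B_\nu^*}(-1)^i\ell^i$ and $d = \gcd(\ell+1,\ell^f-1)$. For $\ell>3$ one has $d=2$ when $f$ is odd and $d=\ell+1$ when $f$ is even. The crude bound $|s_1-s_2|\le (\ell^f-1)/(\ell-1)$ is strictly less than $(\ell^f-1)/d$ in the odd case, forcing $s_1 = s_2$ and hence $B_1^* = B_2^*$; in the even case it is strictly less than $2(\ell^f-1)/d$ (the strictness using $\ell>3$), giving either $B_1^* = B_2^*$ or $s_2 - s_1 = \pm\sum_{i=0}^{f-1}(-1)^i\ell^i$, and a $\bmod\ \ell^r$ digit induction shows the second case occurs only when $\{B_1^*,B_2^*\} = \{\emptyset,\{0,\ldots,f-1\}\}$, both yielding $n_{B_\nu}\equiv 0$.

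Then I would rule out $n_B\equiv 0$ in all remaining cases. For odd $f$ the relation $n_B\equiv 0$ reads $(\ell+1)s\equiv 1\bmod \ell^f-1$, impossible since $\ell+1$ and $\ell^f-1$ are both even for odd $\ell\geq 5$. For even $f$ the injectivity argument above already pins $n_B\equiv 0$ precisely to the two excluded subsets $\emptyset$ and $\{0,\ldots,f-1\}$, so the remaining $2^f-2$ non-empty proper subsets give distinct non-zero classes, forming the set $A$.

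Finally, to compute $|W'(\chi_1,\chi_2)|$ I would split into three cases: if $n\notin A\cup\{0\}$ no $B$ contributes, giving $2^f$; if $n\in A$ a unique $B$ contributes, giving $2^f+1$; if $n\equiv 0$ (possible only for even $f$) exactly the two distinguished subsets contribute, giving $2^f+2$. The main obstacle is the size estimate in the even-$f$ case, where the strict inequality $(\ell^f-1)/(\ell-1) < 2(\ell^f-1)/(\ell+1)$ is tightest at $\ell=5$, so one cannot avoid the inductive digit-by-digit argument to rule out the borderline accidental coincidence.
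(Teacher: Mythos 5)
Your proposal is correct and follows essentially the same route as the paper: the count $|W'(\chi_1,\chi_2)| = 2^f + \#\{B : n\equiv n_B\}$, the reparametrization $B\mapsto B^*$, the size bound $|s_1-s_2|\le(\ell^f-1)/(\ell-1)$ against the modulus $(\ell^f-1)/d$ with $d=\gcd(\ell+1,\ell^f-1)$, and the digit-by-digit induction for the even-$f$ borderline case. The one small place where you are more explicit than the paper's text is the non-vanishing of $n_B\bmod\ell^f-1$ for odd $f$: the paper establishes distinctness but leaves the non-zero claim implicit, whereas your parity observation ($(\ell+1)s\equiv1$ is impossible since $d=2\nmid1$) cleanly closes that point.
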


\begin{proposition} Suppose that $\ell = 3$. 
If $f$ is odd, then the
congruence classes $\bmod\ 3^f - 1$ of the form
$$ -1 +4\sum_{i\in B^*} (-1)^i3^i$$
are distinct and non-zero $\bmod\ (3^f - 1)/2$ as $B^*$ runs 
through all the subsets of $\{0,1,\ldots,f-1\}$ other than
$\{0,2,\ldots,f-1\}$ and $\{1,3,\ldots,f-2\}$.
If $f$ is even, then the 
congruence classes $\bmod\ 3^f - 1$ of the form
$$4\sum_{i\in B^*} (-1)^i3^i$$
are distinct and non-zero $\bmod\ (3^f-1)/2$ as $B^*$ runs 
through all non-empty proper subsets of 
$\{0,1,\ldots,f-1\}$ other than 
$\{0,2,\ldots,f-2\}$ and $\{1,3,\ldots,f-1\}$.
Letting $A$ denote the set of such classes in each case, we have
$$|W'(\chi_1,\chi_2)| = \left\{\begin{array}{ll}
2^f + 2, & \mbox{if $n\equiv 0$~mod~$\ell^f-1$ and $f$ is even, or $n \equiv (\ell^f-1)/2$}\\
2^f + 1, & \mbox{if $n\in A$,}\\
2^f, & \mbox{otherwise.}
\end{array}\right.$$
\end{proposition}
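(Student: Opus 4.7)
The plan is to mimic the proof of the preceding proposition (the case $\ell>3$), tracking the extra coincidences that arise because for $\ell=3$ we have $d:=\gcd(\ell+1,\ell^f-1)=2$ if $f$ is odd and $4$ if $f$ is even. First I would rewrite $\sum_{i\in B^*}(-1)^i 3^i$ as the base-$(-3)$ sum $s_{B^*}:=\sum_{i\in B^*}(-3)^i$, so that the congruence $n_{B_1}\equiv n_{B_2}\bmod 3^f-1$ becomes $s_{B_1^*}\equiv s_{B_2^*}\bmod (3^f-1)/d$. The backbone of the argument is a base-$(-3)$ uniqueness lemma: if $\eta_0,\ldots,\eta_{f-1}$ are integers with $|\eta_i|\le 2$ and $\sum\eta_i(-3)^i=0$, then all $\eta_i=0$, an immediate consequence of the strict bound $\sum_{i<f-1}2\cdot 3^i=3^{f-1}-1<3^{f-1}$ applied at the top coefficient, followed by descending induction.

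For $f$ odd, $(3^f-1)/d=(3^f-1)/2$ already coincides with the maximum possible value of $|s_{B_1^*}-s_{B_2^*}|$, so the congruence forces either $s_{B_1^*}=s_{B_2^*}$ (giving $B_1=B_2$ via the lemma applied to coefficients in $\{-1,0,1\}$) or the two sums are extremal, which pins down $\{B_1,B_2\}=\{\emptyset,\{0,\ldots,f-1\}\}$. For non-vanishing modulo $(3^f-1)/2$, note that $n_B\equiv 0\bmod 3^f-1$ would require $4s_{B^*}\equiv 1\bmod 3^f-1$, impossible because $\gcd(4,3^f-1)=2\nmid 1$; and the remaining class $(3^f-1)/2$ has just been identified with the two excluded subsets.

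For $f$ even, $(3^f-1)/d=(3^f-1)/4$ is half the maximum difference, so the congruence forces the difference of sums to lie in $\{0,\pm(3^f-1)/4,\pm(3^f-1)/2\}$. The identity $\sum_{i=0}^{f-1}(-3)^i=-(3^f-1)/4$ converts the cases $\pm(3^f-1)/4$ into equations $\sum\eta_i(-3)^i=0$ with $\eta_i\in\{0,1,2\}$ or $\{-2,-1,0\}$; the uniqueness lemma then forces $\{B_1,B_2\}=\{\{0,2,\ldots,f-2\},\{1,3,\ldots,f-1\}\}$, while the extreme case $\pm(3^f-1)/2$ pins down $\{\emptyset,\{0,\ldots,f-1\}\}$. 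For non-vanishing modulo $(3^f-1)/2$ I analyse the classes $n_B\equiv 0$ and $n_B\equiv (3^f-1)/2$ modulo $3^f-1$ separately, reducing each to the question of which multiples of $(3^f-1)/8$ in the achievable range $[-3(3^f-1)/8,(3^f-1)/8]$ are hit by some $s_{B^*}$. The cardinality formula then follows from $|W'(\chi_1,\chi_2)|=2^f+|\{B:n_B\equiv n\bmod 3^f-1\}|$ together with the resulting pairing of the bad subsets as $\{\emptyset,\{0,\ldots,f-1\}\}$ (both giving $n_B\equiv(3^f-1)/2$) and, for $f$ even only, $\{\{0,2,\ldots,f-2\},\{1,3,\ldots,f-1\}\}$ (both giving $n_B\equiv 0$).

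The expected main obstacle is the exclusion of the intermediate value $s_{B^*}=-(3^f-1)/8$ in the $f$ even non-vanishing step: this lies strictly between the extreme values of $s_{B^*}$, so the base-$(-3)$ uniqueness lemma does not apply directly. Instead I would rearrange the putative equation into $\sum_{i\in B^*\cap E}3^i+\sum_{i\in E}3^i=\sum_{i\in B^*\cap O}3^i$, where $E$ and $O$ denote the even and odd indices in $\{0,\ldots,f-1\}$, and observe that the two sides have disjoint base-$3$ support while the left has digit $\ge 1$ at every position in $E$, contradicting equality since $0\in E$.
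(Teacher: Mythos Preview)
Your approach is correct and follows essentially the same route as the paper: both arguments rewrite $n_B$ in terms of the sum $s_{B^*}=\sum_{i\in B^*}(-3)^i$, reduce modulo $(3^f-1)/d$, and then bound the possible differences $s_{B_1^*}-s_{B_2^*}$ to classify all coincidences $n_{B_1}\equiv n_{B_2}$. Your base-$(-3)$ uniqueness lemma is exactly the paper's ``solving $\bmod\,\ell^r$ inductively on $r$'' step.

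The one point worth noting is that your separate non-vanishing analysis, and in particular the ``main obstacle'' $s_{B^*}=-(3^f-1)/8$, is unnecessary. Once you have classified \emph{all} collisions $n_{B_1}\equiv n_{B_2}\pmod{3^f-1}$ for $f$ even (namely the pair $\{B_1^*,B_2^*\}=\{\emptyset,\{0,\ldots,f-1\}\}$, giving $n_B\equiv 0$, and the pair $\{E,O\}$, giving $n_B\equiv(3^f-1)/2$), non-vanishing modulo $(3^f-1)/2$ for the remaining $B^*$ is automatic: any other $B$ with $n_B\equiv 0$ or $n_B\equiv(3^f-1)/2$ would produce a third collision with one of the four excluded sets, contradicting the classification. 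This is how the paper proceeds, and it sidesteps the need to rule out $s_{B^*}=-(3^f-1)/8$ directly. Your direct argument for that case is valid (indeed, reducing both sides modulo $3$ already gives the contradiction), but it is extra work you do not need.
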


\begin{proposition} Suppose that $\ell = 2$. Then (all congruences on $n$ below being modulo $2^f-1$) we have
$$|W'(\chi_1,\chi_2)| = \left\{\begin{array}{ll}
2^f+4, & \mbox{if $n\equiv0$ and $f$ is even,}\\
2^f+3 & \mbox{if $n\not\equiv 0$, $3\mid n$ and $f$ is even,}\\
2^f+2, & \mbox{if $n\equiv0$ and $f$ is odd,}\\
2^f+1, & \mbox{if $n\not\equiv0$ and $f$ is odd,}\\
2^f,& \mbox{if $3\nmid n$ and $f$ is even.}\\
\end{array}\right.$$
\end{proposition}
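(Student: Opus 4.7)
The plan is to reduce the statement to computing, in each regime, the quantity
$$N(n) := |\{\,B\subset\{0,\ldots,f-1\}\mid n\equiv n_B\bmod 2^f-1\,\}|.$$
As explained in the analysis immediately preceding the three propositions, for each subset $B$ there is exactly one triple $(a,\vec b,B)\in W'(\chi_1,\chi_2)$ when $n\not\equiv n_B\bmod 2^f-1$ and exactly two such triples when $n\equiv n_B\bmod 2^f-1$. Consequently
$$|W'(\chi_1,\chi_2)|=2^f+N(n),$$
and the proof is reduced to determining $N(n)$ in each of the five regimes.

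For $f$ odd I would invoke the result established earlier (in the general discussion for odd $f$) that for $\ell=2$ the $2^f$ values $n_B\bmod 2^f-1$ are all distinct apart from the single collision $n_\emptyset\equiv n_{\{0,\ldots,f-1\}}\equiv 0$. Since $n_B\equiv 3\sum_{i\in B}2^i\bmod 2^f-1$ and $\gcd(3,2^f-1)=1$ when $f$ is odd, the $2^f-1$ distinct values automatically exhaust every residue class mod $2^f-1$. Hence $N(0)=2$ and $N(n)=1$ for every $n\not\equiv 0$, which gives the two lines of the proposition corresponding to odd $f$.

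For $f$ even I would invoke the computation in the paragraph immediately preceding the propositions, which shows via the identity $n_B\equiv 3\sum_{i\in B^*}(-1)^i 2^i\bmod 2^f-1$ that every $n_B$ is divisible by $3$, and via the bound $|n_{B_1}-n_{B_2}|\le 3(2^f-1)$ (with equality forcing $\{B_1,B_2\}=\{\emptyset,\{0,\ldots,f-1\}\}$, both of which satisfy $n_{B_\nu}\equiv 0$) that the residue class $0$ is hit by exactly $4$ subsets while each non-zero multiple of $3\bmod 2^f-1$ is hit by exactly $3$ subsets. This yields $N(0)=4$, $N(n)=3$ when $3\mid n$ but $n\not\equiv 0$, and $N(n)=0$ when $3\nmid n$, accounting for the remaining three lines.

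There is no serious obstacle: the delicate counting has already been carried out in the preceding two paragraphs, and the proposition follows by straightforward bookkeeping. The only point requiring attention is that the ``doubly hit'' residue class in both parities is precisely $0$ (since both $n_\emptyset$ and $n_{\{0,\ldots,f-1\}}$ are congruent to $0$), which explains why the counts for $n\equiv 0 \bmod 2^f-1$ exceed the generic multiplicity by exactly one in each parity.
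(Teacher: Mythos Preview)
Your proposal is correct and follows essentially the same approach as the paper: the proposition is a direct consequence of the formula $|W'(\chi_1,\chi_2)| = 2^f + N(n)$ together with the case analysis carried out in the preceding paragraphs, and you have recapitulated that bookkeeping accurately. Your treatment of odd $f$ via the identity $n_B\equiv 3\sum_{i\in B}2^i\bmod 2^f-1$ and the invertibility of $3$ is slightly more direct than the paper's route through the $B^*$-transformation (it yields both the single collision and surjectivity onto all residue classes in one stroke), but this is a cosmetic simplification rather than a different argument.
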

Note that ``$3\mid n$'' is well-defined when $f$ is even (as then $n$
is an integer modulo $4^{f/2}-1$ which is zero mod~3).
As in the irreducible case, multiple $B$ can occur with the same $(a,\vec{b})$.

\begin{proposition} The projection map from $W'(\chi_1,\chi_2)$ onto its first component fails to be injective
if and only if $\ell^r n \equiv m\bmod \ell^f - 1$ for some integers $r, m$
with $|m| \le \max\{0,\ell(\ell^{f-2}-1)/(\ell - 1)\}$.
\end{proposition}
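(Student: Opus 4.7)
The plan is to adapt the proof of the preceding proposition in the irreducible case, replacing the modulus $\ell^f + 1$ by $\ell^f - 1$ throughout, and to handle separately the case $B_2 = \ol{B}_1$ which was precluded before by irreducibility but which here corresponds exactly to the $m = 0$ case.

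For the forward direction, suppose $(a,\vec b,B_1)$ and $(a,\vec b,B_2)$ are distinct elements of $W'(\chi_1,\chi_2)$, so that $B_1 \ne B_2$. The congruence $n_{a,\vec b,B_1} \equiv n_{a,\vec b,B_2} \bmod \ell^f - 1$ reads
$$\sum_{i \in B_1 \setminus B_2} b_i \ell^i \equiv \sum_{i \in B_2 \setminus B_1} b_i \ell^i \bmod \ell^f - 1.$$
If $B_2 = \ol{B}_1$ this immediately gives $n \equiv 0 \bmod \ell^f - 1$, so I take $r = m = 0$. Otherwise $B_1 \triangle B_2$ is a proper nonempty subset of $\{0, \ldots, f-1\}$, so I pick $t$ with $t-1 \in B_1 \triangle B_2$ and $t \notin B_1 \triangle B_2$ modulo $f$; by cyclic relabelling (replacing $n$ by $\ell^{f-t} n$ and permuting the entries of $\vec b$ and the $B_\nu$ correspondingly) and swapping $B_1, B_2$ if necessary, I reduce to $f-1 \in B_1 \setminus B_2$ and $0 \notin B_1 \triangle B_2$.

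In this normalized situation both sides of the displayed congruence are divisible by $\ell$ (since the index $0$ is excluded from $B_1 \triangle B_2$), and each is bounded above by $\sum_{i=1}^{f-1} \ell \cdot \ell^i$, which a routine inequality shows to be less than $2(\ell^f - 1)$ for all $\ell \ge 2$ and $f \ge 2$. Since $\gcd(\ell, \ell^f - 1) = 1$, the difference of the two sides is a multiple of $\ell^f - 1$ with absolute value less than $2(\ell^f - 1)$ and also divisible by $\ell$, hence is zero. Equality of the sums in hand, the inductive argument of the irreducible case carries over essentially verbatim: I first force $b_{f-1} = 1$ using $b_{f-1} \ell^{f-1} \le \sum_{i=2}^{f-1} \ell^i < 2\ell^{f-1}$, then descend to find some $s$ with $0 \le s \le f-2$, $(b_s, b_{s+1}, \ldots, b_{f-1}) = (\ell, \ell-1, \ldots, \ell-1, 1)$, and $\{s, s+1, \ldots, f-2\} \subseteq B_2 \setminus B_1$. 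The contributions to $n$ from indices $\ge s$ cancel exactly, leaving $\ell^{f-t} n$ congruent, modulo $\ell^f - 1$, to $\sum_{i \in B_1, i < s} b_i \ell^i - \sum_{i \notin B_1, i < s} b_i \ell^i$, whose absolute value is at most $\ell(\ell^s-1)/(\ell-1) \le \ell(\ell^{f-2}-1)/(\ell-1)$, yielding the required $m$.

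For the converse, given $\ell^r n \equiv m \bmod \ell^f - 1$ with $|m| \le \ell(\ell^{f-2}-1)/(\ell-1)$, I construct a collision. After replacing $r$ by $r + f$ if necessary I may assume $m \ge 0$; if $m = 0$, any $\vec b$ with $\sum_i b_i \ell^i \equiv 0 \bmod \ell^f - 1$ together with a pair $(B_1, \ol{B}_1)$ produces two valid triples with the same first component. If $m > 0$, I pick the unique $s$ with $1 \le s \le f-2$ and $(\ell^s - 1)/(\ell-1) \le m \le \ell(\ell^s - 1)/(\ell-1)$, write $m = \sum_{i=0}^{s-1} b_i \ell^i$ with each $b_i \in \{1, \ldots, \ell\}$, and set $\vec b = (b_0, \ldots, b_{s-1}, \ell, \ell-1, \ldots, \ell-1, 1)$, $B_1 = \{0, \ldots, f-2\}$, $B_2 = \{0, \ldots, s-1, f-1\}$, and $a$ so that $(a, \vec b, B_1) \in W'(\chi_1, \chi_2)$; a direct check shows $(a, \vec b, B_2)$ lies there too, and undoing the cyclic shift by $\ell^r$ completes the argument. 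The main obstacle is purely book-keeping: cleanly integrating the extra case $B_2 = \ol{B}_1$ and the degenerate small-$f$ regime (where $\ell(\ell^{f-2}-1)/(\ell-1) \le 0$, handled by the $\max\{0, \cdot\}$ in the statement) into the induction inherited from the irreducible case.
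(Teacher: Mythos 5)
Your proof follows the same plan as the paper's: treat $B_2 = \ol{B}_1$ separately (giving $m=0$), and otherwise normalise cyclically so that $f-1 \in B_1 \setminus B_2$ and $0 \notin B_1 \triangle B_2$, then run the descent from the irreducible case with $\ell^f+1$ replaced by $\ell^f-1$. The forward direction is sound, and the collision you construct in the converse is correct.

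There is, however, one real error in the converse, inherited from the irreducible-case argument but invalid here: you propose to reduce to $m \geq 0$ by ``replacing $r$ by $r+f$''. In the irreducible case the modulus is $\ell^f+1$ and $\ell^f \equiv -1$, so that replacement flips the sign of $m$; but here the modulus is $\ell^f - 1$ and $\ell^f \equiv 1$, so $\ell^{r+f}n \equiv \ell^r n$ and nothing changes. The orbit $\{\ell^r n \bmod (\ell^f-1)\}$ is not symmetric under negation, so hitting a small negative residue does not imply hitting a small positive one. The correct reduction is to replace $n$ by $-n$, which corresponds to swapping $\chi_1$ and $\chi_2$: this takes $(V,J) \in W'(\chi_1,\chi_2)$ to $(V,\ol{J}) \in W'(\chi_2,\chi_1)$, so it preserves the first component and hence the injectivity (or not) of $\pi_1$, while replacing $m$ by $-m$. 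With that substitution the argument goes through. Two small slips worth fixing: in the $m=0$ case, the condition ``$\sum_i b_i\ell^i \equiv 0$ with a pair $(B_1,\ol{B}_1)$'' is only the right condition for $B_1 = \{0,\ldots,f-1\}$ (in general the relevant quantity is $\sum_{i\in B_1}b_i\ell^i - \sum_{i\notin B_1}b_i\ell^i$); and in the descent, since $\{s,\ldots,f-2\}\subseteq B_2\setminus B_1$ while $0\notin B_1\triangle B_2$, one in fact has $s\geq 1$, though your stated range $0 \le s \le f-2$ is harmless for the bound.
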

\begin{proof} The statement that the projection map from $W'(\chi_1,\chi_2)$
to its first component is not injective is equivalent to the statement
that for some $a$, $\vec{b}$ and $B_1\neq B_2$, we have 
$$n_1 \equiv n_{a,\vec{b},B_1} \equiv n_{a,\vec{b},B_2}\bmod\ell^f -1$$
and
$$n_2 \equiv n_{a,\vec{b},\overline{B}_1} \equiv n_{a,\vec{b},\overline{B}_2}\bmod\ell^f -1,$$
where $\overline{B}$ denotes the complement of $B$ in $\{0,1,2,\ldots,f-1\}$.

We first deal with the special case $n\equiv 0 \bmod \ell^f- 1$. In this case the map is never
injective; take $\vec{b}=(\ell-1,\ell-1,\ldots,\ell-1)$, 
$B_1=\{0,1,\ldots,f-1\}$ and $B_2=\emptyset$ (with the appropriate value of~$a$).

So let us now assume that $n\not\equiv 0 \bmod \ell^f -1$. Suppose first
that the projection map is not injective. Because $n\not\equiv0$
we check that $B_2$ cannot be the complement of $B_1$ (note that this finishes
the proof in the case $f=1$). We can then assume $f-1 \in B_1 \setminus B_2$
and $0 \not\in (B_1\setminus B_2) \cup (B_2 \setminus B_1)$ after exchanging $B_1$ and $B_2$ and multiplying
$n$ by a power of $\ell$ if necessary,
and complete the argument as in the irreducible case.
\end{proof}

For each pair $\alpha = (V_{\vec{a},\vec{b}},J)\in W'(\chi_1,\chi_2)$ we shall define below a
subspace $L_\alpha \subset H^1(K_\gp,\ol{\F}_\ell(\chi_1\chi_2^{-1}))$ of dimension
$|J|+\delta$, where $\delta = 0$ except in certain cases where $\chi_1\chi_2^{-1}$
is trivial or cyclotomic. We then define $W_\gp(\rho)$ by the following rule: 
$$V_{\vec{a},\vec{b}}\in W_\gp(\rho) \mbox{\ if and only if\ }c_\rho \in L_\alpha
\mbox{\ for some\ }\alpha = (V_{\vec{a},\vec{b}},J)\in W'(\chi_1,\chi_2).$$
Note in particular that $W_\gp(\rho)\subset\pi_1(W'(\chi_1,\chi_2))$, with
equality if $c_\rho=0$, that is, if $\rho|D$ is split.

Before defining the subspace $L_\alpha$, we recall some facts
about crystalline representations. 
Recall that a character $\psi:D \to \ol{\Q}_\ell^\times$ is crystalline if and only the
filtered $\phi$-module
$D_\crys(\psi) = (B_\crys \otimes_{\Q_\ell} \ol{\Q}_\ell(\psi))^D$ is free of
rank one over $K_\gp \otimes_{\Q_\ell} \ol{\Q}_\ell$.

For each $\tau \in S$, let $e_\tau: K_\gp \otimes_{\Q_\ell} \ol{\Q}_\ell \to \ol{\Q}_\ell$ denote
the projection defined by $a\otimes b \mapsto \tilde{\tau}(a)b$ where 
$\tilde{\tau}$ is the embedding $K_\gp \to \ol{\Q}_\ell$ reducing to $\tau$,
and let $e_\tau D_\crys(\psi)$ denote the filtered $\ol{\Q}_\ell$-vector space 
$D_\crys(\psi)\otimes_{K_\gp\otimes\ol{\Q}_\ell,e_\tau}\ol{\Q}_\ell$.
\begin{lemma} \label{lem:inertia}
Suppose that $\psi$ is a crystalline character and for each $\tau\in S$,
$m_\tau$ is the integer such that $\gr^{-m_\tau} e_\tau D_\crys(\psi) \neq 0$.
Then $\ol{\psi}|_I = \prod_{\tau\in S} \omega_\tau^{m_\tau}$.
\end{lemma}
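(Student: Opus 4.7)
The plan is to reduce the statement, via multiplicativity and the classification of crystalline characters, to the verification of the formula on a distinguished family of generators coming from Lubin--Tate theory.

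First, observe that both sides of the claimed identity are multiplicative in $\psi$: the tensor product of crystalline representations is crystalline with added Hodge--Tate data, so $m_\tau(\psi\psi')=m_\tau(\psi)+m_\tau(\psi')$, while the exponents on $\omega_\tau$ obviously add. Moreover both sides are insensitive to twisting $\psi$ by an unramified character, since such a twist has all Hodge--Tate weights zero and is trivial on $I$. Since $K_\gp/\Q_\ell$ is unramified, the standard classification (see Fontaine's work on $p$-adic representations) shows that the group of crystalline characters of $G_{K_\gp}$ with values in $\Qlbar^\times$, modulo unramified twists, is free abelian of rank $f=|S|$ detected by the Hodge--Tate weights. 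It therefore suffices to exhibit, for each $\tau_0\in S$, one crystalline character $\psi_{\tau_0}$ with $m_\tau(\psi_{\tau_0})=1$ if $\tau=\tau_0$ and $0$ otherwise, satisfying $\ol{\psi}_{\tau_0}|_I = \omega_{\tau_0}$.

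Second, construct the generators from Lubin--Tate theory. Because $K_\gp$ is unramified over $\Q_\ell$, the formal $\CO_{K_\gp}$-module associated to the uniformiser $\ell$ produces a Lubin--Tate character $\chi_{LT}\colon G_{K_\gp}^{\ab}\to\CO_{K_\gp}^\times$. For each embedding $\wt{\tau}\colon\CO_{K_\gp}\hookrightarrow\ol{\Z}_\ell$ lifting $\tau$, set $\psi_\tau:=\wt{\tau}\circ\chi_{LT}$. A direct calculation of $D_\crys$ for a Lubin--Tate character (using that each $\wt\tau$ picks out the corresponding component of $K_\gp\otimes_{\Q_\ell}\Qlbar$) shows that $\psi_\tau$ is crystalline with Hodge--Tate data $m_{\tau'}(\psi_\tau)=\delta_{\tau',\tau}$. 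Third, compute the mod $\ell$ reduction on inertia: by local class field theory in the geometric normalisation used throughout the paper, the restriction of $\chi_{LT}$ to $I$ agrees, via the reciprocity map $I\to\CO_{K_\gp}^\times$, with the identity. Reducing modulo the maximal ideal gives $I\to k^\times$, and post-composing with $\wt{\tau}$ (which reduces to $\tau$) is by definition $\omega_\tau$. Hence $\ol{\psi}_\tau|_I=\omega_\tau$, and the lemma follows by multiplicativity.

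The principal obstacle is bookkeeping of sign conventions. The definition of $m_\tau$ uses the $(-m_\tau)$-th graded piece of $e_\tau D_\crys(\psi)$; the characters $\omega_\tau$ are defined with geometric (rather than arithmetic) Frobenius corresponding to uniformisers; and the Lubin--Tate character is only canonical up to the choice of uniformiser and a normalisation of the reciprocity map. The real content of the proof is to check that these three sign choices are mutually compatible, so that the formula produces $\omega_\tau^{+m_\tau}$ rather than $\omega_\tau^{-m_\tau}$. A convenient sanity check is to apply the formula to $\psi=\det$ of a Lubin--Tate character, or more concretely to the cyclotomic character, both of whose Hodge--Tate weights and whose mod $\ell$ restriction to inertia are classically known; matching these pins down the conventions, after which the general case follows formally.
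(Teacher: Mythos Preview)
Your proposal is correct and follows essentially the same strategy as the paper: reduce via multiplicativity and invariance under unramified twists to the case where a single $m_\tau$ equals $1$ and the rest vanish, then verify that base case. The only difference is that the paper dispatches the base case by citing Theorems~5.3 and~8.4 of Fontaine--Laffaille, whereas you construct the generators explicitly as $\wt{\tau}\circ\chi_{LT}$ via Lubin--Tate theory and compute their reduction on inertia directly; your route is more self-contained and makes the sign bookkeeping explicit, while the paper's is terser.
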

\begin{proof} Crystalline characters satisfying the first condition correspond to 
(weakly) admissible filtered $\phi$-modules with the specified filtration. These exist, 
and any two such differ by an unramified twist. Taking tensor products, the lemma 
reduces to the case where $m_\tau = 1$ if $\tau=\tau_0$ and
$m_\tau=0$ otherwise. The result in this case follows for example from
Theorems~5.3 and 8.4 of \cite{fon_laf}.
\end{proof}

Recall that $\alpha = (V_{\vec{a},\vec{b}},J)\in W'(\chi_1,\chi_2)$ if and only if
$$\chi_1|_{I}= 
\prod_{\tau\in S}\omega_\tau^{a_\tau}\prod_{\tau\in J}\omega_\tau^{b_\tau},\qquad
\chi_2|_{I}
= \prod_{\tau\in S}\omega_\tau^{a_\tau}\prod_{\tau\not\in J}\omega_\tau^{b_\tau}.$$
\begin{lemma}\label{lem:lift}
Suppose that $\alpha = (V_{\vec{a},\vec{b}},J) \in W'(\chi_1,\chi_2)$. 
Let $m_{\tau,\alpha} = b_\tau$ if $\tau\in J$ and
 $m_{\tau,\alpha}=-b_\tau$ if $\tau \not\in J$.
Then there is a unique lift $\chi_{\alpha}$ of $\chi_1\chi_2^{-1}$ with the following 
properties:
\begin{itemize} 
\item $\chi_{\alpha}$ is crystalline with $\gr^{- m_\tau} e_\tau D_\crys(\chi_{\alpha}) \neq 0$
for each $\tau \in S$;
\item if $g \in D^\ab$ corresponds via local class field theory to $\ell$, then 
$\chi_{\alpha}(g)$ is the Teichm\"uller lift of $\chi_1\chi_2^{-1}(g)$. 
\end{itemize}
\end{lemma}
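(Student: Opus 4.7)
The plan is to produce $\chi_\alpha$ by correcting any convenient crystalline character of the prescribed Hodge-Tate type by an unramified twist, and to extract uniqueness from the rigidity of unramified crystalline characters.

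First I would observe that the defining relations of $W'(\chi_1,\chi_2)$ give
$$\chi_1\chi_2^{-1}|_I = \prod_{\tau\in J}\omega_\tau^{b_\tau}\prod_{\tau\notin J}\omega_\tau^{-b_\tau} = \prod_{\tau\in S}\omega_\tau^{m_{\tau,\alpha}},$$
so by Lemma~\ref{lem:inertia} any crystalline character of $D$ with labelled Hodge-Tate weights $(m_{\tau,\alpha})_\tau$ automatically has the correct restriction to $I$ after reduction mod~$\ell$. Using the existence statement noted in the proof of Lemma~\ref{lem:inertia} (crystalline characters with any prescribed Hodge-Tate type exist, built as tensor products of rank-one pieces with a single nonzero weight), I would choose such a character $\psi_0$. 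Next, I would define $\eta$ to be the unique unramified character of $D$ with $\eta(g) = [\chi_1\chi_2^{-1}(g)]/\psi_0(g) \in \ol{\Z}_\ell^\times$; since $\overline{\psi_0}|_I = \chi_1\chi_2^{-1}|_I$, a short check shows that $\overline{\eta}$ agrees with the unramified character $\overline{\psi_0}^{-1}\chi_1\chi_2^{-1}$ on $g$ and hence on all of $D$. Thus $\chi_\alpha := \psi_0\eta$ is a crystalline lift of $\chi_1\chi_2^{-1}$ with Hodge-Tate weights $(m_{\tau,\alpha})_\tau$ satisfying the Teichm\"uller condition at $g$, which gives existence.

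For uniqueness, given a second such lift $\chi_\alpha'$, the ratio $\chi_\alpha'/\chi_\alpha$ is crystalline with all Hodge-Tate weights zero, hence unramified, and takes the value $1$ at $g$; since an unramified character of $D$ is determined by its value at Frobenius, this forces $\chi_\alpha' = \chi_\alpha$. The main input, and really the only non-bookkeeping step, is the structural fact from Fontaine-Laffaille theory (already invoked for Lemma~\ref{lem:inertia}) that crystalline characters with prescribed labelled Hodge-Tate weights exist and form a single orbit under unramified twists; the rest is matching normalisations.
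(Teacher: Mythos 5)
Your argument is essentially the same as the paper's: both start from a crystalline character $\psi$ of the prescribed labelled Hodge--Tate type (whose reduction is then an unramified twist of $\chi_1\chi_2^{-1}$ by Lemma~\ref{lem:inertia}), and then correct by the unique unramified character whose value at $g$ fixes the Teichm\"uller normalisation. You simply spell out more explicitly the verification that the correction works and the uniqueness step (ratio is crystalline with zero Hodge--Tate weights, hence unramified, hence determined by its value at $g$), both of which the paper leaves implicit; the substance is the same.
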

\begin{proof} Let $\psi$ be a character satisfying the first condition. The preceding lemma
shows that the reduction of $\psi$ is an unramified twist of $\chi_1\chi_2^{-1}$. 
Let $\chi_{\alpha}
= \delta \psi$, where $\delta$ is the unramified
character with $\delta(g) = \tilde{\chi}_1\tilde{\chi}_2^{-1}(g)\psi^{-1}(g)$ (and the tildes denote Teichm\"uller lifts).
\end{proof}

Recall that if $\psi: D \to \ol{\Q}_\ell^\times$ is a crystalline representation,
then Bloch and Kato define a subspace $H^1_f(K_\gp,\ol{\Q}_\ell(\psi))$ corresponding to
those extensions of topological $\ol{\Q}_\ell D$-modules
$$0 \to \ol{\Q}_\ell(\psi) \to E \to \ol{\Q}_\ell \to 0$$
which are crystalline. By Corollary~3.8.4 of~\cite{bloch_kato} for example, 
we have 
$$\dim H^1_f(K_\gp,\ol{\Q}_\ell(\psi)) = \dim H^0(K_\gp,\ol{\Q}_\ell(\psi)) + \dim D_\crys(\psi)
 - \dim \fil^0 D_\crys(\psi)$$
 where the dimensions are over $\ol{\Q}_\ell$.
Applying this for $\psi = \chi_\alpha$ for
$\alpha = (V_{\vec{a},\vec{b}},J)\in W'(\chi_1,\chi_2)$, we see that
$\dim H^1_f(K_\gp,\ol{\Q}_\ell(\chi_\alpha)) = |J|$.
We then define $H^1_f(K_\gp,\ol{\Z}_\ell(\chi_\alpha))$ as the preimage
of $H^1_f(K_\gp,\ol{\Q}_\ell(\chi_\alpha))$ under the natural map
$$H^1(K_\gp,\ol{\Z}_\ell(\chi_\alpha))\to H^1(K_\gp,\ol{\Q}_\ell(\chi_\alpha))$$
and $L_\alpha'$ as the image of $H^1_f(K_\gp,\ol{\Z}_\ell(\chi_\alpha))$
under the natural map
$$H^1(K_\gp,\ol{\Z}_\ell(\chi_\alpha))\to H^1(K_\gp,\ol{\F}_\ell(\chi_1\chi_2^{-1})).$$
We then let $L_\alpha = L_\alpha'$ except in two cases:
\begin{itemize}
\item if $\chi_1\chi_2^{-1}$ is the cyclotomic character,
 $\vec{b} = (\ell,\ldots,\ell)$ and $J = S$, then we let 
 $L_\alpha = H^1(K_\gp,\ol{\F}_\ell(\chi_1\chi_2^{-1}))$;
\item if $\chi_1\chi_2^{-1}$ is the trivial character and $J \neq S$, then we let 
 $L_\alpha = L_\alpha' + L_\ur$, where $L_\ur$ is the one-dimensional space
of unramified classes in $H^1(K_\gp,\ol{\F}_\ell)$.
 \end{itemize}
 
\begin{remark} \label{rmk:indep} Recall that in defining $\chi_\alpha$ we chose a lift (namely the
Teichm\"uller lift) of $\chi_1\chi_2^{-1}(g)$.  If $b_\tau < \ell$ for all $\tau\in S$,
then one can show using Fontaine-Laffaille theory \cite{fon_laf} that the space
$L_{\alpha}'$ is independent of the choice of lift.  We expect this to be true
even if some $b_\tau=\ell$, except in the two cases where we accordingly modified the
definition of $L_\alpha$.  This independence of lift is proved for $f=2$ by Chang and
one of the authors in \cite{cd_new}, Remark~7.13.
\end{remark}

\begin{remark} \label{rmk:peuramifie}  If $\chi_1\chi_2^{-1}$ is the cyclotomic character,
$\vec{b}=(1,\ldots,1)$ and $J=S$, then $\chi_\alpha$ is the $\ell$-adic cyclotomic character.
In that case we have the isomorphism
$H^1(K_\gp,\ol{\Q}_\ell(\chi_\alpha)) \cong (K_\gp^\times)\hat{\ }\otimes_{\Z_\ell}\ol{\Q}_\ell$
by Kummer theory (where $\hat{\cdot}$ denotes $\ell$-adic completion), and one
knows that
$H^1_f(K_\gp,\ol{\Q}_\ell(\chi_\alpha))\cong (\CO_{K_\gp}^\times)\hat{\ }\otimes_{\Z_\ell}\ol{\Q}_\ell$
(these being the extensions arising from $\ell$-divisible groups).  It follows that
$L_\alpha$ corresponds to $\CO_{K_\gp}^\times\otimes\ol{\F}_\ell$ under the isomorphism
$H^1(K_\gp,\ol{\F}_\ell(\chi_1\chi_2^{-1})) \cong K_\gp^\times\otimes\ol{\F}_\ell$, hence
consists of the classes which are {\em peu ramifi\'ees} in the sense of Serre~\cite{serre:duke}.
\end{remark}
 
 \begin{lemma}\label{lem:dim} If $\alpha = (V_{\vec{a},\vec{b}},J)\in W'(\chi_1,\chi_2)$, then
 $\dim L_\alpha = |J|$ except in the following cases:
 \begin{enumerate}
 \item if $\chi_1\chi_2^{-1}$ is cyclotomic, $\vec{b} = (\ell,\ldots,\ell)$, $J=S$
 and $\ell > 2$, then $\dim L_\alpha = |J|+1$;
 \item if $\chi_1\chi_2^{-1}$ is trivial, then $\dim L_\alpha = |J| + 1$ 
 unless either $L_\ur \not\subset L_\alpha'$
 or $\vec{b} = (\ell,\ldots,\ell)$, in which case $\dim L_\alpha = |J|+2$.
 \end{enumerate}
 \end{lemma}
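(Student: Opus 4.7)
The plan is to compute $\dim L_\alpha'$ first, then assemble $\dim L_\alpha$ by handling the two modifications to the definition of $L_\alpha'$ separately. The heart of the argument is a saturation lemma for the integral Selmer module. Since each $b_\tau\ge 1$, all Hodge--Tate weights $m_{\tau,\alpha}$ of $\chi_\alpha$ are non-zero, so $\chi_\alpha$ itself is non-trivial; hence $H^0(K_\gp,\ol{\Z}_\ell(\chi_\alpha))=0$. The long exact sequence attached to
$$0 \to \ol{\Z}_\ell(\chi_\alpha) \xrightarrow{\ell} \ol{\Z}_\ell(\chi_\alpha) \to \ol{\F}_\ell(\chi_1\chi_2^{-1}) \to 0$$
then yields $H^1(K_\gp,\ol{\Z}_\ell(\chi_\alpha))[\ell] \cong H^0(K_\gp,\ol{\F}_\ell(\chi_1\chi_2^{-1}))$, which vanishes when $\chi_1\chi_2^{-1}\neq 1$ and is one-dimensional when $\chi_1\chi_2^{-1}=1$. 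Because $H^1_f(K_\gp,\ol{\Z}_\ell(\chi_\alpha))$ is saturated in $H^1(K_\gp,\ol{\Z}_\ell(\chi_\alpha))$ (being the preimage of a $\ol{\Q}_\ell$-subspace, it in particular absorbs all $\ell$-power torsion), working over a finite subfield $E\subset\ol{\Q}_\ell$ over which $\chi_\alpha$ is defined and applying the structure theorem for $\CO_E$-modules yields
$$\dim_{\ol{\F}_\ell} L_\alpha' = \dim_{\ol{\Q}_\ell} H^1_f(K_\gp,\ol{\Q}_\ell(\chi_\alpha)) + \dim_{\ol{\F}_\ell} H^1(K_\gp,\ol{\Z}_\ell(\chi_\alpha))[\ell].$$
The first summand is $|J|$ by the Bloch--Kato calculation recalled immediately before the lemma, so $\dim L_\alpha'= |J|$ when $\chi_1\chi_2^{-1}$ is non-trivial and $|J|+1$ when it is trivial.

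The two exceptional cases now follow by combining this with the modifications to $L_\alpha$. In the cyclotomic boundary case ($\chi_1\chi_2^{-1}$ cyclotomic, $\vec b=(\ell,\ldots,\ell)$, $J=S$, $\ell>2$), one has $L_\alpha=H^1(K_\gp,\ol{\F}_\ell(\chi_1\chi_2^{-1}))$ by definition, and the local Euler--Poincaré formula together with Tate duality gives $\dim H^1 = f + \dim H^0 + \dim H^2 = f+0+1 = f+1$, using that $\mu_\ell\not\subset K_\gp$ (as $\ell>2$ is unramified in $K$) to get $H^0=0$ and $H^2\cong H^0(\ol{\F}_\ell)^\vee$; this matches $|J|+1=f+1$. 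In the trivial case, $L_\alpha = L_\alpha'$ when $J=S$ (giving dimension $|J|+1$ directly from above), while for $J\neq S$ we have $L_\alpha = L_\alpha' + L_\ur$, so $\dim L_\alpha = |J|+1$ if $L_\ur\subset L_\alpha'$ and $|J|+2$ otherwise.

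The main obstacle is verifying that $\vec{b}=(\ell,\ldots,\ell)$ in the trivial case forces $L_\ur\not\subset L_\alpha'$, as implicitly claimed by the lemma. My approach would be to identify explicitly the line in $L_\alpha'$ coming from the $\ell$-torsion class in $H^1(K_\gp,\ol{\Z}_\ell(\chi_\alpha))$. When every $b_\tau<\ell$, Fontaine--Laffaille theory (compare Remark~\ref{rmk:indep} and the analysis in Remark~\ref{rmk:peuramifie}) describes $\chi_\alpha$ via a filtered $\phi$-module whose reduction captures precisely the \emph{peu ramifi\'ees} classes, which contain the unramified line, so $L_\ur\subset L_\alpha'$ generically. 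When $\vec{b}=(\ell,\ldots,\ell)$, the Hodge--Tate weights of $\chi_\alpha$ sit at the upper boundary of the Fontaine--Laffaille range: the torsion class now reduces to a \emph{tr\`es ramifi\'ee} class, $L_\ur$ is no longer contained in $L_\alpha'$, and the extra dimension recorded by the lemma is picked up by the sum $L_\alpha=L_\alpha'+L_\ur$.
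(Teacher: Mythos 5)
Your computation of $\dim L_\alpha'$ is correct and, via the long exact sequence and saturation of $H^1_f$, slightly more careful than the paper's terse version; the cyclotomic boundary case for $\ell>2$ via local Euler characteristic is also fine. However, the handling of the trivial case has genuine gaps, essentially because you do not pin down what $\vec{b}=(\ell,\ldots,\ell)$ with $\chi_1\chi_2^{-1}$ trivial actually forces. Since $\chi_1|_I=\chi_2|_I$ must hold with all $b_\tau=\ell$, one checks (as in the surrounding combinatorics) that this is only possible when $\ell=2$, and then only for $J=S$ or $J=\emptyset$. You never make this observation, and it matters twice.

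First, for $\ell=2$ the trivial and cyclotomic mod-$\ell$ characters coincide, so when $\vec{b}=(2,\ldots,2)$ and $J=S$ it is the \emph{first} modification in the definition of $L_\alpha$ that applies, not "no modification": $L_\alpha=H^1(K_\gp,\ol{\F}_2)$, which by Euler--Poincar\'e (with $h^0=h^2=1$) has dimension $f+2=|J|+2$. Your sentence "$L_\alpha=L_\alpha'$ when $J=S$, giving $|J|+1$" is therefore wrong in exactly the case the lemma's "$\vec{b}=(\ell,\ldots,\ell)$" clause is meant to cover. Second, for $J=\emptyset$ your Fontaine--Laffaille "boundary" heuristic is not a proof that $L_\ur\not\subset L_\alpha'$: the weight $\ell$ is genuinely outside the range where Fontaine--Laffaille gives such conclusions for free, and you assert rather than verify that the one-dimensional image of the torsion class is ramified. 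The paper instead identifies $L_\alpha'$ concretely as the line spanned by the reduction of the homomorphism $(\omega^2-1)/8$ (with $\omega$ the $2$-adic cyclotomic character), which is visibly ramified. Finally, your "generically $L_\ur\subset L_\alpha'$" is doing real work that needs justification: for $J=S$, $\vec{b}=(\ell-1,\ldots,\ell-1)$ (the only remaining $J=S$ possibility) one must prove $L_\ur\subset L_\alpha'$, since otherwise the lemma's statement "unless $L_\ur\not\subset L_\alpha'$" would predict $|J|+2$ while $L_\alpha=L_\alpha'$ has dimension $|J|+1$. The paper settles this by a dimension count when $\ell>2$ (there $\dim H^1=f+1=\dim L_\alpha'$, forcing equality) and by appealing to the peu ramifi\'ee description when $\ell=2$; your argument does not address it.
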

 \begin{proof} Note first that $H^1_f(K_\gp,\ol{\Z}_\ell(\chi_\alpha))$
contains $H^1(K_\gp,\ol{\Z}_\ell(\chi_\alpha))_\tor$ and that
the quotient is free of rank $|J| =\dim H^1_f(K_\gp,\ol{\Q}_\ell(\chi_\alpha))$.
Therefore the natural map
$$ H^1_f(K_\gp,\ol{\Z}_\ell(\chi_\alpha))\otimes_{\ol{\Z}_\ell}\ol{\F}_\ell \to
H^1(K_\gp,\ol{\Z}_\ell(\chi_\alpha))\otimes_{\ol{\Z}_\ell}\ol{\F}_\ell \to
 H^1(K_\gp,\ol{\F}_\ell(\chi_1\chi_2^{-1}))$$
is injective and its image $L_\alpha'$ has dimension $|J|$, unless
$\chi_1=\chi_2$, in which case the dimension is $|J|+1$.
If $\chi_1\neq \chi_2$, then the
lemma is now immediate from the definition of $L_\alpha$ and,
in the cyclotomic $J =S$ case, the local Euler
characteristic formula.  If $\chi_1 = \chi_2$, then one also
needs to note the following:
\begin{itemize}
\item If $J=S$ and $\vec{b}\neq(\ell,\ldots,\ell)$, then $\vec{b}=(\ell-1,\ldots,\ell-1)$
 and $L_\ur \subset L_\alpha'$, for dimension reasons if $\ell > 2$ and by Remark~\ref{rmk:peuramifie}
 if $\ell = 2$.
\item If $\vec{b}=(\ell,\ldots,\ell)$, then $\ell=2$ and $J$ is either $S$ or $\emptyset$.
  In the latter case, $L_\alpha'$ is spanned by the homomorphism $G_{K_\gp} \to \ol{\F}_2$ defined
  by the reduction of $(\omega^2-1)/8$, where $\omega$ is the $2$-adic cyclotomic
  character.  So in this case $L_\ur \not\subset L_\alpha'$.
\end{itemize}
\end{proof}

If $\rho|_D \sim\begin{pmatrix}\chi_1&{*}\\0&\chi_2\end{pmatrix}$ and
$c_\rho$ is the corresponding class in $H^1(K_\gp,\ol{\F}_\ell(\chi_1\chi_2^{-1}))$,
we now define 
\begin{equation}\label{eqn:red}
W_\gp(\rho)= \{\, V\, |\, \exists J\mbox{\ with\ } c_\rho \in L_\alpha \mbox{\ for $\alpha = (V,J) \in W'(\chi_1,\chi_2)$}\,\}. \end{equation}
Note that if $\rho|_D\sim \chi_1\oplus\chi_2$, or equivalently $c_\rho = 0$,
then $W_\gp(\rho) = \pi_1(W'(\chi_1,\chi_2))$ is independent of the choice of ordering of 
$\chi_1$ and $\chi_2$. Note also in this case that $|W_\gp(\rho)|$
has size approximately $2^f$, as
in the irreducible case (\ref{eqn:irred}).  Moreover if $\rho$ is reducible,
one knows by \cite{breuil_new}, Prop~A.3 or \cite{cd_new}, Thm.~7.8 that
$|W_\gp(\rho)| = 2^d$ for some $d\in\{0,\ldots,f\}$ depending on $c_\rho$, 
provided $\chi_1\chi_2^{-1}$
is {\em generic} in the sense that its restriction to $I_{K_\gp}$ is of the form
$\prod_{\tau\in S}\omega_\tau^{b_\tau}$ with $b_\tau \in \{1,\ldots,\ell-2\}$
for all $\tau\in S$ and $\vec{b} \not\in \{(1,\ldots,1),(\ell-2,\ldots,\ell-2)\}$.

Finally we remark that it is shown in \cite{fred:durham} that in the
cases where $\rho|_D$ is semisimple, the
set $W_\gp(\rho)$ is related to the set of Jordan-H\"older constituents of the reduction
of a corresponding irreducible characteristic zero representation of $\GL_2(k)$.

\subsection{Basic properties of the definition.}
The set $W_\gp(\rho)$ was defined in terms of the restriction of $\rho$
to $G_{K_\gp}$. We now check that it is in fact non-empty and
depends only on the restriction to inertia.
\begin{proposition}\label{prop:inertia}
If $\rho:G_K \to \GL_2(\ol{\F}_\ell)$ is
continuous, irreducible and totally odd, then
$W_\gp(\rho)$ is non-empty and
depends only on $\rho|_{I_{K_\gp}}$.
\end{proposition}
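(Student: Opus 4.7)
I would split the argument by whether $\rho|_D$ is irreducible or reducible.  In the irreducible case, the definition~(\ref{eqn:irred}) of $W_\gp(\rho)$ is expressed entirely in terms of the fundamental characters $\omega_\tau,\omega_{\tau'}$ of $I$ and the labelled exponents appearing in $\rho|_I$, so dependence on only $\rho|_I$ is manifest.  For non-emptiness it suffices to show $|W'(\xi)|>0$ where $\xi|_I=(\omega')^n$; since irreducibility of $\rho|_D$ forces $n\not\equiv 0\pmod{\ell^f+1}$, Propositions~\ref{prop:irred} and~\ref{prop:irred2} deliver this (the worst case, $2^f-3$ for $\ell=2$ and $f$ odd, is still positive for $f\geq3$, and the few small-$f$ cases are checked directly).

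In the reducible case I would exhibit a single $\alpha\in W'(\chi_1,\chi_2)$ for which $L_\alpha$ fills out the whole of $H^1(K_\gp,\ol{\F}_\ell(\chi_1\chi_2^{-1}))$; then $c_\rho\in L_\alpha$ is automatic.  Take $B=\{0,\dots,f-1\}$, so that $J=S$; the combinatorial analysis preceding Proposition~\ref{prop:irred} produces a matching $(a,\vec{b})$ (choosing $\vec{b}=(\ell,\dots,\ell)$ in the exceptional trivial/cyclotomic sub-cases).  Lemma~\ref{lem:dim}, combined with the local Euler characteristic formula for $\dim H^1(K_\gp,\ol{\F}_\ell(\chi_1\chi_2^{-1}))$, then forces $\dim L_\alpha=\dim H^1$: in the generic regime both sides equal $f$, while the \emph{ad hoc} enlargements of $L_\alpha$ in the trivial case (addition of $L_\ur$) and the cyclotomic case (setting $L_\alpha=H^1$ when $\vec{b}=(\ell,\dots,\ell)$) are designed precisely so that $L_\alpha$ coincides with $H^1$ of dimension $f+1$ in the exceptional cases.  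Hence $L_\alpha=H^1$ and $c_\rho\in L_\alpha$ unconditionally.

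For inertia-dependence in the reducible case, note first that $W'(\chi_1,\chi_2)$ depends only on $\chi_i|_I$.  If $\rho'$ satisfies $\rho'|_I=\rho|_I$, then $\chi_i'=\chi_i\eta_i$ for unramified $\eta_i$, and twist by $\eta_1\eta_2^{-1}$ gives a canonical isomorphism $H^1(K_\gp,\ol{\F}_\ell(\chi_1\chi_2^{-1}))\cong H^1(K_\gp,\ol{\F}_\ell(\chi_1'\chi_2'^{-1}))$.  I would check that this isomorphism carries $L_\alpha$ to $L_{\alpha'}$---invoking Remark~\ref{rmk:indep} in the generic range $b_\tau<\ell$ and dimension arguments from Lemma~\ref{lem:dim} to handle the extremal $b_\tau=\ell$ cases where $L_\alpha$ has been defined to be all of $H^1$---and carries $c_\rho$ to $c_{\rho'}$, the latter because both classes restrict to the same element of $H^1(I,\ol{\F}_\ell(\chi_1\chi_2^{-1}|_I))$ and the kernel of restriction, the unramified classes, is compatibly transported.

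The main obstacle is the \emph{overlap} case where $\rho|_I$ is scalar $\psi\oplus\psi$: here $\rho|_D$ may be irreducible (induced from a $\xi$ with $\xi|_I=\xi^{\mathrm{Frob}}|_I=\psi$ but $\xi\neq\xi^{\mathrm{Frob}}$, which forces $\ell$ odd) or reducible with $\chi_1|_I=\chi_2|_I=\psi$, and we need the two recipes to yield the same set of $V_{\vec{a},\vec{b}}$.  Writing $\psi=\omega^m$ so that $n=m(\ell^f+1)$, the matching reduces to a direct combinatorial check, carried out by separately reducing the congruence $n\equiv n'_{a,\vec{b},B}\pmod{\ell^{2f}-1}$ modulo $\ell^f-1$ and modulo $\ell^f+1$, and comparing with the two congruences extracted from $\chi_1|_I=\chi_2|_I=\omega^m$.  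On the reducible side one also has to confirm that $c_\rho\in L_\alpha$ for a suitable $\alpha$ in this scalar regime (which for split $\rho|_D$ is automatic, and for non-split $\rho|_D$ again follows from the $L_\alpha=H^1$ argument above).  This verification is the most delicate piece of the proof.
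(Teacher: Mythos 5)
Your treatment of non-emptiness and of the irreducible case matches the paper's, but the ``overlap case'' you single out as the most delicate step is in fact vacuous, and misidentifying it as the crux is a genuine gap.  If $\rho|_D$ is irreducible then $\rho|_D\cong\Ind_{D'}^D\xi$, and a character $\xi$ of $D'=G_{K'_\gp}$ is determined by $\xi|_I$ together with its value on a Frobenius lift (via $D'^{\mathrm{ab}}\cong\widehat{(K'_\gp)^\times}\cong\CO_{K'_\gp}^\times\times\hat\Z$).  Frobenius-conjugation fixes the unramified part always (since $G_{K_\gp}^{\mathrm{ab}}$ is abelian, so conjugation by $\frob$ acts trivially on $D'/I$), so if $\xi|_I$ is $\frob$-fixed then $\xi=\xi^{\frob}$ and $\Ind\xi$ is reducible.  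Thus $\rho|_I$ scalar forces $\rho|_D$ reducible; there is no $\xi$ with $\xi|_I=\xi^{\frob}|_I$ but $\xi\neq\xi^{\frob}$, for any $\ell$.  This is precisely why the paper opens the inertia-dependence argument with the observation that irreducibility of $\rho|_{G_{K_\gp}}$ is determined by $\rho|_{I_{K_\gp}}$ (citing Edixhoven \S2.4), and this is the statement you should establish in place of the spurious combinatorial matching.

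For the reducible inertia-dependence, your twist idea is the right one but is stated too loosely.  From $\rho|_I=\rho'|_I$ one does not immediately get that $\rho'$ is upper-triangular with diagonal characters that are unramified twists of $\chi_1,\chi_2$; rather, the function $g\mapsto\rho'(g)\rho(g)^{-1}$ on $G_{K_\gp}/I_{K_\gp}$ takes values in $Z(\rho(I_{K_\gp}))$, and the shape of that centraliser (scalars, diagonal, or upper-triangular unipotent times scalars) dictates what $\rho'$ can look like.  The paper runs through the four cases $\chi_1\chi_2^{-1}$ ramified/unramified and $c_\rho|_I$ trivial/non-trivial accordingly.  In particular, when $\chi_1\chi_2^{-1}$ is unramified the class $c_{\rho'}$ is \emph{not} equal to $c_\rho$ in general but only differs by an element of $L_\ur$ (and only when $\chi_1=\chi_2$), so the conclusion relies on $L_\ur\subset L_\alpha$ in exactly those cases; your phrase about the kernel of restriction being ``compatibly transported'' does gesture at this, but you should make the case distinction explicit rather than invoking Remark~\ref{rmk:indep}, which concerns independence of the choice of lift, not independence of the global extension of $\rho|_I$.
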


\begin{proof} 
We first prove that $W_\gp(\rho)\neq \emptyset.$
If $\rho|_{G_{K_\gp}}$ is irreducible,
then it is induced from a character $\xi$, and
Propositions~\ref{prop:irred} and~\ref{prop:irred2} show that 
$W'(\xi)$ is non-empty, and hence so is $W_\gp(\rho)$
(note that if $\ell^f = 2$, then $n$ is not divisible by $3$).
If $\rho|_{G_{K_\gp}}$ is reducible, then it is of the
form $\begin{pmatrix}\chi_1&{*}\\0&\chi_2\end{pmatrix}$, and we showed
that the projection map $W'(\chi_1,\chi_2)\to \{J\subset S\}$
is surjective. In particular, there is an element
$\alpha = (V_{\vec{a},\vec{b}},S) \in W'(\chi_1,\chi_2)$. 
Moreover if $\chi_1\chi_2^{-1}$ is cyclotomic, we may
choose $\vec{b} = (\ell,\ldots,\ell)$, so that in all
cases $L_\alpha = H^1(K_\gp,\ol{\F}_\ell(\chi_1\chi_2^{-1}))$
by Lemma~\ref{lem:dim} and the local Euler characteristic
formula. It follows that $c_\rho \in L_\alpha$ and $V_{\vec{a},\vec{b}}
\in W_\gp(\rho)$.

For the dependence only on inertia, 
first note that the irreducibility of $\rho|_{G_{K_\gp}}$
is determined by $\rho|_{I_{K_\gp}}$ (see for example section~2.4
of~\cite{edixhoven}). In the case that 
$\rho|_{G_{K_\gp}}$ is irreducible, $W_\gp(\rho)$ is
determined by $W'(\xi)$, which depends only on $\xi|_{I_{K_\gp}}$,
which in turn depends only on $\rho|_{I_{K_\gp}}$.

Suppose now that $\rho|_{G_{K_\gp}}$ is reducible and
$\rho':G_K \to \GL_2(\ol{\F}_\ell)$ is such that 
$\rho|_{I_{K_\gp}}\sim \rho'|_{I_{K_\gp}}$. Changing
bases, we may assume $\rho|_{G_{K_\gp}} =
 \begin{pmatrix}\chi_1&{*}\\0&\chi_2\end{pmatrix}$
 and $\rho|_{I_{K_\gp}}= \rho'|_{I_{K_\gp}}$.
Note that the function $G_{K_\gp}/I_{K_\gp}\to \GL_2(\ol{\F}_\ell)$
 defined by $g \mapsto \rho'(g)\rho(g)^{-1}$ takes values in
 $Z(\rho(I_{K_\gp}))$. We divide the proof into cases
 according to the possible centralisers.
 
 Suppose first that $\chi_1\chi_2^{-1}$ is ramified and $c_\rho $ has 
 non-trivial restriction to $I_{K_\gp}$. In this case
$\rho|_{I_{K_\gp}}$ is indecomposable and has centraliser
 consisting only of the scalar matrices. It follows that $\rho' 
 =\psi\rho$ for some unramified character $\psi:G_{K_\gp} \to
 \ol{\F}_\ell^\times$. Since $W'(\chi_1,\chi_2)$ depends only
 on the restriction of $\chi_1$ and $\chi_2$ to $I_{K_\gp}$,
 we have $W'(\psi\chi_1,\psi\chi_2)=W'(\chi_1,\chi_2)$. Moreover the subspaces
 $L_\alpha$ of $H^1(K_\gp,\ol{\F}_\ell(\chi_1\chi_2^{-1}))$
 do not change if $\rho$ is replaced by an unramified twist,
 nor does the class $c_\rho$. It
 follows that $W_\gp(\rho') = W_\gp(\rho)$.
 
 Next suppose that $\chi_1\chi_2^{-1}$ is ramified and 
 $c_\rho$ has trivial restriction to $I_{K_\gp}$. Then in fact
 $c_\rho = 0$, so we may assume $\rho|_{G_{K_\gp}} = 
 \begin{pmatrix}\chi_1&{0}\\0&\chi_2\end{pmatrix}$. In this case the
 centraliser of $\rho(I_{K_\gp})$ consists of the diagonal matrices.
It follows that $\rho'|_{G_{K_\gp}} = 
 \begin{pmatrix}\psi_1\chi_1&{0}\\0&\psi_2\chi_2\end{pmatrix}$
 for some unramified characters $\psi_1$ and $\psi_2$.
Hence $c_{\rho'} = 0$ and $W'(\psi_1\chi_1,\psi_2\chi_2)
= W'(\chi_1,\chi_2)$, so $W_\gp(\rho') = W_\gp(\rho)$.

Next suppose that $\chi_1\chi_2^{-1}$ is unramified and $c_\rho$
has non-trivial restriction to $I_{K_\gp}$. In this case we have
$$Z(\rho(I_{K_\gp})) = \left\{\, \left.
 \begin{pmatrix}x&{xy}\\0&x \end{pmatrix}\,\right|\,x\in\ol{\F}_\ell^\times,
 y\in\ol{\F}_\ell\,\right\},$$
so if $g \in G_{K_\gp}$, then 
$$\rho'(g) = 
 \begin{pmatrix}1&\mu(g)\\0&1\end{pmatrix}\psi(g)\rho(g)$$
 for some unramified character $\psi$ and cocycle 
 $\mu:G_{K_\gp}/I_{K_\gp} \to \ol{\F}_\ell(\chi_1\chi_2^{-1})$.
In particular, $\rho'\sim \begin{pmatrix}\chi_1'&{*}\\0&\chi_2'\end{pmatrix}$
with $\chi'_1(\chi_2')^{-1} = \chi_1\chi_2^{-1}$.  Moreover
 if $\chi_1 \neq \chi_2$, then $c_{\rho'} = c_\rho$
 in $H^1(K_\gp,\ol{\F}_\ell(\chi_1\chi_2^{-1}))$, and if $\chi_1 = \chi_2$,
 then $c_{\rho'} - c_\rho \in L_\ur$. Since the spaces $L_\alpha$ are
 the same for $\rho$ and $\rho'$ and contain $L_\ur$ if $\chi_1 = \chi_2$,
 we conclude that $W_\gp(\rho') = W_\gp(\rho)$.
 
 Finally suppose that $\chi_1\chi_2^{-1}$ is unramified and
 $c_\rho$ has trivial restriction to $I_{K_\gp}$, so
 $$\rho'(g) = \rho(g) = 
 \begin{pmatrix}\chi_1(g)&0\\0&\chi_1(g)\end{pmatrix}$$ 
 for $g \in I_{K_\gp}$. Note that
 $c_\rho = 0$ unless $\chi_1 = \chi_2$, in which case
 $c_\rho \in L_\ur$, and similarly for $c_{\rho'}$.
 It follows that $W_\gp(\rho') = \pi_1(W'(\chi_1',\chi_2'))
 = \pi_1(W'(\chi_1,\chi_2)) = W_\gp(\rho)$. 
\end{proof}

We are now ready to state the weight conjecture.
Recall that $W(\rho)$ 
is defined as the set of representations of $\prod_{\gp|\ell}\GL_2(k_\gp)$
of the form $\otimes_{\ol{\F}_\ell} V_\gp$
with each $V_\gp \in W_{\gp}(\rho)$.
By the preceding proposition $W(\rho)$ is non-empty and
depends only on the restrictions of $\rho$ to inertia
groups at primes over $\ell$.
\begin{conjecture} \label{conj:weight}
If $\rho:G_K \to \GL_2(\ol{\F}_\ell)$ is modular, then
$$W(\rho) = \{\,V\,|\,\rho \mbox{ is modular of weight $V$}\}.$$
\end{conjecture}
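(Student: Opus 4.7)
The plan is to prove the conjecture by establishing the two inclusions separately, treating the ``necessity'' direction ($\rho$ modular of weight $V \Rightarrow V \in W(\rho)$) by $p$-adic Hodge theory applied to crystalline lifts, and the ``sufficiency'' direction ($V \in W(\rho) \Rightarrow \rho$ modular of weight $V$) by weight-shifting arguments combined with known cases of the weak conjecture.

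For the necessity direction, suppose $\rho$ is modular of Serre weight $V = \otimes V_\gp$. By Proposition~\ref{prop:types} and Lemma~\ref{gl1}(2), we may twist so that $\rho \sim \overline{\rho}_\pi$ for some automorphic $\pi$ of weight $(\vec{k},w)$ where $k_\tau = b_\tau + 1$ is determined by $V$, and of level prime to $\ell$ (up to a controlled type at $\ell$). Then $\rho_\pi|_{G_{K_\gp}}$ is crystalline with labelled Hodge--Tate weights in $[0,\ell]$ computable from $V_\gp$; the analysis splits according to whether $\rho|_D$ is irreducible or reducible. In the irreducible case, Lemma~\ref{lem:inertia} directly forces the form of $\rho|_{I_{K_\gp}}$ required by~(\ref{eqn:irred}). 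In the reducible case, one must identify the extension class $c_\rho$ with an element of $L_\alpha'$ for $\alpha = (V_{\vec a,\vec b},J) \in W'(\chi_1,\chi_2)$; this is exactly what Fontaine--Laffaille theory provides when all $b_\tau < \ell$, by computing the reductions of the two-dimensional crystalline lifts of prescribed Hodge--Tate weights. The boundary cases $b_\tau = \ell$ (including peu/tr\`es ramifi\'ees distinctions) require the delicate modifications of $L_\alpha$ made in the definition, and match the behaviour at the boundary of Fontaine--Laffaille theory as supported by the numerical evidence in~\cite{ddr}.

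For the sufficiency direction, start from any single modular weight $V_0$ (which exists by Corollary~\ref{cor:equiv} given the hypothesis that $\rho$ is modular) and show that every $V \in W(\rho)$ can be reached by a sequence of \emph{weight-shifting} congruences. The strategy is to find, for each target $V \in W(\rho)$, a cuspidal $\pi$ such that $\overline\rho_\pi \cong \rho$, $\pi$ has weight $(\vec k,w)$ corresponding to $V$, and $\rho_\pi|_{G_{K_\gp}}$ is a crystalline lift realising the extension data encoded by $L_\alpha$ for the appropriate $\alpha = (V,J)$. When $K = \Q$, this can be extracted from the companion forms theorems of Gross, Coleman--Voloch, and Edixhoven, together with the full Serre conjecture of Khare--Wintenberger, which is how Theorem~\ref{thm:Qconj} is established. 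For general totally real $K$, one argues via potentially diagonalisable lifts and Kisin/Gee-style modularity lifting: given a candidate crystalline lift on the Galois side (constructed in the local deformation ring of $\rho|_{G_{K_\gp}}$, whose components correspond precisely to the weights in $W_\gp(\rho)$), deform $\rho$ globally to a lift of the requisite Hodge--Tate type and then invoke a modularity lifting theorem to conclude that this global lift is automorphic of the desired weight.

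The hard part is this sufficiency direction, and within it the truly difficult obstacle is realising \emph{all} weights in $W_\gp(\rho)$ at primes where $\rho|_{G_{K_\gp}}$ is reducible and non-split: the conjecture predicts that the set of achievable weights is cut out precisely by the line $c_\rho$ inside $H^1(K_\gp,\ol{\F}_\ell(\chi_1\chi_2^{-1}))$ through its membership in the subspaces $L_\alpha$, so one must show that crystalline lifts of \emph{exactly} these types exist and are automorphic. This is fundamentally a question about the geometry of local crystalline deformation rings at the boundary of Fontaine--Laffaille range, where $b_\tau$ can equal $\ell$, and it is precisely the regime where the relevance of the peu/tr\`es ramifi\'e distinction and the socle description of the mod $\ell$ local Langlands correspondence (cf.\ Breuil--Paskunas~\cite{breuil_paskunas}) become decisive. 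I would expect a complete proof to require Gee's approach from~\cite{gee:duke,gee:ppt} substantially pushed beyond its current state, with the unramified-at-$\ell$ case of the weight part being the first natural target.
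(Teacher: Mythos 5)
The statement you are asked to prove is labelled as a conjecture in the paper (Conjecture~\ref{conj:weight}), and the paper contains no proof of it in general.  What the paper does contain is a proof of the special case $K=\Q$ (Theorem~\ref{thm:Qconj}) and a proof (Proposition~\ref{indefinite:weight}) that the stronger local-global compatibility Conjecture~\ref{conj:indefinite} implies Conjecture~\ref{conj:weight}.  So there is no target proof against which to measure your argument; by your own account at the end, what you have written is a strategy sketch with explicit open gaps, and its status is precisely that of the conjecture itself, not a proof.

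That said, your sketch agrees with the motivation the authors give for their definition of $W_\gp(\rho)$ (expected compatibility with Fontaine--Laffaille theory on reductions of crystalline lifts), and your account of the $K=\Q$ case correctly identifies the ingredients of Theorem~\ref{thm:Qconj}: the Deligne--Fontaine description of weight-two, level-prime-to-$\ell$ representations via finite flat group schemes, multiplication by the Hasse invariant, Edixhoven's lemma, Mazur's Principle, and Gross's companion forms theorem, all organised by the explicit $f=1$ case analysis of $W(\rho)$.  One correction to your description of that proof: Khare--Wintenberger is not invoked there, because the conjecture takes ``$\rho$ is modular'' as a hypothesis, so only the equivalence of the weak and refined conjectures is at stake, not the weak conjecture itself.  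For general $K$, the step you yourself flag as the genuine obstacle --- producing, for each $\alpha=(V,J)$ with $c_\rho\in L_\alpha$, a \emph{global} crystalline lift of $\rho$ of the prescribed Hodge--Tate type and establishing its automorphy, especially at the boundary $b_\tau=\ell$ where the peu/tr\`es ramifi\'e distinction and the modifications of $L_\alpha$ enter --- is exactly what was not available when this paper was written and is why the result is stated as a conjecture, with Gee's work \cite{gee:duke,gee:ppt} cited as a promising but incomplete attack.  Your proposal should be read as a correct description of the rationale for the conjecture, not as a proof of it.
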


We now check compatibility of the conjectural weight set
with twists and determinants.
\begin{proposition} \label{prop:chars}
Suppose that $\rho:G_K \to \GL_2(\ol{\F}_\ell)$ is
continuous, irreducible and totally odd.
\begin{enumerate}
\item 
Let $\chi:G_K \to \ol{\F}_\ell^\times$ be such that
$\chi|_{I_{K_\gp}}= \prod_{\tau\in S_\gp}\omega_\tau^{c_\tau}$
for each $\gp|\ell$. Then $V \in W(\rho)$ if and only 
$V\otimes V_\chi\in W(\chi\rho)$, where
$$V_\chi = \otimes_{\gp|\ell} \otimes_{\tau\in S_\gp} \det{}^{c_\tau}
\otimes_\tau\ol{\F}_\ell$$
is an $\overline{\F}_\ell$-valued character of $G$.
\item 
If $V\in W(\rho)$ and $V$ has central character $\otimes_{\gp|\ell}
\prod_{\tau\in S_\gp} \tau^{c_\tau}$,
then $\det\rho|_{I_{K_\gp}}= \prod_{\tau\in S_\gp}\omega_\tau^{c_\tau+1}$
for each $\gp|\ell$.
\end{enumerate}
\end{proposition}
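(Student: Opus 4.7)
The plan is to reduce both parts to computations at a single prime $\gp\mid\ell$ and then to unwind the explicit recipes~(\ref{eqn:irred}) and~(\ref{eqn:red}) defining $W_\gp(\rho)$. Note that $W(\rho)=\prod_{\gp\mid\ell}W_\gp(\rho)$ and $V_\chi$ factors as $\bigotimes_{\gp\mid\ell}V_\chi^\gp$ with $V_\chi^\gp=\bigotimes_{\tau\in S_\gp}\det^{c_\tau}$, so we may work one prime at a time. The central tool is the identity
$$\omega_\tau|_I=(\omega_{\tau'_1}\omega_{\tau'_2})|_I$$
for $\tau\in S_\gp$ and its two lifts $\tau'_1,\tau'_2\in S'_\gp$, which reflects the compatibility of local class field theory with the norm $N_{k'_\gp/k_\gp}$. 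In particular $\chi|_I=\prod_{\tau\in S_\gp}\omega_\tau^{c_\tau}=\prod_{\tau'\in S'_\gp}\omega_{\tau'}^{c_{\pi(\tau')}}$.

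For part~(1), I fix $\gp$ and treat two cases according to the structure of $\rho|_D$. If $\rho|_D\cong\Ind_{D'}^D\xi$ is irreducible, then $(\chi\rho)|_D\cong\Ind_{D'}^D(\chi|_{D'}\cdot\xi)$, and comparing (\ref{eqn:irred}) for $\xi|_I$ and $(\chi\xi)|_I$ shows that $V_{\vec a,\vec b}$ occurs in $W_\gp(\rho)$ with index set $J$ if and only if $V_{\vec a+\vec c,\vec b}$ occurs in $W_\gp(\chi\rho)$ with the same $J$; since $V_{\vec a+\vec c,\vec b}=V_{\vec a,\vec b}\otimes V_\chi^\gp$, this is the desired equivalence. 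If $\rho|_D\sim\smallmat{\chi_1}{*}{0}{\chi_2}$, then $(\chi\rho)|_D\sim\smallmat{\chi\chi_1}{*}{0}{\chi\chi_2}$, and because the ratio character is unchanged, the extension class $c_\rho\in H^1(K_\gp,\ol{\F}_\ell(\chi_1\chi_2^{-1}))$ corresponds canonically to $c_{\chi\rho}$. The recipe (\ref{eqn:red}) shows that $(V_{\vec a,\vec b},J)\in W'(\chi_1,\chi_2)$ if and only if $(V_{\vec a+\vec c,\vec b},J)\in W'(\chi\chi_1,\chi\chi_2)$; moreover the crystalline lift $\chi_\alpha$ of $\chi_1\chi_2^{-1}$ and its Hodge--Tate weights depend only on $(\vec b,J)$, and the special two-case modification of $L_\alpha$ also only depends on $\chi_1\chi_2^{-1}$ and $(\vec b,J)$. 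Hence the subspaces $L_\alpha$ match under the bijection, so $c_\rho\in L_\alpha$ iff $c_{\chi\rho}\in L_{\alpha'}$, completing the check.

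For part~(2), I compute $\det\rho|_{I_{K_\gp}}$ directly from the two recipes. In the irreducible case, (\ref{eqn:irred}) gives
$$\det\rho|_I=\prod_{\tau\in S_\gp}\omega_\tau^{2a_\tau}\cdot\prod_{\tau'\in S'_\gp}\omega_{\tau'}^{b_{\pi(\tau')}}=\prod_{\tau\in S_\gp}\omega_\tau^{2a_\tau+b_\tau},$$
using the norm identity to collect the $S'_\gp$-product into an $S_\gp$-product. In the reducible case, (\ref{eqn:red}) gives $\det\rho|_I=(\chi_1\chi_2)|_I=\prod_\tau\omega_\tau^{2a_\tau+b_\tau}$ directly. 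Finally, the central character of $V_{\vec a,\vec b}$ sends a scalar $t\cdot I$ to $\prod_\tau\tau(t)^{2a_\tau+b_\tau-1}$ (the $\det^{a_\tau}$ factor contributes $\tau(t)^{2a_\tau}$ and the $\Symm^{b_\tau-1}$ factor contributes $\tau(t)^{b_\tau-1}$), so the hypothesis forces $c_\tau=2a_\tau+b_\tau-1$, and combining with the above yields $\det\rho|_{I_{K_\gp}}=\prod_\tau\omega_\tau^{c_\tau+1}$.

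There is no real obstacle, only careful bookkeeping. The main thing to watch out for is the correct translation between fundamental characters indexed by $S_\gp$ and $S'_\gp$ via the identity $\omega_\tau=\omega_{\tau'_1}\omega_{\tau'_2}$ on inertia, and, in the reducible case, checking that the extension class $c_\rho$ and the subspaces $L_\alpha$ depend only on data (namely the ratio $\chi_1\chi_2^{-1}$ and the pair $(\vec b,J)$) that is preserved under twisting by $\chi$.
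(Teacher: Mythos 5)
Your proof takes essentially the same route as the paper's: reduce to a single prime $\gp\mid\ell$, split into the irreducible and reducible cases for part~(1) (observing that twisting translates $\vec a$ by $\vec c$, leaves the ratio $\chi_1\chi_2^{-1}$, the lift $\chi_\alpha$, the subspaces $L_\alpha$, and the extension class unchanged), and for part~(2) compute $\det\rho|_{I_{K_\gp}}=\prod_\tau\omega_\tau^{2a_\tau+b_\tau}$ from the recipes and compare central characters. One small imprecision in part~(2): the central character hypothesis only gives the congruence $\sum_i c_{\tau_i}\ell^i\equiv\sum_i(2a_{\tau_i}+b_{\tau_i}-1)\ell^i\bmod \ell^{f_\gp}-1$ (equality of characters of $k_\gp^\times$), not the integer equality $c_\tau=2a_\tau+b_\tau-1$ that you assert; but since $\prod_\tau\omega_\tau^{m_\tau}$ depends only on $\sum_i m_{\tau_i}\ell^i\bmod\ell^{f_\gp}-1$, the congruence already yields $\prod_\tau\omega_\tau^{c_\tau+1}=\prod_\tau\omega_\tau^{2a_\tau+b_\tau}$, so the conclusion is unaffected. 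The paper handles this by adding $\sum_i\ell^i$ to both sides of the congruence explicitly.
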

\begin{proof} To prove the first assertion it suffices to show that
$V \in W_\gp(\rho)$ if and only if $V\otimes V_{\chi_\gp}\in W_\gp(\chi\rho)$
where $\chi_\gp = \chi|_{G_{K_\gp}}$ and $V_{\chi _\gp} = 
\otimes_{\tau\in S_\gp} \det{}^{c_\tau}
\otimes_\tau\ol{\F}_\ell$. 
If $\rho|_{G_{K_\gp}}\sim\Ind_{D'}^D\xi$ is irreducible, 
then $(V,J) \in W'(\xi)$ if and only $(V\otimes V_{\chi_\gp},J)
\in W'(\xi\chi_\gp)$, yielding the assertion in this case.
If $\rho|_{G_{K_\gp}}\sim
 \begin{pmatrix}\chi_1&{*}\\0&\chi_2\end{pmatrix}$ is reducible, 
then $\alpha = (V,J) \in W'(\chi_1,\chi_2)$ if and only 
$\alpha' = (V\otimes V_{\chi_\gp},J)\in W'(\chi_1\chi_\gp,\chi_2\chi_\gp)$.
Moreover, since $V_{\vec{a},\vec{b}} \otimes V_{\chi_\gp}$ is of the form
$V_{\vec{a'},\vec{b}}$, we see that $\chi_\alpha = \chi_{\alpha'}$,
 so $L_\alpha = L_{\alpha'} \subset 
 H^1(K_\gp,\ol{\F}_\ell(\chi_1\chi_2^{-1}))$. Since also
 $c_\rho = c_{\chi\rho}$, we get the assertion in this case
 as well.
 
To prove the second assertion, we can again work locally at
primes $\gp|\ell$.
Writing $V = \otimes_{\gp|\ell} V_\gp$, we have $V_\gp \in W_\gp(\rho)$
for each $\gp | \ell$. If $V_\gp = V_{\vec{a},\vec{b}}$, this
gives $\det\rho|_{I_{K_\gp}} =
\prod_{\tau\in S_\gp} \omega_\tau^{2a_\tau + b_\tau}$. Since
 $V_{\vec{a},\vec{b}}$ has central character $
\prod_{\tau\in S_\gp} \tau^{2a_\tau + b_\tau - 1}$, 
we have
$$\sum_{i=0}^{f_\gp - 1} c_{\tau_i} \ell^i \equiv 
\sum_{i=0}^{f_\gp - 1} (2a_{\tau_i} + b_{\tau_i} -1)\ell^i
\bmod (\ell^{f_\gp} - 1).$$
Adding $\sum_{i=0}^{f_\gp-1} \ell^i$ to each side
of the congruence, we deduce that 
$\prod_{\tau\in S_\gp}\omega_\tau^{c_\tau+1}
= \prod_{\tau\in S_\gp}\omega_\tau^{2a_\tau+b_\tau}.$
\end{proof}

Combining the first part of the proposition with
Corollary~\ref{gl1}(b), we deduce the following:
\begin{corollary} \label{cor:twist}
Suppose that $\rho:G_K \to \GL_2(\ol{\F}_\ell)$ is
continuous, irreducible and totally odd and
$\chi:G_K \to \ol{\F}_\ell^\times$ is a character.
Then Conjecture~\ref{conj:weight} holds for $\rho$
if and only if it holds for $\chi\rho$.
\end{corollary}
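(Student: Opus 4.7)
The approach is to chain together the two ``twist-compatibility'' results the paper has already proved: Proposition~\ref{prop:chars}(1), which says $V\in W(\rho)$ if and only if $V\otimes V_\chi\in W(\chi\rho)$, and Corollary~\ref{gl1}(2), which gives the analogous statement for actual modular weights, namely $\rho$ is modular of weight $V$ if and only if $\chi\rho$ is modular of weight $V\otimes V_\chi$. Both results require that $\chi|_{I_{K_\gp}}=\prod_{\tau\in S_\gp}\omega_\tau^{c_\tau}$ for each $\gp\mid\ell$, so the first step of the argument is to check that this form of $\chi$ on inertia is automatic for any continuous character $\chi:G_K\to\overline{\F}_\ell^\times$.

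For this, continuity forces the image of $\chi$ to lie in the torsion subgroup of $\overline{\F}_\ell^\times$, which has order prime to $\ell$. Local class field theory (with the normalization fixed in the paper) identifies $\chi|_{I_{K_\gp}}$ with a character of $\CO_{K_\gp}^\times$; the decomposition $\CO_{K_\gp}^\times\cong k_\gp^\times\times(1+\gp\CO_{K_\gp})$ with pro-$\ell$ second factor forces any character of prime-to-$\ell$ order to factor through $k_\gp^\times$, and every $\overline{\F}_\ell^\times$-valued character of $k_\gp^\times$ is visibly of the form $\prod_{\tau\in S_\gp}\omega_\tau^{c_\tau}$. So the hypothesis on $\chi$ is unconditional.

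With this in hand the conclusion is formal. The operation $V\mapsto V\otimes V_\chi$ is a bijection on the set of Serre weights, with inverse $V\mapsto V\otimes V_{\chi^{-1}}$. By Proposition~\ref{prop:chars}(1) this bijection carries $W(\rho)$ onto $W(\chi\rho)$, and by Corollary~\ref{gl1}(2) the same bijection carries $\{V\mid\rho\text{ modular of weight }V\}$ onto $\{V\mid\chi\rho\text{ modular of weight }V\}$. Moreover $\chi\rho$ is again continuous, irreducible, and totally odd (for the last point, $\chi(c)^2=\chi(c^2)=1$ at any complex conjugation $c$), so Conjecture~\ref{conj:weight} is meaningfully stated for it. Since the bijection intertwines the two sets appearing in the conjecture, their equality for $\rho$ is equivalent to their equality for $\chi\rho$. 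There is no real obstacle in this argument; the only substantive point is the automatic form of $\chi$ on inertia, and that follows from elementary local class field theory.
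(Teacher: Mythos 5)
Your proof is correct and follows exactly the route the paper indicates, namely combining Proposition~\ref{prop:chars}(1) with Corollary~\ref{gl1}(2) to transport both sides of the equality in Conjecture~\ref{conj:weight} across the bijection $V\mapsto V\otimes V_\chi$. The one thing you add that the paper leaves implicit is the verification that any continuous $\chi:G_K\to\ol{\F}_\ell^\times$ automatically has $\chi|_{I_{K_\gp}}$ of the required form $\prod_{\tau\in S_\gp}\omega_\tau^{c_\tau}$; your argument for this (finite prime-to-$\ell$ image, hence tame on inertia, hence factoring through $k_\gp^\times$ under local class field theory) is right and a worthwhile detail to record.
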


In the case $K=\Q$, Conjecture~\ref{conj:weight} follows from known results on Serre's Conjecture
(see~\cite{khare:pp}, Thm.~5 and \cite{ash1}, \S2 for similar statements along these lines).
\begin{theorem} \label{thm:Qconj}
Conjecture~\ref{conj:weight} holds if $K = \Q$.
\end{theorem}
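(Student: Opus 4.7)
The plan is to match the local sets $W_\ell(\rho)$ against the recipe of Serre and Edixhoven, and then to invoke the now-complete refined form of Serre's conjecture over $\Q$. By Corollary~\ref{cor:twist} the statement is twist-invariant, and by the theorem of Khare and Wintenberger~\cite{kw:serre1, kw:serre2} together with Corollary~\ref{cor:equiv}, the representation $\rho$ is modular of some Serre weight; in particular both sides of the claimed equality are non-empty.

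First I would unwind the definition of $W_\ell(\rho)$ explicitly when $K = \Q$. Here $f=1$, so $S$ is a singleton and a Serre weight is simply $V_{a,b} = \det{}^a \otimes \Symm^{b-1}\F_\ell^2 \otimes \ol{\F}_\ell$ with $0 \le a \le \ell-2$ and $1 \le b \le \ell$. When $\rho|_{G_{\Q_\ell}}$ is irreducible, the enumeration of \S3.1 collapses to a pair of candidate weights related by the classical companion involution $(a,b) \mapsto (a+b-1,\ell+1-b)$, matching Edixhoven's recipe. When $\rho|_{G_{\Q_\ell}}$ is reducible, $W'(\chi_1,\chi_2)$ recovers Serre's list of candidates, and the subspaces $L_\alpha$ of Lemma~\ref{lem:dim} impose precisely the \emph{peu}/\emph{tr\`es ramifi\'ee} dichotomy of~\cite{serre:duke} at the boundary $\vec b = (\ell)$ in the cyclotomic case (cf.\ Remark~\ref{rmk:peuramifie}), together with the usual companion-forms condition when $\chi_1 = \chi_2$.

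With this dictionary in place, Conjecture~\ref{conj:weight} for $K = \Q$ becomes exactly the assertion that the set of Serre weights of modular forms giving rise to $\rho$ coincides with the refined set of Serre--Edixhoven. The direction ``$\rho$ modular of weight $V \Rightarrow V \in W(\rho)$'' follows from Fontaine--Laffaille lower bounds on weights of crystalline lifts coming from classical modular forms, supplemented by Serre's direct treatment of the boundary weight $b = \ell$. The converse ``$V \in W(\rho) \Rightarrow \rho$ modular of weight $V$'' is the substantive weight-shifting content; it is the equivalence between the weak and refined Serre conjectures over $\Q$, established by Ribet~\cite{ribet:inv}, Gross~\cite{gross}, Coleman--Voloch~\cite{cv}, Edixhoven~\cite{edixhoven} and others (surveyed in~\cite{fred:hk} for odd~$\ell$, and extended to $\ell=2$ in~\cite{buzzard:2}), and becomes unconditional when combined with~\cite{kw:serre1, kw:serre2}.

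The main obstacle is the careful matching of the spaces $L_\alpha$ with Serre's peu/tr\`es ramifi\'ee dichotomy, and, in the case $\chi_1 = \chi_2$, with the companion-forms criterion, especially at $b \in \{1,\ell\}$ and for $\ell = 2$. This amounts to checking that the crystalline-lift definition of $L_\alpha$ via Bloch--Kato, modified as in Lemma~\ref{lem:dim}, reproduces precisely the extension classes that Serre singles out in~\cite{serre:duke}; once this verification is done, every remaining Serre weight corresponds to an unambiguous Fontaine--Laffaille module with Hodge--Tate weights in $[1,\ell-1]$, and the theorem reduces to a direct translation of the classical refined conjecture.
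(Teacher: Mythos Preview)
Your approach is essentially the paper's: twist to normalize, compute $W(\rho)$ explicitly, translate to classical weights via Proposition~\ref{prop:classical} and Corollary~\ref{gl1}, and invoke the classical results over~$\Q$. The paper carries out the case analysis in full and isolates the four specific ingredients needed for the harder direction (multiplication by the Hasse invariant, Edixhoven's $\theta$-cycle lemma in the irreducible case, Mazur's Principle, and Gross's companion forms theorem in the split reducible case). Two minor points: your appeal to Khare--Wintenberger is unnecessary since Conjecture~\ref{conj:weight} is already conditional on modularity; and labelling the converse as ``the equivalence of weak and refined Serre'' is slightly imprecise, since that equivalence produces the \emph{minimal} weight whereas here you need $\rho$ modular of \emph{every} $V\in W(\rho)$---the results you cite do cover this, but the paper is careful to extract the individual weight-shifting statements rather than citing the refined conjecture as a package.
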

\begin{proof} Replacing $\rho$ by a twist, we can assume $\rho|_{I_\ell}$ has the form 
$ \begin{pmatrix}\omega_2^b&{0}\\0&\omega_2^{\ell b}\end{pmatrix}$
or $ \begin{pmatrix}\omega^b&{*}\\0&1\end{pmatrix}$
for some $b$ with $1\le b \le \ell-1$. In the second case we write
$\rho|_{G_{\Q_\ell}}\sim \begin{pmatrix}\chi_1&{*}\\0&\chi_2\end{pmatrix}$.
We shall also use $\omega$ to denote the mod $\ell$ cyclotomic character on $G_\Q$
or $G_{\Q_\ell}$.

In the first case we find that $W(\rho) = \{V_{0,b},V_{b-1,\ell+1-b}\}$.
In the second case, we have the following possibilities:
$$W'(\chi_1,\chi_2) = \left\{\begin{array}{ll}
\{(V_{0,b},S),(V_{b,\ell-1-b},\emptyset)\},&
\mbox{if $1 < b < \ell - 2$,}\\
\{(V_{0,\ell-1},S),(V_{0,\ell-1},\emptyset)\},&
\mbox{if $b = \ell - 1$ and $\ell > 2$,}\\
\{(V_{0,\ell},S),(V_{0,1},S),(V_{1,\ell-2},\emptyset)\},&
\mbox{if $b = 1$ and $\ell > 3$,}\\
\{(V_{0,\ell-2},S),(V_{\ell-2,\ell},\emptyset),(V_{\ell-2,1},\emptyset)\},&
\mbox{if $b = \ell-2$ and $\ell > 3$,}\\
\{(V_{0,\ell},S),(V_{0,1},S),(V_{1,\ell},\emptyset),(V_{1,1},\emptyset)\},&
\mbox{if $b = 1$ and $\ell \le 3$.}\end{array}\right.$$
Moreover, dimension considerations show that
$L_{(V,J)} = H^1(G_{\Q_\ell},\ol{\F}(\chi_1\chi_2^{-1}))$ whenever
$J=S$, unless $\chi_1\chi_2^{-1}=\omega$ and $V=V_{(0,1)}$, in which
case $L_{(V,J)}$ consists of the
peu ramifi\'ees classes (see Remark~\ref{rmk:peuramifie}).
Considering dimensions in this case then gives that
$L_{(V,J)}$ consists precisely of the classes
which are peu ramifi\'ees. Note also that
$L_{(V,J)} = 0$ whenever $J = \emptyset$ unless
$\chi_1\chi_2^{-1}$ is trivial, in which case all we need to know
is that if $\ell=2$, then $L_{(V_{0,1},\emptyset)}$ is the set of
peu ramifi\'ees classes by a direct calculation.
This gives
$$W(\rho) = \left\{\begin{array}{ll}
\{V_{0,b}\},&
\mbox{if $1 < b < \ell - 1$ and $\rho|_{G_{\Q_\ell}}$ is non-split,}\\
\{V_{0,b},V_{b,\ell-1-b}\},&
\mbox{if $1 < b < \ell - 2$ and $\rho|_{G_{\Q_\ell}}$ is split,}\\
\{V_{0,\ell-2}, V_{\ell-2,\ell}, V_{\ell-2,1}\},&
\mbox{if $b = \ell - 2$, $\ell > 3$ and $\rho|_{G_{\Q_\ell}}$ is split,}\\
\{V_{0,\ell-1}\},&
\mbox{if $b = \ell - 1$ and $\ell > 2$,}\\
\{V_{0,\ell}\},&
\mbox{if $b = 1$, $\chi_1\chi_2^{-1}=\omega$ and $\rho|_{G_{\Q_\ell}}$
 is tr\`es ramifi\'ee,}\\
\{V_{0,\ell},V_{0,1},V_{1,\ell-2}\},&
\mbox{if $b = 1$, $\ell>3$ and $\rho|_{G_{\Q_\ell}}$ is split,}\\
\{V_{0,3},V_{0,1},V_{1,3},V_{1,1}\},&
\mbox{if $b = 1$, $\ell=3$ and $\rho|_{G_{\Q_\ell}}$ is split,}\\
\{V_{0,\ell},V_{0,1}\},&
\mbox{otherwise.}\end{array}\right.$$

Propositions~\ref{prop:classical} and Corollary~\ref{gl1}(2) show that 
$\rho$ is modular of weight\footnote{Most of the literature on Serre's
Conjecture in the classical case uses arithmetic conventions, so for
the purpose of this proof, we view $\rho$ as ``modular of weight $k$''
if $\rho \sim \ol{\rho}_\pi$ for some cuspidal automorphic $\pi$ with
$\pi_\infty\cong D_{k,k}$ in the notation of \S2 and \cite{carayol:ens}.}
$b+1$ and level prime to $\ell$
if and only if $\omega^{a}\rho$ is
modular of weight $V_{a,b}$. If $\rho$ is modular
of weight $2$ and level prime to $\ell$, then $\rho|_{G_{\Q_\ell}}$ arises from a finite
flat group scheme over $\Z_\ell$, so it follows from results
of Deligne and Fontaine (\cite{edixhoven}, Theorems~2.5, 2.6) and the explicit
description of $W(\rho)$ above that
if $\rho$ is modular of weight $V$, then $V \in W(\rho)$.

To show that if $V \in W(\rho)$, then $\rho$ is
modular of weight $V$, we combine the following four
results. The first of these is a standard consequence of
multiplication by the Hasse invariant (or Eisenstein series)
of weight $\ell - 1$.
\begin{lemma} \label{lem:Eis} If $\rho$ is modular of weight $2$ and
level prime to $\ell$, then $\rho$ is modular of weight
$\ell + 1$ and level prime to $\ell$.
\end{lemma}

The theorem in the irreducible case is then a consequence of the following
result of Edixhoven; see the second paragraph of \cite[4.5]{edixhoven}.
\begin{lemma} \label{lem:theta} Suppose that $\rho$ is
modular of weight $b+1$ and level prime to $\ell$ with $2 \le b \le \ell$.
If $\rho|_{G_{\Q_\ell}}$ is irreducible, then $\omega^{1-b}\rho$
is modular of weight $\ell+2-b$ and level prime to $\ell$.
\end{lemma}

To treat the reducible case, we first apply Mazur's Principle
\cite[2.8]{edixhoven}.
\begin{theorem}\label{thm:mazur} Suppose that $\rho$ is modular
of weight $\ell + 1$ and level prime to $\ell$. If $\rho$ is not
modular of weight $2$ and level prime to $\ell$, then 
$\rho|_{G_{\Q_\ell}}$ is a tr\`es ramifi\'ee representation of the
form $\begin{pmatrix}\omega\chi_2&{*}\\0&\chi_2\end{pmatrix}$
for some unramified character $\chi_2$.
\end{theorem}

The theorem is then a consequence of the companion forms theorem
of Gross \cite{gross}\footnote{Gross's proof relies on the Hecke-equivariance
of certain isomorphisms, later checked by Cais in \cite{cais_thesis}.
In the meantime, other proofs were given by Coleman and Voloch \cite{cv} 
(but under slightly different hypotheses than we need), Faltings
and Jordan \cite{fj}, and Gee \cite{gee:duke}}: 
\begin{theorem}\label{thm:companion}
Suppose that $\rho$ is modular of weight $b+1$ and level
prime to $\ell$ with $1 \le b \le \ell - 2$. If $\rho|_{G_{\Q_\ell}}$
is reducible and split, then $\omega^{-b}\rho$ is modular
of weight $\ell - b$ and level prime to $\ell$.
\end{theorem}
Theorem~\ref{thm:Qconj} now follows.
\end{proof}
We end by remarking that Edixhoven's refinement of Serre's conjecture
includes the statement that if $\rho$ is unramified at~$\ell$ then it
should come from a mod~$\ell$ modular form of weight~1. This refinement
is not implied by our conjecture.

\section{Mod $\ell$ Langlands correspondences}
\label{sec:refined}
As Serre himself remarks in \cite{serre:duke}, his conjecture can be viewed
as part of a ``mod~$\ell$ Langlands philosophy.'' Indeed the weak conjecture
can be viewed as asserting the existence of a global mod $\ell$ Langlands
correspondence for $\GL_2/\Q$, and the refinement can be viewed as a
local-global compatibility statement. This was made precise by Emerton
in \cite{emerton:draft-a} as follows.
Consider the representation
$$H = \lim_{\stackrel{\rightarrow}{U}} H^1_\et(Y_{U,\overline{K}},\Flbar)$$
of $G_\Q\times \GL_2(\A_f)$. The weak form of Serre's conjecture
is the statement that if
 $\rho:G_\Q\to\GL_2(\Flbar)$ is continuous, odd and irreducible, then
$$\pi(\rho) := \Hom_{\Flbar[G_\Q]}(\rho,H)$$
is non-zero. Under some technical hypotheses on $\rho|_{G_{\Q_\ell}}$,
Emerton shows that $\pi(\rho)$ factors as a restricted tensor
product of representations $\pi_p$ where $\pi_p$ is a representation
of $\GL_2(\Q_p)$ determined by $\rho|_{G_{\Q_p}}$. The ``level part''
of Serre's refinement can then be recovered from the fact that
if $p\neq \ell$ and $\rho|_{G_{\Q_p}}$ has Artin conductor $p^{c_p}$, then
$\pi_p^{U_1(p^{c_p})}\neq 0$, and the ``weight part'' from the
fact that 
$$\Hom_{\GL_2(\Z_\ell)}(\det{}^{1-k}\otimes\Symm^{k-2}\Flbar^2,\pi_\ell)\neq 0$$
where $k= k(\rho)\ge2$ is the weight of $\rho|_{G_{\Q_\ell}}$ as defined
by Serre in \cite{serre:duke}.
In this section we formulate a conjectural extension of Emerton's refinement to
our setting, namely that of mod $\ell$ representations arising from a quaternion
algebra $D$ over a totally real field $K$. 

Suppose now that
$$\rho:G_K\to \GL_2(\Flbar)$$
is continuous, totally odd and irreducible.
We shall associate to $\rho$ a smooth
representation $\pi^D(\rho)$ of $(D\otimes\hat{\Z})^\times$ over $\Flbar$
and give a conjectural description for it as a product of local factors.
Thus for each prime $\gp$ of $K$ we would like to associate
to $\rho$ a smooth admissible representation of $D_\gp^\times$
defined over $\Flbar$, ideally depending only on $\rho|_{G_{K_\gp}}$.
We are able to achieve this for primes
$\gp$ not dividing $\ell$;
indeed this was already done by Emerton if $D$ is split at $\gp$
and the main new ingredient of this section is to treat the case
where $D_\gp$ is a quaternion algebra. 
For $\gp|\ell$, we are not yet able to give a
conjectural description of the local factor, but 
the weight conjecture formulated in the preceding section
can be interpreted as a description of the Jordan-H\"older
factors of its socle
under a maximal compact subgroup of $D_\gp^\times$. (We are grateful to 
Breuil and
Emerton for this observation.)

We begin by recalling Emerton's formulation
of a mod $\ell$ local Langlands correspondence for $\GL_2$ over $K_\gp$
for $\gp$ not dividing $\ell$; see 
\cite{emerton:draft-b}.
Emerton's construction is a modification
of one provided by Vign\'eras in \cite{vigneras:gln}, on whose
results it relies, a key difference being that \cite{emerton:draft-b}
involves reducible representations of $\GL_2(K_\gp)$ in order 
to prove local-global compatibility.

Fix for now a prime $\gp$ not dividing $\ell$ and let $q = \N(\gp) = \#(\CO_K/\gp)$.
For a continuous representation
$\tilde{\rho}:G_{K_\gp}\to \GL_2(\Qlbar)$, we let $\pi(\tilde{\rho})$
denote its local Langlands correspondent as modified in \cite{emerton:coates}
or \cite{emerton:draft-b}.
More precisely, $\pi(\tilde{\rho})$ is the 
usual\footnote{Recall that we follow Carayol's conventions in \cite{carayol:ens} with regard to the local Langlands correspondence; these differ from Emerton's, but we make the same modification as in \cite{emerton:coates, emerton:draft-b}.}
irreducible admissible $\Qlbar$-representation of $\GL_2(K_\gp)$ provided by
the local Langlands correspondence, unless $\tilde{\rho}$ is the
sum of two characters whose ratio is cyclotomic in which case
$\pi(\tilde{\rho})$ is a generic representation of length two.
\begin{theorem} \label{thm:emerton} (Emerton) There is a map $\rho \mapsto \pi(\rho)$ from the
set of isomorphism classes of continuous representations $G_{K_\gp}\to\GL_2(\Flbar)$
to the set of isomorphism classes
of finite-length smooth admissible $\Flbar$-representations of $G=GL_2(K_\gp)$,
uniquely determined by the following properties:
\begin{enumerate}
\item The representation $\pi(\rho)$ has a unique irreducible subrepresentation $\pi'$, and
$\pi(\rho)/\pi'$ is finite-dimensional.
\item If $\tilde{\rho}:G_{K_\gp} \to \GL_2(\Qlbar)$ is a continuous lift of $\rho$,
then there is a $G$-equivariant $\ol{\Z}_\ell$-lattice in $\pi(\tilde{\rho})$
whose reduction admits a $G$-equivariant embedding into $\pi(\rho)$. 
\item If $\pi$ is a finite-length smooth admissible $\Flbar$-representation of $G$
satisfying conditions (1) and (2), then there exists a $G$-equivariant
embedding $\pi \hookrightarrow \pi(\rho)$.
\end{enumerate}
\end{theorem}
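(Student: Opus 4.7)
The plan is to separate the existence and uniqueness questions. Uniqueness is almost immediate from the axioms: if $\pi$ and $\pi''$ both satisfy (1)--(3), then applying (3) in each direction yields $G$-equivariant embeddings $\pi \hookrightarrow \pi''$ and $\pi'' \hookrightarrow \pi$, and since both have finite length by (1), these embeddings must be isomorphisms. The real content therefore lies in constructing $\pi(\rho)$ with the prescribed properties and verifying them.

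The starting point for existence is Vign\'eras' mod~$\ell$ local Langlands correspondence \cite{vigneras:gln}, which associates to each $\rho$ an irreducible admissible $\Flbar$-representation $\pi_0(\rho)$ of $G = \GL_2(K_\gp)$; this will serve as the unique irreducible subrepresentation $\pi'$ in~(1). For those $\rho$ all of whose $\Qlbar$-lifts $\tilde{\rho}$ have $\pi(\tilde{\rho})$ irreducible, I would simply take $\pi(\rho) = \pi_0(\rho)$. Properties~(1) and~(2) are then immediate from Vign\'eras' compatibility between the $\ell$-adic and mod~$\ell$ correspondences (after tracking the Carayol/Hecke normalisation convention of \cite{emerton:coates}), and (3) follows in this generic case because any $\pi$ satisfying (1) and (2) must contain $\pi_0(\rho)$ as its socle and admit no non-trivial finite-dimensional quotient consistent with the reductions.

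The substantive construction is for the non-generic $\rho$, which occur exactly when $\rho \sim \chi_1 \oplus \chi_2$ with $\chi_1 \chi_2^{-1}$ cyclotomic, since then some lift has the form $\tilde{\rho} = \tilde{\chi}_1 \oplus \tilde{\chi}_2$ with cyclotomic ratio and so $\pi(\tilde{\rho})$ is, by the convention of the paragraph preceding the theorem, a length-two generic principal series. In this situation the reduction modulo~$\ell$ of a $G$-stable lattice in $\pi(\tilde{\rho})$ depends on the lattice and can produce non-isomorphic subquotients, none of which fits inside the irreducible $\pi_0(\rho)$ alone. The remedy is to define $\pi(\rho)$ as a non-split extension of a carefully chosen finite-dimensional representation by $\pi_0(\rho)$, assembled so that its socle is $\pi_0(\rho)$ and its Jordan--H\"older constituents exhaust every constituent that can appear in the reduction of a lattice in some $\pi(\tilde{\rho})$. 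The required Jordan--H\"older content can be read off from Vign\'eras' explicit description of $\pi(\tilde{\rho})$ combined with a Breuil-style integral analysis of the relevant principal series.

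The main obstacle is then the universality in~(3): one must show that \emph{every} lattice in \emph{every} lift reduces into this enlarged $\pi(\rho)$, and that $\pi(\rho)$ is not strictly larger than necessary. Concretely, this amounts to controlling the relevant $\Ext^1$ groups in the category of smooth $\Flbar$-representations of $G$ between the characters and Steinberg-like objects appearing as Jordan--H\"older factors, and using an integral model computation to exhibit explicit lattices realising each predicted reduction. A secondary subtlety is book-keeping for normalisations: Carayol's Hecke-normalisation used throughout this paper differs from the normalisations of \cite{vigneras:gln, emerton:draft-b} by a dual and a twist, and the reduction compatibility of (2) must be matched accordingly with $\pi(\tilde{\rho})$ as defined here.
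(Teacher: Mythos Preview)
The paper does not give its own proof of this theorem: it is stated as a result of Emerton, with a reference to \cite{emerton:draft-b}, and the surrounding text merely remarks that Emerton's construction is a modification of Vign\'eras' in \cite{vigneras:gln}. So there is no in-paper proof against which to compare your proposal.

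That said, your outline is broadly aligned with how the result is actually established. Your uniqueness argument is correct: mutual embeddings between finite-length objects force equal length and hence isomorphism. Your existence sketch---take Vign\'eras' irreducible $\pi_0(\rho)$ as socle, and enlarge it by a finite-dimensional piece precisely when reductions of lattices in some $\pi(\tilde{\rho})$ can overflow $\pi_0(\rho)$---is the right shape, and your identification of the delicate case (lifts $\tilde{\rho}$ that are sums of characters with cyclotomic ratio, forcing $\pi(\tilde{\rho})$ to have length two) matches the paper's description preceding the theorem. One small caution: your phrase ``$\rho \sim \chi_1 \oplus \chi_2$ with $\chi_1\chi_2^{-1}$ cyclotomic'' is slightly imprecise, since the relevant $\rho$ need only have semisimplification of this form (a non-split $\rho$ can still admit a split lift with cyclotomic ratio via a suitable lattice), so the case analysis on the $\rho$ side is a bit broader than you suggest. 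Beyond that, the genuine work---the $\Ext^1$ computations and the exhibition of lattices realising each possible reduction---is exactly where you locate it, and is carried out in \cite{emerton:draft-b} rather than in the present paper.
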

If $D$ is split at $\gp$, we let $\pi^{D_\gp}(\rho)$ denote the
representation of $D_\gp^\times\cong\GL_2(K_\gp)$ given by the
theorem. 

We would like an analogue of the theorem which associates to $\rho$
a representation of $D_\gp^\times$ when $D_\gp$ is a non-split
quaternion algebra
over $K_\gp$. Again our construction is a modification of one
already provided by Vign\'eras (this time in \cite{vigneras:quat})
on whose results we rely.
Since the irreducible smooth admissible $\Flbar$-representations
of $D_\gp^\times$ are finite-dimensional,
one might expect a result like Theorem~\ref{thm:emerton} but without condition (1).
However results of Ribet \cite{ribet:psvolume} show this naive generalisation 
would not satisfy local-global compatibility,
and there are further complications when $\ell=2$. 
On the other hand we can give a more explicit description of the desired representation
$\pi^{D_\gp}(\rho)$.

Recall from Theorem~15.1 of~\cite{JL}
that the (local) Jacquet-Langlands correspondence establishes a bijection between
isomorphism classes of irreducible admissible square-integrable $\Qlbar$-representations of
$\GL_2(K_\gp)$ and irreducible admissible $\Qlbar$-representations of $D_\gp^\times$.
If $\tilde{\rho}:G_{K_\gp} \to \GL_2(\Qlbar)$ is a continuous representation, then
$\pi(\tilde{\rho})$ is square-integrable if and only if $\tilde{\rho}$ is either
irreducible or isomorphic to a twist of the non-split representation of the form
$\smallmat{1}{*}{0}{\omega^{-1}}$ where $\omega$ is the $\ell$-adic cyclotomic character,
and then we let $\pi^{D_\gp}(\tilde{\rho})$ the representation of $D_\gp^\times$
corresponding to $\pi(\tilde{\rho})$ via Jacquet-Langlands.

Now consider again a continuous representation $\rho:G_{K_\gp}\to\GL_2(\Flbar)$.
It is straightforward to check that there exist continuous representations 
$\tilde{\rho}:G_{K_\gp}\to\GL_2(\Qlbar)$ containing $G_{K_\gp}$-stable lattices
whose reduction is $\rho$. Moreover, if $\tilde{\rho}$ is irreducible, then so is $\rho$
except in the following case: if $q\equiv-1\bmod\ell$, $L$ is the quadratic unramified
extension of $K_\gp$ and $\tilde{\rho}\sim \chi\otimes\Ind_{G_L}^{G_{K_\gp}}\xi$ for
some characters $\chi$ of $G_{K_\gp}$ and $\xi$ of $G_L$ such that $\xi$ is different
from its $\Gal(L/K_\gp)$-conjugate and has $\ell$-power order, then
$\rho$ has semi-simplification isomorphic to $\ol{\chi}\oplus \ol{\chi}\ol{\omega}^{-1}$ where
$\ol{\omega}$ is the mod $\ell$ cyclotomic character. (Note that if
$q\equiv-1\bmod\ell$, then $\ol{\omega}$ is the quadratic unramified character
of $G_{K_\gp}$, unless $\ell=2$ in which case $\ol{\omega}=1$.)

For representations of $D_\gp^\times$ we have the following result of Vign\'eras
(Propositions~9 and~11 and Corollary~12 of \cite{vigneras:quat}):
\begin{proposition}(Vign\'eras) \label{prop:vigneras} Suppose that $\rho:G_{K_\gp}\to\GL_2(\Flbar)$
is continuous and irreducible. If $\tilde{\rho}:G_{K_\gp}\to \GL_2(\Qlbar)$
is a lift of $\rho$, then the reduction of $\pi^{D_\gp}(\tilde{\rho})$ is
irreducible and depends only on $\rho$.
\end{proposition}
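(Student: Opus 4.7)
The plan is to follow Vign\'eras's arguments in \cite{vigneras:quat}. Since $\rho$ is irreducible, any lift $\tilde\rho$ must itself be irreducible (reduction cannot turn a reducible representation into an irreducible one), so $\pi(\tilde\rho)$ is supercuspidal and hence square-integrable, and $\pi^{D_\gp}(\tilde\rho)$ is defined as a finite-dimensional smooth irreducible $\Qlbar$-representation of $D_\gp^\times$ (finite-dimensional because $D_\gp^\times/K_\gp^\times$ is compact). By a standard averaging argument---choose a small open compact $U\subset\CO_{D_\gp}^\times$ acting trivially, use finite-group invariance to get a $\CO_{D_\gp}^\times$-stable $\ol{\Z}_\ell$-lattice, then rescale so a uniformizer of $D_\gp$ acts by a unit and take $\Z$-translates by it---one obtains a $D_\gp^\times$-stable $\ol{\Z}_\ell$-lattice, and by Brauer--Nesbitt the semisimplification of its mod-$\ell$ reduction is independent of the choice of lattice.

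The irreducibility of the reduction is the heart of the matter. I would exploit the explicit construction of supercuspidal types: via Jacquet--Langlands, $\pi^{D_\gp}(\tilde\rho)$ has the form $\Ind_H^{D_\gp^\times}\chi$, where $H$ is a compact-mod-center open subgroup (a conjugate of the normaliser of the unit group of an order in $D_\gp$) and $\chi$ is a smooth character whose restriction to the pro-$p$ radical of $H$ is determined by the local Langlands correspondence applied to $\tilde\rho$. Since $[D_\gp^\times:H]$ is finite modulo the centre, induction commutes with reduction mod~$\ell$, giving $\Ind_H^{D_\gp^\times}\ol\chi$. By Mackey's criterion, irreducibility of this induced representation is equivalent to $\ol\chi$ being distinct from each of its conjugates by coset representatives of $D_\gp^\times/H$. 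These conjugates are governed by $\rho|_{I_{K_\gp}}$, and the irreducibility of $\rho$ is what forces the required distinctness to persist modulo~$\ell$.

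Independence from the lift follows from the same analysis: $\chi$ is determined up to an unramified twist by $\tilde\rho|_{I_{K_\gp}}$, and two lifts $\tilde\rho_1,\tilde\rho_2$ of $\rho$ yield $\chi_1,\chi_2$ differing by a character whose reduction is trivial on $H$, so $\ol\chi_1=\ol\chi_2$ and the induced representations agree on reduction.

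The main obstacle lies in establishing that the Mackey criterion survives reduction modulo~$\ell$, particularly when $q\equiv\pm 1\bmod\ell$, where distinct characters of the finite residue-level quotient of $H$ can coincide modulo~$\ell$ without coinciding in characteristic zero. The analysis here uses the irreducibility of $\rho$ crucially: in the parallel reducible situation one sees from Theorem~\ref{thm:emerton} and the discussion preceding Proposition~\ref{prop:vigneras} that when $q\equiv-1\bmod\ell$ such coincidences genuinely occur and precisely produce a reducible $\rho$ of the form $\ol\chi\oplus\ol\chi\ol\omega^{-1}$---the configuration excluded by our hypothesis.
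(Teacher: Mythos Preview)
The paper does not supply its own proof here; it simply attributes the result to Vign\'eras and cites Propositions~9 and~11 and Corollary~12 of \cite{vigneras:quat}. Your outline via explicit types and Mackey's criterion is in the right spirit, and you correctly identify the survival of the Mackey irreducibility criterion under reduction as the delicate point for the first assertion.

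The gap lies in your treatment of independence from the lift. You claim that two lifts $\tilde\rho_1,\tilde\rho_2$ of $\rho$ produce characters $\chi_1,\chi_2$ on the same inducing subgroup $H$ with $\ol\chi_1=\ol\chi_2$, but this is not ``the same analysis'' as the irreducibility step: it requires knowing that the composite of explicit local Langlands and explicit Jacquet--Langlands, $\tilde\rho\mapsto(H,\chi)$, respects congruences modulo~$\ell$. Two lifts of the same $\rho$ can differ on tame inertia by characters of $\ell$-power order, and you have not argued why the resulting inducing data must agree after reduction (nor, strictly, why the subgroup $H$ itself cannot change). Establishing this compatibility is exactly what Vign\'eras's cited propositions do, so at this point your sketch is assuming the substance of the result rather than deriving it.
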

If $\rho:G_{K_\gp}\to\GL_2(\Flbar)$ is irreducible, we define $\pi^{D_\gp}(\rho)$
to be the reduction of $\pi^{D_\gp}(\tilde{\rho})$ for any lift $\tilde{\rho}$
of $\rho$; this is well-defined by the proposition.

Suppose now that $\rho$ is reducible. If $\rho$ is not a twist of a representation of
the form $\smallmat{1}{*}{0}{\ol{\omega}^{-1}}$ (where $\ol{\omega}$ is the mod $\ell$ cyclotomic character),
then $\rho$ has no lifts $\tilde{\rho}$ such that $\pi(\tilde{\rho})$ is square-integrable,
and we define $\pi^{D_\gp}(\rho)=0$.

Suppose now that $\rho\sim\smallmat{\chi}{*}{0}{\chi\ol{\omega}^{-1}}$ for some character
$\chi:G_{K_\gp}\to\Flbar^\times$ (i.e., $\rho$ is any extension of $\chi\ol{\omega}^{-1}$ by $\chi$).
If $q\not\equiv-1\bmod\ell$, then the only
lifts $\tilde{\rho}$ of $\rho$ for which $\pi(\tilde{\rho})$ is square-integrable
are non-split representations of the form $\smallmat{\tilde{\chi}}{*}{0}{\tilde{\chi}\omega^{-1}}$
where $\tilde{\chi}:G_{K_\gp}\to\CO_{K_\gp}^\times$ lifts $\chi$.
In this case $\pi^{D_\gp}(\tilde{\rho}) = \tilde{\chi}^{-1}\circ\det$ where $\det:D_\gp^\times
\to K_\gp^\times$ is the reduced norm (using $\tilde{\chi}$ and $\chi$ also to denote the
characters of $K_\gp^\times$ to which they correspond via class field theory). 
We then define $\pi^{D_\gp}(\rho)$ to be $\chi^{-1}\circ\det$, {\em unless}
$q\equiv 1 \bmod\ell$ and $\rho$ is the split representation $\chi\oplus\chi$
in which case we define $\pi^{D_\gp}(\rho)$ to be $(\chi^{-1}\circ\det)\oplus (\chi^{-1}\circ\det)$
(note that $\ol{\omega}$ is trivial).
\begin{remark}
Note that the reduction of $\pi^{D_\gp}(\tilde{\rho})$ is $\chi^{-1}\circ\det$, which then
coincides with $\pi^{D_\gp}(\rho)$ unless we are in the exceptional case where $q\equiv1\bmod\ell$
and $\rho$ is scalar.  In this case our definition of $\pi^{D_\gp}(\rho)$ is motivated
by results of Ribet~\cite{ribet:psvolume} and Yang~\cite{yang} on multiplicities of
Galois representations in the cohomology of Shimura curves.
\end{remark}

Suppose now that $\rho\sim\smallmat{\chi}{*}{0}{\chi\ol{\omega}^{-1}}$ for some character
$\chi:G_{K_\gp}\to\Flbar^\times$ and that $q\equiv-1\bmod\ell$. Let $c_\rho$ denote
the extension class associated to $\rho$ in 
$$\Ext^1_{\Flbar[G_{K_\gp}]}(\chi\ol{\omega}^{-1},\chi)\cong H^1(G_{K_\gp},\Flbar(\ol{\omega})).$$
Note that this space is $1$-dimensional unless $\ell=2$ in which case it is $2$-dimensional.
Recall that such $\rho$ have irreducible lifts $\tilde{\rho}$ which are twists of tamely ramified
representations induced from $G_L$ where $L$ is the unramified quadratic extension of $K_\gp$. For such
$\tilde{\rho}$, $\pi^{D_\gp}(\tilde{\rho})$ is a two-dimensional representation of $D_{\gp}^\times$
whose reduction (which depends on a choice of lattice) has semi-simplification 
$\chi^{-1}\circ\det\oplus (\chi^{-1}\ol{\omega}^{-1})\circ\det$ (see \cite{vigneras:quat}). 
We will define $\pi^{D_\gp}(\rho)$ as a certain extension of
$(\chi^{-1}\ol{\omega}^{-1})\circ\det$ by $\chi^{-1}\circ\det$ depending on $c_\rho$.
To this end we will first compute 
$$\Ext^1_{\Flbar[D_\gp^\times]}((\chi^{-1}\ol{\omega}^{-1})\circ\det,\chi^{-1}\circ\det)
 \cong H^1(D_\gp^\times,\Flbar(\ol{\omega}\circ\det)).$$
Although a unified treatment is possible (see Remark~\ref{rmk:unified}),
it is simpler to consider separately the cases $\ell > 2$ and $\ell = 2$.

Suppose first that $\ell > 2$. Let $\CO_{D_\gp}$ denote the maximal
order in $D_\gp$ and $\Pi$ a uniformizer, so $\val_{K_\gp}(\det\Pi) = 1$
and $\CO_{D_\gp}/\Pi\CO_{D_\gp} \cong \F_{q^2}$. Letting
$\Gamma = D_\gp^\times/(1+\Pi\CO_{D_\gp})$, we have an exact sequence
$$1 \to \F_{q^2}^\times \to \Gamma \to \Z \to 0$$
where the map $\Gamma\to\Z$ is $\val\circ\det$ and $n\in \Z$
acts on $\F_{q^2}^\times$ by $x\mapsto x^{q^n}$.
Note that $\F_{q^2}^\times$ acts trivially on $\Flbar(\ol{\omega}\circ\det)$
and the induced action of $n\in\Z$ is via the character $\mu(n) = (-1)^n=q^n$.
Since $1+\Pi\CO_{D_\gp}$ is pro-$p$, we have that
$$H^1(D_\gp^\times,\Flbar(\ol{\omega}\circ\det))
 \cong H^1(\Gamma,\Flbar(\ol{\omega}\circ\det)).$$
Since $H^1(\Z,\Flbar(\mu))= H^2(\Z,\Flbar(\mu))=0$, we have that
$$ H^1(\Gamma,\Flbar(\ol{\omega}\circ\det))=\Hom_\Z(\F_{q^2}^\times,\Flbar(\mu))$$
is one-dimensional. Hence there is a unique isomorphism
class of $\Flbar$-representations of $D_\gp^\times$ which
are non-trivial extensions of 
$(\chi^{-1}\ol{\omega}^{-1})\circ\det$ by $\chi^{-1}\circ\det$.
We define $\pi^{D_\gp}(\rho)$ to be the extension which is
split if and only if $c_\rho$ is trivial.

Finally consider the case $\ell=2$. Then $\ol{\omega}$ is trivial and
$$H^1(D_\gp^\times,\ol{\F}_2) = \Hom(D_\gp^\times/(D_\gp^\times)^2,\ol{\F}_2),$$
and one checks easily that $\det$ induces an isomorphism 
$$D_\gp^\times/(D_\gp^\times)^2 \iso K_\gp^\times/(K_\gp^\times)^2.$$
On the other hand local class field theory yields an isomorphism
$$H^1(G_{K_\gp},\ol{\F}_2) \cong \Hom(K_\gp^\times/(K_\gp^\times)^2,\ol{\F}_2).$$
Putting these isomorphisms together yields 
$$H^1(G_{K_\gp},\ol{\F}_2) \iso H^1(D_\gp^\times,\ol{\F}_2),$$
and we define $\pi^{D_\gp}(\rho)$ to be the extension obtained from
the image of $c_\rho$.

\begin{remark} \label{rmk:unified} To give a unified treatment for the cases
$\ell = 2$ and $\ell > 2$ when $q\equiv -1\bmod\ell$ and $\rho$ as above, embed
the unramified quadratic extension $L$ of $K_\gp$ in $D_\gp$ and
let $N$ denote the normaliser of the image of $L^\times$ in $D_\gp^\times$.
One can then check that restriction induces an isomorphism
$$H^1(D_\gp^\times,\Flbar(\ol{\omega}\circ\det)) \cong H^1(N,\Flbar(\mu))$$
where $\mu(x)=1$ if $x\in L^\times$ and $\mu(x)=-1$ otherwise.
On the other hand one finds that $N$ is isomorphic by local class
field theory to the image of the Weil 
group of $K_\gp$ in $\Gal(L^\ab/K_\gp)$ and that the inflation and
restriction maps induce isomorphisms
$$H^1(G_{K_\gp},\Flbar(\ol{\omega}))\cong
 H^1(\Gal(L^\ab/K_\gp),\Flbar(\ol{\omega}))
 \cong H^1(N,\Flbar(\mu)).$$
\end{remark}

\begin{remark} It is straightforward to check 
that if $q\equiv -1 \bmod\ell$ and $\xi$ is a character of $G_L$ of
$\ell$-power order and not equal to its Galois conjugate, then every representation $\rho$ with semisimplification
$1\oplus \ol{\omega}$ is isomorphic to the reduction of a $G_{K_\gp}$-stable
lattice in $\Ind_{G_L}^{G_{K_\gp}}\xi$. It follows that if $\ell=2$, then
every representation with scalar semisimplification has the same set
of lifts $\tilde{\rho}$ such that $\pi(\tilde{\rho})$ is square-integrable,
so $\pi^{D_\gp}(\rho)$ is not characterised by the set of
$\pi^{D_\gp}(\tilde{\rho})$.
(Note that this is also the case for trivial reasons if 
$q\equiv 1\bmod\ell$ and $\ell>2$.)
\end{remark}

We now return to the global setting of a totally odd, continuous, irreducible
$$\rho:G_K\to\GL_2(\Flbar)$$
and construct the $\Flbar$-representation of $D_f^\times = (D\otimes\hat{\Z})^\times$ whose
local factors should be the $\pi^{D_\gp}(\rho)$. We first consider the
case of a totally definite quaternion algebra $D$ over $K$. 
Fix a maximal order $\CO_D$ in $D$ and isomorphisms
$\CO_{D,\gp} \cong \M_2(\CO_{K,\gp})$ for each prime $\gp$ of $K$ at
which $D$ is split.

For each open compact subgroup $U$ of $D_f^\times$ we define
$$S^D(U) = \{\,f:D^\times\backslash D_f^\times/U \to \Flbar \,\}.$$
The obvious projection maps for $V \subset U$ induce inclusions
$S^D(U) \to S^D(V)$ and we define $S^D$ as the direct limit of
the $S^D(U)$. Thus $S^D$ can equivalently be defined as the
set of smooth functions $f:D^\times\backslash D_f^\times \to \Flbar$.
Note that this $\Flbar$-vector space admits a natural left action of
$D_f^\times$ by right translation, and $S^D(U) = (S^D)^U$, the $U$-invariant
functions in $S^D$.
Moreover for any $g\in D_f^\times$ and open compact $U,V\subset D_f^\times$
we have the double coset operator $[UgV]: S^D(V) \to S^D(U)$ defined
in the usual way. In particular, for each prime $\gp$ at which $D$
is split and $\GL_2(\CO_{K,\gp})\subset U$, we have the endomorphisms
$$T_\gp = \left[ U \smallmat{\varpi_\gp}{0}{0}{1} U \right]\quad \mbox{and}\quad
 S_\gp = \left[ U \smallmat{\varpi_\gp}{0}{0}{\varpi_\gp} U \right]$$
of $S^D(U)$, where $\varpi_\gp$ is any uniformizer of $\CO_{K,\gp}$.
If $\Sigma$ is a finite set of primes of $K$ containing all those
such that:
\begin{itemize}
\item $D$ is ramified at $\gp$,
\item $\GL_2(\CO_{K,\gp})\not\subset U$,
\item $\rho$ is ramified at $\gp$, or
\item $\gp$ divides $\ell$,
\end{itemize}
then we let $\T^\Sigma(U)$ denote the commutative $\Flbar$-subalgebra of
$\End_{\Flbar}(S^D(U))$ generated by the $S_\gp$ and $T_\gp$ for $\gp\not\in\Sigma$.
We let $\gm_\rho^\Sigma = \gm_\rho^\Sigma(U)$ denote the ideal of
$\T^\Sigma(U)$ generated by the operators
$$T_\gp - S_\gp\tr(\rho(\frob_\gp))\quad\mbox{and}\quad \N(\gp) - S_\gp\det(\rho(\frob_\gp))$$
for all $\gp\not\in\Sigma$. We let
$$S^D(U)[\gm_\rho^\Sigma] = \{\,f\in S^D(U)\,|\, \mbox{ $Tf= 0$ for all $T\in\gm_\rho^\Sigma$}\,\}.$$
If $\rho = \ol{\rho}_\pi$ for some (necessarily cuspidal) automorphic representation $\pi$
of $D^\times$ with weight $(\vec{2},0)$
and $\pi^U\neq 0$, then $\gm_\rho^\Sigma$ is a maximal ideal of $\T^\Sigma(U)$
and $S^D(U)[\gm_\rho^\Sigma]\neq 0$; otherwise $\gm_\rho^\Sigma = \T^\Sigma(U)$
and $S^D(U)[\gm_\rho^\Sigma] =0$.

\begin{lemma} \label{lemma:Hecke}
Suppose that $D$, $U$, $\rho$ and $\Sigma$ are as above. Then\\
a) $S^D(U)[\gm_\rho^\Sigma]$ is independent of $\Sigma$ (so we will denote it
 $S^D(U)[\gm_\rho]$);\\
b) if $g\in D_f^\times$ and $V$ is an open compact subgroup of $D_f^\times$
 such that $V \subset gUg^{-1}$, then $g$ sends $S^D(U)[\gm_\rho]$ to $S^D(V)[\gm_\rho]$.
\end{lemma}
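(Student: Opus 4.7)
The plan for (a) begins with the algebraic observation that $\T^\Sigma(U)/\gm_\rho^\Sigma$ is either zero or isomorphic to $\Flbar$.  Indeed, for $\gq\notin\Sigma$ we have $\det\rho(\frob_\gq)\in\Flbar^\times$ and $\N(\gq)\in\Flbar^\times$ (since $\gq\nmid\ell$), so modulo $\gm_\rho^\Sigma$ the relation $\N(\gq)\equiv S_\gq\det\rho(\frob_\gq)$ makes $S_\gq$ into a scalar in $\Flbar^\times$, and then $T_\gq$ is likewise a scalar.  To compare torsions for $\Sigma\subset\Sigma'$, I would take $f\in S^D(U)[\gm_\rho^{\Sigma'}]$ (the nontrivial inclusion) and note that by commutativity $\T^{\Sigma'}(U)\cdot f$ is annihilated by $\gm_\rho^{\Sigma'}$, hence is a module over $\T^{\Sigma'}(U)/\gm_\rho^{\Sigma'}\cong\Flbar$, so is one-dimensional.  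This exhibits $f$ as a simultaneous eigenvector for $\T^{\Sigma'}(U)$ with the canonical eigensystem $\mu_\rho$.

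Next I would invoke automorphic theory.  Strong multiplicity one for $\GL_2/K$, transferred to $D^\times$ via Jacquet-Langlands (and noting that the one-dimensional constituents of $S^D(U)$ give reducible Galois representations, hence cannot match the irreducible $\rho$), yields a unique cuspidal automorphic $\pi$ of weight $(\vec{2},0)$ with these eigenvalues at $\gq\notin\Sigma'$, and places $f\in\pi^U$.  The associated $\ol{\rho}_\pi$ then satisfies $\tr\ol{\rho}_\pi(\frob_\gq)=\tr\rho(\frob_\gq)$ for all $\gq\notin\Sigma'$, so by Chebotarev and Brauer-Nesbitt (applicable because $\rho$ is irreducible and two-dimensional), $\ol{\rho}_\pi\cong\rho$.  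For each $\gp\in\Sigma'\setminus\Sigma$, the prime $\gp$ satisfies the good conditions defining $\Sigma$, so $\pi_\gp$ is spherical; the operators $T_\gp$ and $S_\gp$ act on the line $\pi_\gp^{\GL_2(\CO_{K,\gp})}\supset\pi_\gp^{U_\gp}$ as scalars, and local-global compatibility at the unramified prime $\gp$ identifies these scalars with the values predicted by $\rho_\pi$ (equivalently by $\ol{\rho}_\pi\cong\rho$).  Hence the additional generators $T_\gp-S_\gp\tr\rho(\frob_\gp)$ and $\N(\gp)-S_\gp\det\rho(\frob_\gp)$ of $\gm_\rho^\Sigma$ annihilate $f$, giving $f\in S^D(U)[\gm_\rho^\Sigma]$.

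For (b), I would use (a) to enlarge $\Sigma$ freely, choosing it to contain the finite set of primes $\gq$ at which $g_\gq\notin U_\gq\cap V_\gq$ or $U_\gq\neq V_\gq$.  For $\gq\notin\Sigma$ we then have $g_\gq\in U_\gq=V_\gq$, and using right $U$-invariance of $f$ one checks that splitting off the $\gq$-component of $g$ and absorbing it into~$U$ does not alter $gf$; we may thus assume $g_\gq=1$.  Coset representatives for the local double coset $U_\gq\tau U_\gq=V_\gq\tau V_\gq$ supported only at $\gq$ then commute with $g$ in $D_f^\times$, giving the intertwining $T_\gq^V\circ g=g\circ T_\gq^U$ on $S^D(U)$, and similarly for $S_\gq$ and $\N(\gq)$.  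It follows that $g$ carries each generator of $\gm_\rho^\Sigma\subset\T^\Sigma(U)$ to the corresponding generator of $\gm_\rho^\Sigma\subset\T^\Sigma(V)$, so $g\bigl(S^D(U)[\gm_\rho^\Sigma]\bigr)\subset S^D(V)[\gm_\rho^\Sigma]=S^D(V)[\gm_\rho]$ by (a).

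The main obstacle is the middle step of (a): translating the purely algebraic condition (Hecke eigenvalues matching $\rho$ at a cofinite set of primes) into the eigenvalue identities at the remaining finitely many primes in $\Sigma'\setminus\Sigma$.  This requires the full automorphic toolkit --- Jacquet-Langlands, strong multiplicity one, and local-global compatibility for $\rho_\pi$ at unramified primes --- together with the Chebotarev-plus-Brauer-Nesbitt determination of a semisimple two-dimensional mod~$\ell$ Galois representation by its Frobenius traces.  Once this is in place, both parts of the lemma follow by straightforward bookkeeping.
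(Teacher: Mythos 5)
For part (b) your argument matches the paper's: enlarge $\Sigma$ so that $g_\gq \in U_\gq = V_\gq = \GL_2(\CO_{K,\gq})$ for $\gq\not\in\Sigma$, whence $g$ commutes with $T_\gq$ and $S_\gq$; the rest follows from (a).

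For part (a), however, your route is genuinely different from the paper's and has a gap.  The paper constructs (by the standard gluing argument using the $\rho_\pi$'s and the irreducibility of $\rho$) a Galois representation $\rho':G_K\to\GL_2(\T^{\Sigma'}(U)_{\gm_\rho^{\Sigma'}})$ satisfying the Eichler--Shimura relation $T_\gp = S_\gp\tr\rho'(\Frob_\gp)$ and $\N(\gp)=S_\gp\det\rho'(\Frob_\gp)$ \emph{as endomorphisms of the localized module}.  Reducing modulo $\gm_\rho^{\Sigma'}$ then shows the extra generators of $\gm_\rho^\Sigma$ act by zero on the torsion subspace, which is exactly what is needed.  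Your argument instead lifts an individual eigenvector $f$ to a characteristic-zero $\pi$ and invokes local--global compatibility; but this only establishes that the $T_\gp$-eigenvalue of each such $\pi$ reduces to $S_\gp\tr\rho(\frob_\gp)$.  It does \emph{not} establish that $T_\gp$ acts literally by that scalar on the $\Flbar$-vector space $S^D(U)[\gm_\rho^{\Sigma'}]$.  That space can receive contributions (through denominators in an integral basis) from several congruent $\pi$'s, and a priori $T_\gp - S_\gp\tr\rho(\frob_\gp)$ could act nilpotently rather than as zero; ruling this out is precisely the content of the integral Eichler--Shimura relation.  There is also a smaller imprecision in the intermediate step: strong multiplicity one is a characteristic-zero statement, whereas the eigensystem of $f$ lives in $\Flbar$, so ``a unique $\pi$'' and ``$f\in\pi^U$'' need to be replaced by a Deligne--Serre lifting argument together with the acknowledgement that the lift is not unique.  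Your closing paragraph correctly identifies that ``translating the algebraic condition at a cofinite set of primes into eigenvalue identities at the remaining primes'' is the crux, but the mechanism you sketch (Jacquet--Langlands, multiplicity one, Chebotarev plus Brauer--Nesbitt) only controls eigenvalues mod $\ell$ for each lift separately; it does not propagate the congruence to an identity of operators on $S^D(U)[\gm_\rho^{\Sigma'}]$.  The construction of $\rho'$ over the Hecke algebra is what supplies that missing step.
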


\begin{proof} a) We may assume $\Sigma' = \Sigma\cup\{\gp\}$ for some $\gp\not\in\Sigma$ and 
that $\gm_\rho^{\Sigma'}\neq \T^{\Sigma'}(U)$. Since $\rho$ is irreducible, a standard argument
using the representations $\rho_\pi$ lifting $\rho$ gives a representation
$$\rho':G_K \to \GL_2(\T^{\Sigma'}(U)_{\gm_\rho^{\Sigma'}})$$
lifting $\rho$ such that 
$$T_\gp = S_\gp\tr(\rho'(\Frob_\gp))\quad\mbox{and}\quad
 \N(\gp) = S_\gp\det(\rho'(\Frob_\gp))$$
as endomorphisms of $S^D(U)_{\gm_\rho^{\Sigma'}}$. It follows
that $S^D(U)[\gm_\rho^{\Sigma}] = S^D(U)[\gm_\rho^{\Sigma'}]$.

b) Choosing $\Sigma$ sufficiently large that $\GL_2(\CO_{K,\gp})\subset V$
and $g_\gp \in \GL_2(\CO_{K,\gp})$ for all $\gp\not\in\Sigma$, we see that
$g$ commutes with $T_\gp$ for all $\gp\not\in\Sigma$.
\end{proof}

We can now consider the direct limit over $U$ of the spaces
$S^D(U)[\gm_\rho]$; by the lemma, this makes sense and yields a
representation $S^D[\gm_\rho]$ of $D_f^\times$.  We can now state
an analogue of Emerton's local-global compatibility 
conjecture~\cite{emerton:draft-a}.
\begin{conjecture} \label{conj:definite} Suppose that $K$ is a totally real field,
$$\rho:G_K\to\GL_2(\Flbar)$$
is a continuous, irreducible and totally odd representation,
and $D$ is a totally definite quaternion algebra over $K$.
Then the $\Flbar$-representation $S^D[\gm_\rho]$ of $D_f^\times$ is isomorphic to
a restricted tensor product
$\otimes'_{\gp} \pi_\gp$ where $\pi_\gp$ is a smooth admissible representation
of $D_\gp^\times$ such that
\begin{itemize}
\item if $\gp$ does not divide $\ell$, then 
$\pi_\gp \cong \pi^{D_\gp}(\rho|_{G_{K_\gp}})$;
\item if $\gp|\ell$ then $\pi_\gp\neq 0$; moreover if $K$ and $D$ are unramified at $\gp$, and
$\sigma$ is an irreducible $\Flbar$-representation of
 $\GL_2(\CO_{K,\gp})$, then $\Hom_{\GL_2(\CO_{K,\gp})}(\sigma, \pi_\gp) \neq 0$ if and only
 if $\sigma \in W_\gp(\rho^\vee|_{G_{K_\gp}})$.
\end{itemize}
\end{conjecture}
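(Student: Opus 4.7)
The plan is to mimic, as closely as possible, Emerton's strategy from \cite{emerton:draft-a}, where the analogous factorization for $K=\Q$ and $D$ split was established under technical hypotheses. The argument proceeds in three logical steps: establishing the factorization of $S^D[\gm_\rho]$ as a restricted tensor product of local factors, identifying those factors at primes $\gp\nmid\ell$, and verifying the non-vanishing and socle description at primes $\gp\mid\ell$.

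First, I would establish the factorization. By the Jacquet--Langlands correspondence, every automorphic representation contributing (after base change to $\Qlbar$) to $S^D$ has a canonical tensor product decomposition, and $S^D[\gm_\rho]$ is the direct summand cut out by the system of Hecke eigenvalues associated to $\rho$. A spectral decomposition argument over $\Qlbar$, together with the finiteness of $S^D(U)[\gm_\rho]$ for each $U$ (visible from Lemma~\ref{lemma:Hecke}) and the existence of sufficiently many lifts of $\rho$ to characteristic zero, would reduce the factorization statement to local claims. Non-vanishing of $S^D[\gm_\rho]$ itself requires the weak conjecture for $K$: granted $\rho\sim\ol{\rho}_\pi$ for some cuspidal $\pi$ of weight $(\vec{2},0)$, one uses level raising (Lemma~\ref{taylor}) to find $\pi'$ that is Steinberg at enough auxiliary primes to transfer to~$D^\times$ via Jacquet--Langlands.

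For $\gp\nmid\ell$, the local factor $\pi^{D_\gp}(\rho|_{G_{K_\gp}})$ was constructed precisely so that it can be recognised via its relationship with reductions of characteristic zero local factors of lifts. In the split case this is Theorem~\ref{thm:emerton} applied directly; in the ramified case one combines the classical Jacquet--Langlands correspondence, Proposition~\ref{prop:vigneras}, and the exceptional definitions made when $q\equiv\pm 1\bmod\ell$, which were designed to account for the multiplicity phenomena of Ribet~\cite{ribet:psvolume} and Yang~\cite{yang}. For $\gp\mid\ell$, the socle description is essentially tautologically equivalent to Conjecture~\ref{conj:weight} via the translation furnished by Corollary~\ref{cor:equiv} and Proposition~\ref{prop:types}: a Serre weight $\sigma$ is a Jordan--H\"older factor of the $\GL_2(\CO_{K,\gp})$-socle of $\pi_\gp$ if and only if $\rho^\vee$ is modular of weight $\sigma$ with some companion choice of weight at the other primes over~$\ell$. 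The dual on $\rho$ is forced by our use of Jacobians (cf.\ Proposition~\ref{prop:classical}), and one must check this is consistent with the local recipe defining $W_\gp$.

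The hard part is that this conjecture is genuinely open, and each of the three steps carries a substantial obstruction. The weak conjecture over a general totally real $K$ is not known; the socle description for $\gp\mid\ell$ is exactly the weight conjecture whose proof remains a work in progress (Gee, Gee--Kisin, Gee--Savitt); and perhaps most fundamentally, the lack of a mod $\ell$ local Langlands correspondence for $\GL_2$ over proper extensions of $\Q_\ell$, illustrated dramatically by Breuil--Paskunas~\cite{breuil_paskunas}, means that the full representation $\pi_\gp$ at $\gp\mid\ell$ cannot currently be described intrinsically even conjecturally, only its socle under a maximal compact. Rigorous establishment of the factorization, even conditional on the weight conjecture, would moreover require strong multiplicity-one results for $\rho$ in the cohomology of Shimura curves, which are available in many but not all cases over totally real fields; the anticipated outcome would therefore be a theorem conditional on (a) the weak conjecture for~$K$, (b) Conjecture~\ref{conj:weight}, and (c) appropriate multiplicity and local-global compatibility inputs, analogous to those used in \cite{emerton:draft-a}.
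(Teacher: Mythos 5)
This is a \emph{conjecture} in the paper (Conjecture~\ref{conj:definite}), not a theorem, so there is no proof of it in the paper to compare your proposal against: the authors state it, discuss its motivation, and only derive consequences from it (Propositions~\ref{indefinite:weight} and~\ref{prop:conductor}, Corollaries~\ref{cor:finite} and~\ref{cor:stw}). You have, correctly, not produced a proof either; your write-up is a conditional roadmap, and its closing paragraph honestly records that each stage is a genuinely open problem. That assessment matches the paper's own discussion: the introduction and the remark following the conjecture emphasize precisely the three obstructions you name — the weak conjecture over general $K$, the weight conjecture, and the absence of a mod~$\ell$ local Langlands correspondence for $\GL_2$ over extensions of $\Q_\ell$ (citing Breuil--Paskunas). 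Your observation that the dual $\rho^\vee$ in the socle condition is forced by the use of Jacobians and the normalization in Proposition~\ref{prop:classical} is a correct reading of the paper's conventions, and your pointer to the role of Ribet and Yang in the exceptional cases of $\pi^{D_\gp}$ at primes $\gp\nmid\ell$ is likewise consistent with the text. In short, you have given a sensible strategic outline and an accurate account of why it cannot currently be closed, which is the most one can say about a statement the paper itself presents as conjectural.
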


\begin{remark} In the case $K=\Q$ and $D=M_2(\Q)$ and $\rho|_{G_{\Q_\ell}}$ is not a twist
of a representation of the form $\smallmat{1}{*}{0}{\ol{\omega}}$, then
Emerton predicts the precise form for $\pi_\ell$ in \cite{emerton:draft-a} as well
and goes on to prove his conjecture under technical hypotheses (including the assumption
that $\ell > 2$).  It is reasonable
to expect that $\pi_\gp$ is of the form predicted there whenever 
$D_\gp \cong M_2(\Q_\ell)$ and $\rho|_{G_{K_\gp}}$ is as above.

Under the hypotheses that $K_\gp$ is an unramified extension of $\Q_\ell$ and $D$ is split
at $\gp$, the relation with $W_\gp$ can be viewed as a description of the $\GL_2(\CO_{K,\gp})$-socle
of $\pi_\gp$ (which in most cases is expected to be multiplicity-free). 
In many cases, Breuil and Paskunas~\cite{breuil_paskunas} construct infinitely many 
(isomorphism classes of) representations with the desired socle.  These representations
are irreducible whenever $\rho|_{G_{K_\gp}}$ is, and are expected to have finite length
in general.  However, the multitude of possibilities raises the question
of whether one should still expect $\pi_\gp$ to be completely determined by
$\rho|_{G_{K_\gp}}$.

Some work has also been done towards defining a conjectural set of weights
$W_\gp$ when $F_\gp$ is a ramified extension of $\Q_\ell$ and $D_\gp$ is split.
In particular, Schein~\cite{schein:ramified} gives a definition of $W_\gp$
when $\rho|_{G_{K_\gp}}$ is tamely ramified, and Gee~\cite{gee:automorphic}
gives a more general but less explicit definition than ours.

In the case where $D_\gp$ is not split, Gee and Savitt \cite{gs_new} have described
the set of weights, again in a less explicit form in general.  We remark however
that one can show in this case that $\pi_\gp$, if it exists, has infinite length.
\end{remark}

Suppose now that $D$ is split at exactly one infinite place.
We exclude the case $D=\M_2(\Q)$ already considered by Emerton.
We now define $S^D(U) = H^1(Y_{U,\bar{K}},\Flbar)$ and $S^D = \lim S^D(U)$,
the limit taken over all open compact $U\subset D_f^\times$
with respect to the maps on cohomology induced by the natural
projections $Y_V \to Y_U$ for $U\subset V$. If $V \subset gUg^{-1}$,
then we have a $K$-morphism $Y_V \to Y_U$ corresponding to
right multiplication by $g$ on complex points, inducing a homomorphism
$S^D(U)\to S^D(V)$ which we also denote by $g$, making $S^D$
a $G_K\times D_f^\times$-module. However the natural map
$S^D(U) \to (S^D)^U$ is not necessarily an isomorphism.

For $g\in D_f^\times$, and $U$,$V$ open compact subgroups of $D_f^\times$,
we have the double coset operator $[VgU]:S^D(U) \to S^D(V)$ defined as the
composite $S^D(U) \to S^D(V') \to S^D(V)$ where $V' = V \cap gUg^{-1}$,
the first map is defined by $g$, and the second is the trace morphism
times the integer $[V:V']/\deg(Y_{V'}/Y_V)$. We can thus define Hecke operators
$T_\gp$ for $\gp\not\in\Sigma$, algebras $\T^\Sigma(U)\subset \End_{\Flbar}(S^D(U))$, ideals
$\gm_\rho^\Sigma$ and subspaces $S^D(U)[\gm_\rho^\Sigma]\subset S^D(U)$,
just as in the case of totally definite $D$. 
The analogue of Lemma~\ref{lemma:Hecke} is proved in exactly the same way,
now yielding a representation $S^D[\gm_\rho]$ of $G_K \times D_f^\times$.

\begin{conjecture} \label{conj:indefinite} Suppose that $K$ is a totally real field,
$$\rho:G_K\to\GL_2(\Flbar)$$
is a continuous, irreducible and totally odd representation,
and $D$ is a quaternion algebra over $K$ split at exactly one real place.
Then the $\Flbar$-representation $S^D[\gm_\rho]$ of $G_K\times D_f^\times$ is isomorphic to
$\rho\otimes \left(\otimes'_{\gp} \pi_\gp\right)$ where $\pi_\gp$ is a smooth admissible representation
of $D_\gp^\times$ such that
\begin{itemize}
\item if $\gp$ does not divide $\ell$,
then $\pi_\gp \cong \pi^{D_\gp}(\rho|_{G_{K_\gp}})$;
\item if $\gp|\ell$ then $\pi_\gp\neq 0$; moreover if $K$ and $D$ are unramified at $\gp$, and
$\sigma$ is an irreducible $\Flbar$-representation of
 $\GL_2(\CO_{K,\gp})$, then $\Hom_{\GL_2(\CO_{K,\gp})}(\sigma, \pi_\gp) \neq 0$ if and only
 if $\sigma \in W_\gp(\rho^\vee|_{G_{K_\gp}})$.
\end{itemize}
\end{conjecture}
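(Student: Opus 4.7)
The plan is to break the conjecture into three independent tasks: (i) establish a ``multiplicity one modulo $\gm_\rho$'' statement showing that $S^D[\gm_\rho]$ is of the form $\rho \otimes \pi$ for some smooth admissible $D_f^\times$-module $\pi$; (ii) factor $\pi$ as a restricted tensor product and identify each local factor at primes $\gp \nmid \ell$; and (iii) use Conjecture~\ref{conj:weight} and Corollary~\ref{cor:equiv} to extract the desired information about $\pi_\gp$ at $\gp \mid \ell$.

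For step (i), I would adapt the strategy of \cite{emerton:draft-a} to the quaternionic setting over $K$. Concretely, one fixes a sufficiently small compact open $U \subset D_f^\times$, considers $H^1(Y_{U,\bar K}, \Flbar)_{\gm_\rho}$, and shows that its underlying $G_K$-module is a direct sum of copies of $\rho$. This uses the now-standard modularity-lifting and $R = \T$ machinery over totally real fields (Kisin, Fujiwara, Gee) together with a Taylor--Wiles patching argument transported from totally definite $D$ via Jacquet--Langlands --- the same kind of transfer used throughout \S\ref{sec:setup} by way of Lemma~\ref{taylor}. Passing to the limit over $U$ yields the factorization $S^D[\gm_\rho] \cong \rho \otimes \pi$, and admissibility plus absolute irreducibility of $\rho$ then give the tensor-product factorization $\pi \cong {\otimes'_\gp} \pi_\gp$ of smooth admissible representations.

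For step (ii), at each $\gp \nmid \ell$ one identifies $\pi_\gp$ by deforming: lift $\rho$ to many characteristic-zero $\rho_{\tilde\pi}$ (using Lemma~\ref{taylor} to arrange useful types at $\gp$), apply classical local-global compatibility of Carayol/Taylor/Jarvis to see that the $D_\gp^\times$-action on the $\rho_{\tilde\pi}$-isotypic subspace of $H^1_p$ is $\pi^{D_\gp}(\rho_{\tilde\pi})$ (using Jacquet--Langlands when $D_\gp$ is non-split), and reduce modulo $\ell$. The axiomatic characterization in Theorem~\ref{thm:emerton} and the Vign\'eras-based construction given just above Conjecture~\ref{conj:indefinite} are designed precisely so that this reduction procedure produces $\pi^{D_\gp}(\rho|_{G_{K_\gp}})$. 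The delicate case is $q \equiv -1 \bmod \ell$ with $\rho|_{G_{K_\gp}}$ a twist of $\smallmat{1}{*}{0}{\ol\omega^{-1}}$: here one must verify that the extension class defining $\pi_\gp$ is the image of $c_\rho$ under the isomorphism of Remark~\ref{rmk:unified}, which reduces to a refinement of the Ribet/Yang analysis \cite{ribet:psvolume, yang} of the action of a supersingular Frobenius on $H^1$ of the Shimura curve at primes of bad reduction. For step (iii), non-vanishing of $\pi_\gp$ at $\gp \mid \ell$ follows from the hypothesis that $\rho$ is modular together with Corollary~\ref{cor:equiv}, and the socle description is a direct translation of Conjecture~\ref{conj:weight} applied to $\rho^\vee$ (the dual reflecting the twist between Jacobians and cohomology already noted after Proposition~\ref{prop:classical}).

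The main obstacle is step (i). Existing $R = \T$ theorems over totally real fields require a host of technical hypotheses: that $\rho|_{G_{K_\gp}}$ be sufficiently generic for $\gp \mid \ell$, that the residual image be large, that $\ell > 2$, that $[K(\zeta_\ell):K]$ be large, and so on. Outside this generic regime the local deformation rings can have components of varying dimension or be non-reduced, and the patched module may fail to be free over the patched deformation ring, which is what multiplicity one ultimately reduces to. Consequently, the conjecture in its full stated generality --- including $\ell = 2$, the dihedral case, and the boundary weights where the recipe of \S\ref{sec:recipe} was modified by hand --- appears to require genuinely new inputs beyond present technology, even granting progress on Conjecture~\ref{conj:weight} itself.
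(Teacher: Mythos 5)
This statement is Conjecture~\ref{conj:indefinite}, which the paper states as an open conjecture and does not prove; the material following it (Lemmas~\ref{lemma:esblr} and~\ref{lemma:elliptic}, Propositions~\ref{indefinite:weight} and~\ref{prop:conductor}) derives consequences and interrelations, not a proof. So there is no proof in the paper to compare against, and your own closing paragraph already concedes that a proof ``appears to require genuinely new inputs beyond present technology.'' What you have written is a strategy sketch, not a proof.

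Moreover the sketch misallocates the difficulty. Your step~(i), that $S^D[\gm_\rho]\cong\rho\otimes\pi$ for a smooth $D_f^\times$-module $\pi$, is precisely Lemma~\ref{lemma:esblr}, which the paper proves by an elementary combination of Schur's lemma, the Eichler--Shimura relation, and the Boston--Lenstra--Ribet theorem~\cite{blr}; no $R=\T$ machinery or Taylor--Wiles patching is needed or used for this. But you then assert that ``admissibility plus absolute irreducibility of $\rho$'' give the factorization $\pi\cong\otimes'_\gp\pi_\gp$, and this is false: a smooth admissible representation of a restricted product of $p$-adic groups does not automatically decompose as a restricted tensor product, and establishing that it does here, with the prescribed local factors, is the entire content of the conjecture. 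Your step~(ii) asserts rather than proves that reduction mod~$\ell$ of classical local--global compatibility yields exactly $\pi^{D_\gp}(\rho|_{G_{K_\gp}})$ and no more --- the ``no more'' is property~(3) of Theorem~\ref{thm:emerton}, and it is where all the depth lies in Emerton's argument for $K=\Q$ (via completed cohomology and the $p$-adic local Langlands correspondence, neither of which is available in the quaternionic or general totally real setting). And your step~(iii) invokes Conjecture~\ref{conj:weight}, which is itself open and which the paper proves is a \emph{consequence} of Conjecture~\ref{conj:indefinite} (Proposition~\ref{indefinite:weight}); so even granting~(ii), you would be arguing in the wrong direction, deducing the stronger statement from the weaker one.
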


By the following lemma, Conjecture~\ref{conj:indefinite} could be reformulated
as saying that
the representation $\Hom_{\Flbar[G_K]}(\rho, S^D)$ of $D_f^\times$ has the
prescribed form, as in \cite{emerton:draft-a}.
\begin{lemma} \label{lemma:esblr}
The evaluation map $\rho\otimes_{\Flbar}\Hom_{\Flbar[G_K]}(\rho,S^D)\to S^D$
induces a $G_K\times D_f^\times$-linear isomorphism:
$$\rho \otimes_{\Flbar}\Hom_{\Flbar[G_K]}(\rho,S^D) \iso S^D[\gm_\rho].$$
\end{lemma}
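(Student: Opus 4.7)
Set $V=\Hom_{\Flbar[G_K]}(\rho,S^D)$, endowed with its natural $D_f^\times$-action by $(g\phi)(v)=g\cdot\phi(v)$, and let $\mu:\rho\otimes_{\Flbar}V\to S^D$ denote the evaluation map. The map $\mu$ is manifestly $G_K\times D_f^\times$-equivariant (with $D_f^\times$ acting trivially on $\rho$), so it suffices to prove that the image of $\mu$ is exactly $S^D[\gm_\rho]$ and that $\mu$ is injective. My plan has three steps: (a) show $\mathrm{im}(\mu)\subseteq S^D[\gm_\rho]$; (b) show that $S^D[\gm_\rho]$ is $\rho$-isotypic as a $\Flbar[G_K]$-module; (c) deduce the isomorphism formally from (a) and (b).

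For (a), fix $\phi\in V$; since $\rho$ is irreducible, $\phi(\rho)$ is either zero or a copy of $\rho$ inside $S^D$. Because $\rho$ is absolutely irreducible (we are over $\Flbar$), Schur's lemma implies that every Hecke operator in $\T^\Sigma(U)$ acts on $\phi(\rho)$ as an $\Flbar$-scalar. The Eichler--Shimura relation $\frob_\gp^2-T_\gp\frob_\gp+\N(\gp)S_\gp=0$ on $H^1(Y_{U,\Kbar},\Flbar)$, combined with the Cayley--Hamilton identity for $\rho(\frob_\gp)$, pins these scalars down: for every $\gp$ outside a density-zero set one obtains precisely the values that make the generators $T_\gp-S_\gp\tr\rho(\frob_\gp)$ and $\N(\gp)-S_\gp\det\rho(\frob_\gp)$ of $\gm_\rho$ act as zero on $\phi(\rho)$. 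A Cebotarev argument then shows that $\gm_\rho$ annihilates $\phi(\rho)$.

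For (b), let $W$ be an irreducible $\Flbar[G_K]$-subquotient of $S^D[\gm_\rho]$. The same Eichler--Shimura plus Cayley--Hamilton argument shows that $\tr W(\frob_\gp)=\tr\rho(\frob_\gp)$ and $\det W(\frob_\gp)=\det\rho(\frob_\gp)$ for a set of $\gp$ of density one; by Brauer--Nesbitt and Cebotarev, $W\cong\rho$. Semisimplicity of $S^D[\gm_\rho]$ as a $G_K$-module is the delicate point: it does \emph{not} follow from the Jordan--H\"older information alone. One argues either by lifting to characteristic zero (the Hecke-localized piece of $H^1(Y_{U,\Kbar},\Qlbar)$ is semi-simple as a $G_K$-module, being a sum of the absolutely irreducible $\rho_\pi$'s provided by Jacquet--Langlands/Carayol, and this semisimplicity descends to the mod-$\ell$ subspace cut out by $\gm_\rho$ because the universal deformation $\rho':G_K\to\GL_2(\T^\Sigma(U)_{\gm_\rho})$ of Lemma~\ref{lemma:Hecke} constrains the $G_K$-module structure), or equivalently by invoking a freeness/multiplicity-one statement à la Mazur--Ribet--Wiles for $S^D(U)_{\gm_\rho}$ as a module over the completed Hecke algebra.

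Granted (a) and (b), step (c) is formal. By (b) we may write $S^D[\gm_\rho]\cong\rho\otimes_{\Flbar}V'$ as $\Flbar[G_K]$-modules with $V'=\Hom_{\Flbar[G_K]}(\rho,S^D[\gm_\rho])$, and the induced evaluation $\rho\otimes V'\to S^D[\gm_\rho]$ is an isomorphism by Schur's lemma. On the other hand (a) implies $V=\Hom_{\Flbar[G_K]}(\rho,S^D)=\Hom_{\Flbar[G_K]}(\rho,S^D[\gm_\rho])=V'$, since every $G_K$-linear map from $\rho$ into $S^D$ lands in the $\rho$-isotypic submodule, which by (a) and (b) coincides with $S^D[\gm_\rho]$. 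The $D_f^\times$-equivariance is automatic because the action on $V$ is induced from that on $S^D$. The principal technical obstacle is the semisimplicity assertion in (b); everything else is bookkeeping built on Schur, Cebotarev, and Eichler--Shimura.
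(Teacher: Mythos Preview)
Your steps (a) and (c) are essentially correct and match the paper's argument. The gap is in (b), where neither of your proposed routes to semisimplicity works.

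The characteristic-zero lifting argument fails: semisimplicity of $H^1(Y_{U,\Kbar},\Qlbar)_{\gm_\rho}$ as a $G_K$-module does not descend to the $\gm_\rho$-torsion in $H^1(Y_{U,\Kbar},\Flbar)$. The existence of $\rho':G_K\to\GL_2(\T^\Sigma(U)_{\gm_\rho})$ from Lemma~\ref{lemma:Hecke} is a statement about traces of Frobenii in the Hecke algebra, not about the $\Flbar[G_K]$-module structure of $S^D(U)$; to exploit $\rho'$ in the way you suggest you would need $S^D(U)_{\gm_\rho}$ to be free over $\T^\Sigma(U)_{\gm_\rho}$ with $G_K$ acting through $\rho'$, and that is precisely the multiplicity-one statement you offer as the alternative. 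Invoking that directly is both overkill and not available in the needed generality for these Shimura curves without substantial further work.

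The paper instead cites the result of Boston--Lenstra--Ribet~\cite{blr}, a purely algebraic lemma which in this setting says: since $\rho$ is absolutely irreducible and every element of $G_K$ satisfies its characteristic polynomial (with respect to $\rho$) on $S^D(U)[\gm_\rho]$, the latter is a direct sum of copies of $\rho$. That hypothesis is exactly what your step-(a) ingredients supply: on the $\gm_\rho$-kernel the Eichler--Shimura relation $\Frob_\gp^2 - S_\gp^{-1}T_\gp\Frob_\gp + \N(\gp)S_\gp^{-1}=0$ combined with $T_\gp=S_\gp\tr\rho(\frob_\gp)$ and $\N(\gp)=S_\gp\det\rho(\frob_\gp)$ becomes the characteristic-polynomial identity for $\rho(\frob_\gp)$, and Cebotarev plus continuity extends it to all of $G_K$. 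So you already had the right inputs; they should be fed into~\cite{blr} rather than into a lifting or Hecke-freeness argument.
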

\begin{proof} 
It suffices to prove the lemma with $S^D$ replaced by $S^D(U)$ and take direct
limits. Since $\rho$ is irreducible, the evaluation map
$$\rho\otimes_{\Flbar}\Hom_{\Flbar[G_K]}(\rho,S^D(U))\to S^D(U)$$
is injective by Schur's Lemma.
Using the Eichler-Shimura relations on $Y_U$ (in particular, that $\Frob_\gp^2 - S_\gp^{-1}T_\gp\Frob_\gp
+ \N(\gp)S_\gp^{-1} = 0$ on $H^1(Y_{U,\ol{K}},\Flbar)$ for all $\gp\not\in\Sigma$), one shows
as in the proof of Prop.~3.2.3 of \cite{breuil_emerton} that the image lies
in $S^D(U)[\gm_\rho]$. Finally, the main result of \cite{blr} shows that
$S^D(U)[\gm^\rho]$ is isomorphic to a direct sum of copies of $\rho$, hence
the map is surjective.
\end{proof}

Next we show how one can usually recover $S^D(U)[\gm_\rho]$ from
$S^D[\gm_\rho]$. The
caveat (an observation going back to Ribet) is
that it is not quite true that ``all errors are Eisenstein''. Let
us say that a representation $\rho$ is {\em badly dihedral} if $\rho$
is induced from a character of $G_{K'}$ where $K'$
is a totally imaginary
quadratic extension of $K$ of the form $K(\delta)$ for
some $\delta$ such that $\delta^\ell\in K$.
For $\ell>2$ it is not difficult to check that if $\rho$
is badly dihedral then $K$ must contain the maximal real
subfield $\Q(\mu_{\ell})^+$ of $\Q(\mu_{\ell})$ and that $K'=K(\mu_\ell)$
(because the field $K'$ contains $\zeta:=\overline{\delta}/\delta$ with
$\overline{\delta}$ the Galois conjugate of $\delta$, and $\zeta\not=1$
is an $\ell$th root of unity). In particular if $\ell>3$ and $\ell$ is
unramified in $K$, then
there will be no badly dihedral representations at all.
However for $\ell=2$ there may be more than one possibility for $K'$
(but only finitely many).

\begin{lemma} \label{lemma:elliptic}
The natural map $S^D(U)[\gm_\rho]\to (S^D[\gm_\rho])^U$ is injective;
moreover it is an isomorphism if either
\begin{itemize}
\item $Y_U$ has no elliptic points of order a multiple of $\ell$ or
\item $\rho$ is not badly dihedral.
\end{itemize}
\end{lemma}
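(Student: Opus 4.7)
Both assertions will follow from an analysis of the pullback $H^1(Y_{U,\ol{K}},\Flbar)\to H^1(Y_{V,\ol{K}},\Flbar)^{U/V}$ for open normal $V\triangleleft U$ sufficiently small, combined with a passage to the limit over such $V$ (a cofinal system in the colimit computing $(S^D)^U$). Write $H=U/V$. The low-degree exact sequence of the Hochschild--Serre spectral sequence attached to the generically étale cover $\phi:Y_V\to Y_U$ exhibits the kernel of pullback as a quotient of $H^1(H,H^0(Y_{V,\ol{K}},\Flbar))$ and, when $\phi$ is étale, the cokernel as a subspace of $H^2(H,H^0(Y_{V,\ol{K}},\Flbar))$. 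In the general (ramified) case, one also has an extra cokernel term $H^0(Y_{U,\ol{K}},R^1\phi_*\Flbar)$ coming from the Leray spectral sequence, supported on the elliptic locus.

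The ``Eisenstein'' terms $H^p(H,H^0(Y_{V,\ol{K}},\Flbar))$ have $G_K$-action factoring through an abelian quotient by Lemma~\ref{eisenstein}(a) (and the comparison between $X_U$ and $Y_U$ in the split case is similarly controlled by Lemma~\ref{eisenstein}(c)). Since $\rho$ is two-dimensional and irreducible, these terms vanish after localizing at $\gm_\rho$. This gives injectivity immediately (passing to the limit in $V$), and also the isomorphism statement when $Y_U$ has no elliptic points of $\ell$-divisible order, since in that case $V$ can be chosen so that $\phi$ is étale and there is no further ramified contribution to consider.

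For the second case, assume $\rho$ is not badly dihedral. The extra cokernel term $H^0(Y_{U,\ol{K}},R^1\phi_*\Flbar)$ is supported at elliptic points of $Y_U$ whose stabilizer has order divisible by $\ell$, and a local computation identifies the stalks of $R^1\phi_*\Flbar$ with $H^1$ of the local inertia group. Such stabilizers arise from embeddings into $D$ of orders containing a primitive $\ell$-th root of unity, hence from an embedding into $D$ of a totally imaginary quadratic extension $K'=K(\delta)$ of $K$ with $\delta^\ell\in K$; the resulting $G_K$-module structure on the global sections is then induced from $G_{K'}$. Consequently any $\rho$ appearing in this cokernel must be badly dihedral, contradicting the hypothesis. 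The main obstacle is the last step---the local identification of $R^1\phi_*\Flbar$ at the elliptic points and the verification that the Galois action on its global sections realizes only representations induced from $G_{K'}$; the case $\ell=2$ is more delicate because several $K'$ may arise, but only finitely many, and the same argument applies to each.
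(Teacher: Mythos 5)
Your high-level strategy—reduce to the finite-level pullback $H^1(Y_{U,\ol K},\Flbar)\to H^1(Y_{V,\ol K},\Flbar)^{U/V}$, show the error terms are either Eisenstein or ``badly dihedral''—matches the paper, but the decomposition you propose is different from theirs, and as written it contains a genuine error at the crucial place.

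The problem is the appeal to $R^1\phi_*\Flbar$ for the finite morphism $\phi:Y_V\to Y_U$: finite morphisms are exact for the \'etale topology, so $R^1\phi_*\Flbar=0$ and the ``extra cokernel term'' you describe does not exist in the form you state. What you seem to have in mind is the Leray spectral sequence for the coarse-space map $[Y_V/H]\to Y_U$ from the quotient stack (whose $R^1$ does have stalks $H^1(H_P,\Flbar)$ at elliptic points), but then one must also compare $H^1(Y_U,\Flbar)$ with $H^1([Y_V/H],\Flbar)$. The paper sidesteps this entirely: it removes the elliptic locus $Z_U$, applies Hochschild--Serre to the honest \'etale cover $W_V\to W_U$ of the complement, and then uses excision and cohomological purity to compare $H^1(Y_U)$ with $H^1(W_U)$, obtaining the concrete term $H^2_{Z_{U,\ol K}}(Y_{U,\ol K},\Flbar)\cong\bigoplus_{P\in Z_U}\Ind_{G_{K(P)}}^{G_K}\Flbar(1)$ and pinning down exactly when the map to $H^2_{Z_{V,\ol K}}$ kills a summand (namely when $\ell\mid e(P'/P)$). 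Your stack-quotient route can likely be made to work, but it has to be set up correctly, and the end result will be the same excision-type computation in different clothing.

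More importantly, the step you flag as ``the main obstacle''---showing that the Galois module supported at $\ell$-elliptic points is built out of representations induced from $G_{K'}$---is precisely where the paper's nontrivial input lives, and you do not supply it. The paper argues: an elliptic point of order divisible by $\ell$ is fixed by some $\delta\in D$ of order $\ell$ in $D^\times/K^\times$, so $K':=K[\delta]$ is a totally imaginary quadratic extension of $K$ with $\delta^\ell\in K$; the point is then a CM/special point for the torus $\Res_{K'/\Q}\Gm$, and by Lemma~3.11 of \cite{cornut-vatsal} it is defined over an \emph{abelian} extension of $K'$. It is this last CM-theoretic fact that makes $\Ind_{G_{K(P)}}^{G_K}\Flbar(1)$ a sum of inductions of characters from $G_{K'}$, so that any irreducible two-dimensional subquotient is badly dihedral. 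Without this input (or an equivalent), knowing only that the stabilizer corresponds to an embedding of $K'$ into $D$ does not by itself tell you the Galois action on the global sections factors through inductions from $G_{K'}$; you still need to control the field of definition of the elliptic points. This is the gap in your sketch.
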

\begin{proof} It suffices to show that
if $V$ is any normal open compact subgroup of $U$ then the natural
map
$$H^1(Y_{U,\ol{K}},\Flbar) \to H^1(Y_{V,\ol{K}},\Flbar)^{U/V}$$
is injective after localising at $\gm_\rho$, and is an isomorphism
under the additional hypotheses. Equivalently we must show
that $\rho$ does not appear in the $\Flbar[G_K]$-semisimplification
of the kernel of the above map, and under the additional hypotheses
it does not appear in the cokernel either.

Let $Z_U$ denote the reduced closed subscheme of $Y_U$ defined by
its elliptic points and let $Z_V = (Z_U\times_{Y_U}Y_V)^\red$,
$W_U = Y_U-Z_U$ and $W_V = Y_V-Z_V$. Then the morphism $W_V \to W_U$
is \'etale with Galois group $\Gamma$ (a quotient of $U/V$), and the
Hochschild-Serre spectral sequence yields an exact
sequence:
$$\begin{array}{rl}
 0\to H^1(\Gamma,H^0(W_{V,\ol{K}},\Flbar)) &\to
 H^1(W_{U,\ol{K}},\Flbar)\\ & \to H^1(W_{V,\ol{K}},\Flbar)^U
 \to H^2(\Gamma,H^0(W_{V,\ol{K}},\Flbar)).\end{array}$$
The inclusion $W_U \to Y_U$ yields an exact sequence:
$$\begin{array}{rl}
H^1_{Z_{U,\ol{K}}}(Y_{U,\ol{K}},\Flbar) \to & H^1(Y_{U,\ol{K}},\Flbar) \\ 
\to & H^1(W_{U,\ol{K}},\Flbar) \to H^2_{Z_{U,\ol{K}}}(Y_{U,\ol{K}},\Flbar) 
\end{array}$$
By the excision theorem, $H^i_{Z_{U,\ol{K}}}(Y_{U,\ol{K}},\Flbar) = \bigoplus_{z\in Z_U(\bar{K})}
 H^i_{\{z\}}(Y_{U,\ol{K}},\Flbar)$. By the Betti-\'etale comparison theorem for example, we
 see that each $H^1_{\{z\}}(Y_{U,\ol{K}},\Flbar)=0$ and that each $H^2_{\{z\}}(Y_{U,\ol{K}},\Flbar)$
 is one-dimensional; moreover if $\psi:X'\to X$ is a non-constant morphism of smooth proper
 curves over $\ol{K}$ with $\psi(x') = x$, then the induced map 
 $H^2_{\{x\}}(X,\Flbar)\to H^2_{\{x'\}}(X',\Flbar)$ is trivial if the ramification degree $e(x'/x)$ is divisible by $\ell$ and it is an isomorphism otherwise.
 In particular, if $z\in Z_U(\ol{K})$ is defined over $L$, then the morphism
 $Y_{U,L} \to \P^1_{L}$ gotten from a uniformizer at $z$ induces an isomorphism
 $$\Flbar(1) = H^2_{\{0\}}(\P^1_{\ol{L}},\Flbar) \iso H^2_{\{z\}}(Y_{U,\ol{L}},\Flbar)$$
 of $G_{L}$-modules. It follows that $H^2_{Z_{U,\ol{K}}}(Y_{U,\ol{K}},\Flbar)
 \cong \bigoplus_{P\in Z_U} \Ind_{G_{K(P)}}^{G_K}\Flbar(1)$ as $G_K$-modules.
Combining this with the corresponding exact sequence arising from $W_V\to Y_V$ yields
a commutative diagram
$$\begin{array}{cccccc}
0 \to& H^1(Y_{U,\ol{K}},\Flbar) & \to &H^1(W_{U,\ol{K}},\Flbar)& \to &H^2_{Z_{U,\ol{K}}}(Y_{U,\ol{K}},\Flbar) \\
&\downarrow&&\downarrow&&\downarrow\\
0 \to& H^1(Y_{V,\ol{K}},\Flbar)^\Gamma& \to &H^1(W_{V,\ol{K}},\Flbar)^\Gamma& \to &H^2_{Z_{V,\ol{K}}}(Y_{V,\ol{K}},\Flbar)^\Gamma
\end{array}$$
such that the kernel of the rightmost vertical map is isomorphic to the direct sum 
of the $\Ind_{G_{K(P)}}^{G_K}\Flbar(1)$ over the $P\in Y_U$ whose ramification degree
in $Y_V$ is divisible by $\ell$. 
If $P$ is such an elliptic point, then it is fixed by
some $\delta\in D$ such that $\delta$ has order $\ell$
in $D^\times/K^\times$.
The extension
$K':=K[\delta]$ is a commutative integral domain within $D$, and it is
finite over $K$ and hence a field; moreover it must be a
quadratic extension of $K$, imaginary
at our preferred infinite place $\tau_0$ since $\delta$ has isolated fixed points in $\uhp^\pm$,
 and at the other infinite
places since $K'\subset D$ and hence $K'$ splits~$D$.
The elliptic point will then be a special point for the
Shimura curve $Y_U$ with respect to the torus $\Res_{K'/\Q}(\Gm)$
and by Lemma~3.11 of~\cite{cornut-vatsal} the elliptic point
will be defined over an abelian extension of $K'$.
Now under the additional hypotheses of the lemma
it follows that $\rho$
does not appear in the $\Flbar[G_K]$-semisimplification of 
the direct sum of the $\Ind_{G_{K(P)}}^{G_K}\Flbar(1)$ as above.

Recall from Lemma~\ref{eisenstein}
that the action of $G_K$ on $H^0(W_{V,\ol{K}},\Flbar) = H^0(Y_{V,\ol{K}},\Flbar)$, hence on the
kernel and cokernel of the middle vertical map, factors through an abelian quotient. 
Finally we deduce from the snake lemma that $\rho$ does not appear in the semisimplification
of the kernel of the leftmost vertical map, nor that of the cokernel under the additional
hypotheses.
\end{proof}

Finally we record some consequences of Conjectures~\ref{conj:definite} and~\ref{conj:indefinite}.
(Recall that Conjecture~\ref{conj:indefinite} is formulated for an arbitrary totally real $K$,
but that Conjecture~\ref{conj:weight} assumes $K$ is unramified at $\ell$.)
\begin{proposition}\label{indefinite:weight}
Conjecture~\ref{conj:indefinite} implies Conjecture~\ref{conj:weight}.
\end{proposition}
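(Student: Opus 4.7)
The plan is to pass both conditions---modularity of weight $V$ on the one hand, and $V\in W(\rho)$ on the other---through the mod~$\ell$ local-global compatibility predicted by Conjecture~\ref{conj:indefinite}, applied to a quaternion algebra $D/K$ split at exactly one real place and at all primes above $\ell$. By Corollary~\ref{cor:twist} we may twist $\rho$ freely, and by the explicit form of the local factors $\pi_\gp$ for $\gp\nmid\ell$ recalled before the conjecture, we may choose $D$ so that $\pi_\gp\ne 0$ for all finite $\gp\nmid\ell$. For the direction ``$\rho$ modular of weight $V$ via some $D_0$ $\Longrightarrow$ $V\in W(\rho)$'', a level-raising argument in the spirit of Lemma~\ref{taylor} lets us replace $D_0$ by $D$ (possibly at the cost of introducing an auxiliary Steinberg prime $\gq\nmid\ell$, which does not affect the weight).

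Having fixed $D$, one translates modularity of weight $V=\bigotimes_{\gp|\ell}V_\gp$ into a statement about $S^D[\gm_\rho]$. Writing $U=U^\ell\cdot\GL_2(\CO\otimes\Z_\ell)$ and $U'=\ker(U\to G)$, Lemma~\ref{kevinscontribution}(a) identifies modularity of weight $V$ with $\rho$ being a $G_K$-subquotient of $H^1(Y_{U,\Kbar},\CF_{V(1)})$. Lemma~\ref{fctk} and the Hochschild--Serre spectral sequence for $Y_{U'}\to Y_U$ rewrite this (modulo Eisenstein contributions eliminated by Lemma~\ref{eisenstein}) as the non-vanishing of
\[
\Hom_{G_K\times\GL_2(\CO\otimes\Z_\ell)}\bigl(\rho\otimes V^\vee\otimes\mu_\ell^{-1},\,S^D(U')\bigr).
\]
Passing to the limit over $U^\ell$ and localising at $\gm_\rho$, this becomes the non-vanishing of $\Hom_{G_K\times\GL_2(\CO\otimes\Z_\ell)}(\rho\otimes V^\vee\otimes\mu_\ell^{-1},S^D[\gm_\rho])$.

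Applying Lemma~\ref{lemma:esblr} and Conjecture~\ref{conj:indefinite}, $S^D[\gm_\rho]\cong\rho\otimes\bigotimes'_\gp\pi_\gp$, so this $\Hom$ factors as a tensor product of local $\Hom$'s. For $\gp\nmid\ell$ the $U^\ell_\gp$-invariants of $\pi_\gp$ are nonzero by construction of $U$, so the condition collapses to: for every $\gp|\ell$,
\[
\Hom_{\GL_2(\CO_{K_\gp})}\bigl(V_\gp^\vee\otimes\chi_\gp^{-1},\,\pi_\gp\bigr)\ne 0,
\]
where $\chi_\gp$ is the restriction to $\GL_2(\CO_{K_\gp})$ of the character through which $G$ acts on $\mu_\ell$. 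The local clause of Conjecture~\ref{conj:indefinite} then rewrites this as $V_\gp^\vee\otimes\chi_\gp^{-1}\in W_\gp(\rho^\vee|_{G_{K_\gp}})$. Running the chain of implications in reverse gives the converse direction, so the two conditions (``$\rho$ modular of weight $V$'' and the membership in $W_\gp$) are equivalent.

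The main obstacle is purely bookkeeping, but genuine: one must verify that the composite twist coming from the shift $\Pic^0$ vs.\ $H^1$ (a factor of $\mu_\ell$), the appearance of $\rho^\vee$ rather than $\rho$ on the right-hand side of Conjecture~\ref{conj:indefinite}, and the duality $V\mapsto V^\vee$ on Serre weights, all cancel precisely to identify $V_\gp^\vee\otimes\chi_\gp^{-1}\in W_\gp(\rho^\vee|_{G_{K_\gp}})$ with $V_\gp\in W_\gp(\rho|_{G_{K_\gp}})$. The tool for this verification is Proposition~\ref{prop:chars}(1), which describes how $W_\gp$ transforms under twisting, combined with Corollary~\ref{gl1}(1) to match determinants and central characters, and the elementary observation that the recipes~(\ref{eqn:irred}) and~(\ref{eqn:red}) defining $W_\gp$ are compatible with dualising $\rho$ (sending $\xi\mapsto\xi^{-1}$, resp.\ $(\chi_1,\chi_2)\mapsto(\chi_1^{-1},\chi_2^{-1})$, and replacing each $(\vec a,\vec b)$ by the tuple corresponding to $V_{\vec a,\vec b}^\vee$). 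Once this compatibility is confirmed, $W(\rho)$ equals the set of Serre weights of $\rho$ and Conjecture~\ref{conj:weight} follows.
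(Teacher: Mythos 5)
Your overall strategy is the same as the paper's (translate both sides of Conjecture~\ref{conj:weight} through the factorisation of $S^D[\gm]$ given by Conjecture~\ref{conj:indefinite} and match the local piece at $\gp|\ell$ with $W_\gp$), but the bookkeeping has a genuine conceptual slip, and a couple of the auxiliary steps are either unnecessary or unjustified.

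The main issue is the handling of the $\mu_\ell$ twist. Having correctly arrived at the condition $\Hom_{G_K\times G}(\rho\otimes\mu_\ell^{-1}\otimes V^\vee, S^D(U'))\neq 0$, you then ``localise at $\gm_\rho$'' and write $S^D[\gm_\rho]\cong\rho\otimes\bigotimes'\pi_\gp$. But the object on the left of your $\Hom$ involves the Galois representation $\rho(-1)$, not $\rho$, so the relevant maximal ideal is $\gm_{\rho(-1)}$; with Conjecture~\ref{conj:indefinite}, $S^D[\gm_{\rho(-1)}]\cong\rho(-1)\otimes\bigotimes'\pi_\gp$ with the $\pi_\gp$ determined by $\rho(-1)$. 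One \emph{cannot} carry $\mu_\ell^{-1}$ across to the $\GL_2(\CO_{K_\gp})$-factor as a local twist $\chi_\gp^{-1}$: $\mu_\ell^{-1}$ is a $G_K$-module, not a $\GL_2(\CO\otimes\Z_\ell)$-module, and if you genuinely take $\Hom(\rho\otimes\mu_\ell^{-1}\otimes V^\vee,\ \rho\otimes\bigotimes'\pi_\gp)$ then the $G_K$-factor is $\Hom_{G_K}(\rho(-1),\rho)$, which vanishes for generic $\rho$. The correct local condition obtained from Conjecture~\ref{conj:indefinite} is simply $V_\gp^\vee\in W_\gp(\rho(-1)^\vee)$, with no extra character; the $\chi_\gp^{-1}$ you introduce is exactly what you pick up afterwards when transcribing $W_\gp(\rho(-1)^\vee)$ into $W_\gp(\rho^\vee)$ via Proposition~\ref{prop:chars}(1). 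In other words your final local condition $V_\gp^\vee\otimes\chi_\gp^{-1}\in W_\gp(\rho^\vee)$ is right, but the route you describe for arriving at it is not valid as written.

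Relatedly, the ``elementary observation'' that the recipes are compatible with $\rho\mapsto\rho^\vee$ and $V\mapsto V^\vee$ is not quite true: one has $V_{\vec a,\vec b}^\vee = V_{-\vec a-\vec b+\vec 1,\vec b}$, and chasing through Proposition~\ref{prop:chars}(1)--(2) shows $V\in W_\gp(\rho)$ iff $V^\vee\otimes(N\circ\det)^{-1}\in W_\gp(\rho^\vee)$; the naive duality $V^\vee\in W_\gp(\rho^\vee)$ is off by exactly this determinant twist. This extra twist is precisely what makes your final formula correct, but it means the ``observation'' as you phrase it (dualise $\xi$, $(\chi_1,\chi_2)$, and $(\vec a,\vec b)$ naively) is false. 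The cleanest way to organise all three twists at once is as in the paper: from $\sigma^\vee\in W(\rho(-1)^\vee)$ one uses $\rho(-1)^\vee=(\det\rho)^{-1}\omega\cdot\rho$ and $\sigma^\vee=\psi^{-1}\sigma$ ($\psi$ the central character of $\sigma$), and Proposition~\ref{prop:chars} together with the determinant match of Proposition~\ref{prop:chars}(2) then gives $\sigma\in W(\rho)$ directly.

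Two smaller points. First, the level-raising step at the start of your argument (invoking Lemma~\ref{taylor} to replace $D_0$ by a fixed $D$ for the forward direction) is both unnecessary and not what Lemma~\ref{taylor} gives you: that lemma concerns totally definite quaternion algebras, not the indefinite ones of Conjecture~\ref{conj:indefinite}. In the forward direction one simply applies the conjecture to the $D$ that witnesses modularity of weight $V$, after noting that any such $D$ is already split at all $\gp|\ell$ by Definition~\ref{def:modular}. Second, the converse direction (``running the chain in reverse'') is not automatic: to pass from $S^D[\gm]^U$ back to $S^D(U)[\gm]$ one needs the isomorphism in Lemma~\ref{lemma:elliptic}, and that requires choosing $D$ and $U$ so that $Y_U$ has no elliptic points of order divisible by $\ell$ (or imposing that $\rho$ not be badly dihedral). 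In the paper this is handled by an explicit construction of $D$ (introducing an auxiliary ramified prime $\gq$ when $[K:\Q]$ is even) and shrinking $U$; your proposal does not address this.
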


\begin{proof} Since the conjecture is known for $\Q$, we can assume $K \neq \Q$. 
Now $\rho$ is modular of weight $\sigma$ if and only if $\rho(-1)$ is isomorphic to
an $\Flbar[G_K]$-subquotient of $\Hom_{\Flbar[U]}(\sigma^\vee,H^1(Y_{U',\overline{K}},\Flbar))$ for
some $D$ and $U$ as in Definition~\ref{def:modular}. This is equivalent to saying that
$\gm_{\rho(-1)}^\Sigma$ (for any $\Sigma$ at level $U'$) is (maximal and) in the support of
$\Hom_{\Flbar[U]}(\sigma^\vee,S^D(U'))$, or equivalently that
$$\Hom_{\Flbar[U]}(\sigma^\vee,S^D(U')[\gm_{\rho(-1)}])\neq 0.$$
(Note in particular that by the proof of Lemma~\ref{lemma:esblr}, ``$\Flbar G_K$-subquotient'' can be replaced with
``$\Flbar G_K$-submodule'' as claimed after Definition~\ref{def:modular}.) 

So if $\rho$ is modular of an irreducible weight $\sigma$, then 
$\Hom_{\Flbar[U]}(\sigma^\vee,S^D[\gm_{\rho(-1)}])\neq 0$
by Lemma~\ref{lemma:elliptic}. If Conjecture~\ref{conj:indefinite} holds, then
we may write $S^D[\gm_{\rho(-1)}] = \rho \otimes (\otimes' \pi_\gp)$; moreover
for each $\gp|\ell$, we have $\Hom_{\Flbar[U_\gp]}(\sigma_\gp^\vee,\pi_\gp)\neq 0$
so that $\sigma_\gp^\vee \in W_\gp(\rho(-1)^\vee)$, or equivalently that
$\sigma^\vee \in W(\rho(-1)^\vee)$.
Since $\rho \cong \det(\rho)\otimes \rho^\vee$ and $\sigma \cong \psi\sigma^\vee$
where $\psi$ is the central character of $\sigma$, it follows easily from
Proposition~\ref{prop:chars} that this is equivalent to 
$\sigma\in W(\rho)$.

Conversely suppose that $\sigma\in W(\rho)$. If $[K:\Q]$ is odd, then let $D$
be a quaternion algebra over $K$ ramified at all but one infinite places
and split at all finite places. If $[K:\Q]$ is even, then let $L$ denote the
splitting field of $\rho$ and choose a prime $\gq$ unramified in $L(\mu_\ell)$
so that the conjugacy class of $\frob_\gq$ in $\Gal(L(\mu_\ell)/K)$ is
that of a complex conjugation. Let $D$ be a quaternion algebra over $K$
ramified at exactly $\gq$ and all but one infinite place.
Then $\pi^{D_\gp}(\rho)\neq 0$ for all primes $\gp$, so we can choose $U$ sufficiently
small (of level prime to $\ell$) so that $Y_U$ has no elliptic points and
$\pi_\gp^{U_\gp}\neq 0$ for all $\gp$ not dividing $\ell$.  We can then
reverse the above argument to conclude that $\rho$ is modular of weight $\sigma$.
\end{proof}

Level-lowering theorems of Fujiwara~\cite{fujiwara}, Rajaei~\cite{rajaei}
and the third author~\cite{frazer:mazur, frazer:lev} can be viewed as
partial results in the direction of Conjectures~\ref{conj:definite}
and~\ref{conj:indefinite}, of which they are also consequences.

\begin{proposition} \label{prop:conductor}  
Let $\rho:G_K \to \GL_2(\Flbar)$ be continuous, irreducible and
totally odd, let $\gn$ be the (prime to $\ell$) conductor of $\rho$ and let
$\gn' = \gn\prod_{\gp|\ell}\gp^2$.\\
a) Suppose that Conjecture~\ref{conj:definite} holds, or that Conjecture~\ref{conj:indefinite} 
holds and $\rho$ is not badly dihedral. Then
$\rho\sim \ol{\rho}_\pi$ for some cuspidal automorphic 
representation $\pi$ of $\GL_2/K$ of weight $(\vec{2},0)$ and
conductor dividing $\gn'$.\\
b) If $[K:\Q]$ is even, suppose that Conjecture~\ref{conj:definite} holds for $K$;
if $[K:\Q]$ is odd, suppose that Conjecture~\ref{conj:indefinite} 
holds for $K$ and that $\rho$ is not badly dihedral if $\ell=2$ or $3$.
If $K$ is unramified at $\ell$ and $\rho|_{G_{K_\gp}}$ arises from
a finite flat group scheme over $\CO_{K,\gp}$ for each $\gp|\ell$,
then $\rho\sim \ol{\rho}_\pi$ for some cuspidal automorphic 
representation $\pi$ of $\GL_2/K$ of weight $(\vec{2},2)$ and
conductor $\gn$.
\end{proposition}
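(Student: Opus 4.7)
The plan is to derive both parts from the assumed mod $\ell$ local-global compatibility conjecture by producing a non-zero element of $S^D(U)[\gm_\rho]$ for a suitably small compact open $U = \prod_\gp U_\gp$ whose Jacquet-Langlands transfer to $\GL_2(\A_K^f)$ has conductor dividing the required ideal. The conjectural decomposition $S^D[\gm_\rho] \cong {\otimes'_\gp}\pi_\gp$ (possibly tensored with $\rho$) reduces the problem to a local analysis of the factors $\pi_\gp$. The quaternion algebra $D$ is chosen as in the proof of Proposition~\ref{indefinite:weight} to match the conjecture in force: totally definite and split at all finite places when $[K:\Q]$ is even under Conjecture~\ref{conj:definite}, or split at exactly one real place (and at all finite places, outside an auxiliary prime chosen by Cebotarev when $[K:\Q]$ is even) under Conjecture~\ref{conj:indefinite}. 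In the latter case the non-badly-dihedral hypothesis and Lemma~\ref{lemma:elliptic} let us pass from $(S^D[\gm_\rho])^U$ to $S^D(U)[\gm_\rho]$ after shrinking $U$ to kill elliptic points of order divisible by $\ell$.

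For part (a), at a prime $\gp \nmid \ell$, the conjecture gives $\pi_\gp \cong \pi^{D_\gp}(\rho|_{G_{K_\gp}})$; lifting $\rho|_{G_{K_\gp}}$ to a continuous $\tilde\rho_\gp$ of equal Artin conductor, property (2) of Theorem~\ref{thm:emerton} in the split case, and the explicit construction preceding Proposition~\ref{prop:vigneras} in the non-split case, produce a fixed vector in $\pi_\gp$ whose associated $\GL_2$ level matches the Artin conductor $v_\gp(\gn)$. At $\gp \mid \ell$, the weight part of the conjecture combined with Proposition~\ref{prop:inertia} yields a non-zero embedding $\sigma \hookrightarrow \pi_\gp$ for some irreducible $\sigma \in W_\gp(\rho^\vee|_{G_{K_\gp}})$, and since every irreducible $\Flbar$-representation of $\GL_2(k_\gp)$ admits a non-zero vector fixed by the upper-triangular unipotent subgroup, $\pi_\gp$ has a $U_1(\gp)$-fixed vector, contributing conductor at most $\gp^2$ at this prime. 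Combining these gives a non-zero vector in $S^D(U)[\gm_\rho]$ at the required level, and Jacquet-Langlands yields the desired $\pi$ of weight $(\vec{2},0)$ and conductor dividing $\gn'$.

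For part (b), the argument is the same away from $\ell$, but at $\gp \mid \ell$ the finite-flat hypothesis on $\rho|_{G_{K_\gp}}$ is used to show that the trivial Serre weight $V_{\vec{0},\vec{1}}$ lies in $W_\gp(\rho^\vee|_{G_{K_\gp}})$. In the reducible case this comes directly from the recipe, using the identification of peu ramifi\'ee classes with those arising from finite flat group schemes (Remark~\ref{rmk:peuramifie}); in the irreducible case it follows by matching the inertial type of $\rho|_{I_{K_\gp}}$ with that prescribed by the trivial weight via Fontaine-Laffaille theory. Consequently $\pi_\gp$ has a $\GL_2(\CO_{K,\gp})$-fixed vector, so $U_\gp$ can be taken to be $\GL_2(\CO_{K,\gp})$, giving conductor exactly $\gn$; the conversion from weight $(\vec{2},0)$ to $(\vec{2},2)$ is the standard twist between Jacobians and trivial-coefficient cohomology encoded in Lemma~\ref{kevinscontribution}. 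The chief technical obstacle is the explicit case-by-case verification that the trivial weight lies in $W_\gp$ for every finite-flat $\rho|_{G_{K_\gp}}$, including the exceptional situations in the definition of $L_\alpha$ when $\chi_1\chi_2^{-1}$ is cyclotomic or trivial and the $\ell = 2$ subtleties.
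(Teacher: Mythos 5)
Your overall strategy is the same as the paper's: choose $D$ to match the conjecture in force, pick a small compact open $U$ so that $S^D(U)[\gm_\rho]\neq 0$ (invoking Lemma~\ref{lemma:elliptic} in the indefinite case), lift to characteristic zero, and transfer via Jacquet--Langlands. Your treatment away from $\ell$ and your argument for part (b) — that the finite-flat hypothesis forces the trivial Serre weight into $W_\gp$, so that $U_\gp$ can be taken to be all of $\GL_2(\CO_{K,\gp})$ — match the paper's.

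Where you deviate, and where a genuine gap appears, is at $\gp\mid\ell$ in part (a). The paper's argument there is simpler and does not invoke the weight recipe at all: it takes $U_\gp$ to be a pro-$\ell$-Sylow subgroup of a maximal compact in $D_\gp^\times$; since $\pi_\gp$ is a nonzero smooth $\Flbar$-representation and $U_\gp$ is pro-$\ell$, one has $\pi_\gp^{U_\gp}\neq 0$ for free. Your route through the weight conjecture is not available in general: Conjectures~\ref{conj:definite} and~\ref{conj:indefinite} only characterise $\Hom_{\GL_2(\CO_{K,\gp})}(\sigma,\pi_\gp)$ when $D$ (and $K$) are unramified at $\gp$, but for parity reasons the $D$ chosen in the proof may need to be ramified at one prime $\gp_0\mid\ell$ (this is exactly the paper's choice in the totally definite case with $[K:\Q]$ odd), and at that prime the weight recipe says nothing. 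There is also an imprecision: a vector in $\pi_\gp$ coming from a unipotent-fixed vector of $\sigma$ is fixed by the pro-$\ell$-Sylow $P_\gp$ (the preimage of the unipotent upper-triangular subgroup of $\GL_2(k_\gp)$), which is a proper subgroup of $U_1(\gp)$, not all of it. The conductor bound $\gp^2$ on the characteristic-zero lift still comes out — because $P_\gp$ contains the principal congruence subgroup $1+\gp M_2(\CO_{K_\gp})$ — but not for the reason you give. Replacing the detour through weights by the pro-$\ell$-Sylow argument both closes the gap at $\gp_0$ and makes the proof shorter.
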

\begin{proof} To prove (a) assuming Conjecture~\ref{conj:definite}, let $D$ be a quaternion algebra over $K$ ramified
at all infinite places and at most one prime $\gp_0$ over $\ell$, but no other finite
places. Let $U=\prod_\gq U_\gq$ be the open compact subgroup of $D_f^\times$
with
$$U_\gq=\left\{\left.\,\smallmat{a}{b}{c}{d}\in \GL_2(\CO_{K,\gq})\,\right|\,
c\equiv d-1 \equiv 0 \bmod \gn\CO_{K,\gq}\,\right\}$$
for $\gq$ not dividing $\ell$, and $U_\gq$ a pro-$\ell$-Sylow subgroup of
a maximal compact subgroup of $D_\gq^\times$ for $\gq|\ell$.
It follows from Emerton's characterisation of $\pi_\gq$ in
Theorem~\ref{thm:emerton} that $\pi_\gq^{U_\gq} \neq 0$ for all $\gq$ not dividing
$\ell$. The same is true for $\gq|\ell$ since $U_\gq$ is pro-$\ell$ and 
$\pi_\gq\neq 0$. Therefore $S^D(U)[\gm_\rho] = S^D[\gm_\rho]^U \neq 0$
and $\rho \sim \ol{\rho}_{\pi'}$ for some cuspidal automorphic representation
$\pi'$ of $D^\times$ of weight $(\vec{2},0)$ with ${\pi'}^U\neq 0$.
Then the cuspidal automorphic representation $\pi$
of $\GL_2/K$ corresponding to $\pi$ via Jacquet-Langlands has conductor
dividing $\gn'$. (Note that the form of $U_\gq$ for $\gq|\ell$ implies
that $\pi_\gq$ has conductor dividing $\gq^2$.)

The proof of (a) assuming Conjecture~\ref{conj:indefinite} is similar, except
that we use the assumption that $\rho$ is not badly dihedral in order to apply
Lemma~\ref{lemma:elliptic}.

The proof of (b) is also similar, using the fact that if
$\rho|_{G_{K_\gp}}$ arises from a finite flat group scheme, then $W_\gp(\rho)$
contains the trivial representation.  (Recall that we are assuming $\ell$ to be unramified
in $K$, so that badly dihedral representations only occur if $\ell=2$ or $3$, as remarked above.)
\end{proof}

We remark that in fact the conclusions of Proposition~\ref{prop:conductor}
follow from weaker modularity conjectures than Conjectures~\ref{conj:definite}
and~\ref{conj:indefinite}, together with known level lowering results,
at least if $\ell > 2$ and $\rho$ is not badly dihedral. Indeed,
we will explain that weak modularity (Conjecture~\ref{conj:weak})
implies Proposition~\ref{prop:conductor}(a), given these level lowering results. 
It seems that Proposition~\ref{prop:conductor}(b) is
more subtle in that it requires more control over the level at $\ell$, but
these follow from Conjecture~\ref{conj:weak} and
 our weight conjecture (Conjecture~\ref{conj:weight}), together with
level lowering. (In fact, the only part of Conjecture~\ref{conj:weight} we
need is the case where $\rho$ is finite at $\mathfrak{p}$, so that
$W_{\mathfrak{p}}(\rho)$ contains the trivial representation.)

To deduce (a) from the weak conjecture,
we can assume that $\rho\sim\overline{\rho}_\pi$ for some automorphic
representation $\pi$ of $\mathrm{GL}_2/K$ of some weight and level, by
weak modularity. By Corollary~\ref{cor:equiv}, we can assume that the
weight is $(\vec{2},0)$, and that the level is 
$\mathfrak{m}\prod_{\mathfrak{p}|\ell}\mathfrak{p}^{a_{\mathfrak{p}}}$ for some
ideal $\gm$ and some integers $a_{\mathfrak{p}}$. The same argument as in the proof of
Proposition~\ref{prop:conductor}(a) above 
gives that $a_{\mathfrak{p}}\le2$. For primes $\mathfrak{q}\nmid\ell$, we
may use existing level lowering results to deduce that we may take
$\mathfrak{m}=\mathfrak{n}$, so that $\rho$ is modular of weight $(\vec{2},0)$
and level $\mathfrak{n}'$, as required. For $[K:\Q]$ odd, these are due to the
third author and to Rajaei~\cite{frazer:mazur, frazer:lev, rajaei}, under the
technical hypothesis that $\ell > 2$ and $\rho$ is not badly dihedral. When $[K:\Q]$ is
even, a similar argument applies, but in order to use the level lowering results
mentioned above, one needs to begin by raising the level by adding a
prime, using Taylor's theorem~\cite{rlt:inv}. One can then switch to an
appropriate quaternion algebra to perform the level lowering, and finally
remove the prime that we added, using Fujiwara's unpublished 
version~\cite{fujiwara} of Mazur's Principle in the case $[K:\Q]$ even.

Proposition~\ref{prop:conductor}(b) would work in the same way, given
sufficiently strong level lowering statements for primes~$\mathfrak{p}|\ell$.
However, these are not yet sufficient to deduce~(b) from weak
modularity and level lowering. But Conjecture~\ref{conj:weak} and
Conjecture~\ref{conj:weight}, together with level lowering (and the results of
Taylor and Fujiwara when $[K:\Q]$ is even), is sufficient to deduce
Proposition~\ref{prop:conductor}(b); one simply uses the observation made in the
course of the proof above that the trivial representation lies in 
$W_{\mathfrak{p}}(\rho)$ if $\rho$ is finite at $\mathfrak{p}$.

\begin{corollary}\label{cor:finite} If Conjecture~\ref{conj:definite} or~\ref{conj:indefinite} 
holds, then there are only finitely many continuous, irreducible, totally odd
$\rho:G_K \to \GL_2(\ol{\F}_\ell)$ of conductor dividing $\gn$.
\end{corollary}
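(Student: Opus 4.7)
My plan is to use Proposition~\ref{prop:conductor}(a) to reduce to the well-known finiteness of cuspidal automorphic eigenforms of bounded weight and level, handling the badly dihedral case (under Conjecture~\ref{conj:indefinite} only) separately by elementary class field theory. The main point to verify is the tame ramification of mod~$\ell$ characters at primes above~$\ell$; no serious obstacle arises.

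Under either conjecture, Proposition~\ref{prop:conductor}(a) shows that every continuous, irreducible, totally odd $\rho:G_K\to\GL_2(\Flbar)$ of conductor dividing $\gn$ that is not badly dihedral (a vacuous exclusion if Conjecture~\ref{conj:definite} is the one assumed) satisfies $\rho\sim\ol{\rho}_\pi$ for some cuspidal automorphic representation $\pi$ of $\GL_2/K$ of weight $(\vec{2},0)$ and conductor dividing $\gn':=\gn\prod_{\gp\mid\ell}\gp^2$. Since $S_{(\vec{2},0)}(U_1(\gn'))$ is finite-dimensional, only finitely many such $\pi$ exist up to isomorphism, and hence so do the representations $\ol{\rho}_\pi$.

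It remains to bound the number of badly dihedral $\rho$ of conductor dividing $\gn$ when only Conjecture~\ref{conj:indefinite} is assumed. Any such $\rho$ has the form $\Ind_{G_{K'}}^{G_K}\chi$ with $K'=K(\delta)$ a totally imaginary quadratic extension of $K$ satisfying $\delta^\ell\in K$, and by the discussion preceding Lemma~\ref{lemma:elliptic} only finitely many such fields $K'$ occur. Fix one such $K'$. The conductor--discriminant formula
$$\mathrm{cond}(\Ind_{G_{K'}}^{G_K}\chi)=\mathrm{disc}(K'/K)\cdot N_{K'/K}(\mathrm{cond}(\chi))$$
bounds the prime-to-$\ell$ part of the conductor of $\chi$ in terms of $\gn$. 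At a prime $\gq$ of $K'$ above~$\ell$, the restriction $\chi|_{I_{K'_\gq}}$ is automatically tame: the wild inertia subgroup is pro-$\ell$ while $\Flbar^\times$ has no $\ell$-torsion (since $\Flbar$ has characteristic $\ell$), so any continuous homomorphism from a pro-$\ell$ group to $\Flbar^\times$ is trivial. Hence the $\gq$-part of the conductor of $\chi$ is at most $\gq$. Combining these observations, $\mathrm{cond}(\chi)$ divides a fixed ideal of $\CO_{K'}$ depending only on $\gn$ and~$K'$, so $\chi$ factors through a fixed finite ray class group of $K'$. There are only finitely many such characters, hence finitely many badly dihedral $\rho$, completing the proof.
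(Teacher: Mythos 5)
Your proof is correct and follows essentially the same route as the paper: Proposition~\ref{prop:conductor}(a) reduces the non-badly-dihedral case to the finiteness of cuspidal automorphic representations of weight $(\vec{2},0)$ and bounded level, and the badly dihedral case is disposed of by class field theory. The paper simply asserts the class field theory step in one sentence (``by class field theory, there are only finitely many badly dihedral $\rho$ of a given conductor''), whereas you supply the details (finitely many candidate fields $K'$, the conductor-discriminant formula, and tameness of $\chi$ at primes above $\ell$ because $\Flbar^\times$ has no $\ell$-torsion); your additional observation that the badly dihedral case only needs separate treatment under Conjecture~\ref{conj:indefinite} is also accurate.
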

\begin{proof} Note that by class field theory, there are only finitely
many badly dihedral $\rho$ of a given conductor.  We can therefore
assume $\rho$ is not badly dihedral, so by Proposition~\ref{prop:conductor}(a),
either conjecture implies $\rho$ is modular of weight $(\vec{2},0)$ and level 
$\ell^2.\mathfrak{n(\rho)}$, where $\mathfrak{n}(\rho)$ denotes the Artin
conductor of $\rho$. Since there are only finitely many automorphic
representations of weight $(\vec{2},0)$ and given bounded level, the result
follows.
\end{proof}

\begin{corollary}\label{cor:stw} Suppose that Conjecture~\ref{conj:definite} holds if
$[K:\Q]$ is even, and Conjecture~\ref{conj:indefinite} holds if $[K:\Q]$ is odd.
If $E$ is an elliptic curve over $K$, then $E$ is modular.
\end{corollary}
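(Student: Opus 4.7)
The plan is to adapt the classical Wiles--Taylor strategy: for $E/K$, produce an appropriate mod~$\ell$ residual representation, invoke the assumed mod~$\ell$ conjecture via Proposition~\ref{prop:conductor} to establish its modularity, and then apply a residual modularity lifting theorem to conclude that $\rho_{E,\ell}$, and hence $E$, is modular.

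Concretely, I would first consider $\ol{\rho}_{E,3}:G_K\to\GL_2(\F_3)$. Its determinant is the mod~3 cyclotomic character, which takes the value $-1$ at every complex conjugation, so $\ol{\rho}_{E,3}$ is totally odd. Supposing it is absolutely irreducible, Proposition~\ref{prop:conductor}(a) --- which applies under Conjecture~\ref{conj:definite} when $[K:\Q]$ is even and under Conjecture~\ref{conj:indefinite} when $[K:\Q]$ is odd --- produces a cuspidal automorphic representation $\pi$ of $\GL_2/K$ of weight $(\vec{2},0)$ with $\ol{\rho}_{E,3}\cong\ol{\rho}_\pi$. A modularity lifting theorem for two-dimensional $\ell$-adic representations over totally real fields (of the kind proved by Fujiwara, Skinner--Wiles, Kisin, Gee and their collaborators), applied with $\ell=3$, then lifts this residual modularity to modularity of $\rho_{E,3}$, hence of $E$.

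If $\ol{\rho}_{E,3}$ is reducible (or otherwise fails the Taylor--Wiles hypotheses required to run the lifting step), then I would employ Wiles' 3--5 switch: the twist $X_E$ of the modular surface $X(5)$ parametrizing pairs $(E',\psi)$ with $\psi: E'[5] \iso E[5]$ symplectically is geometrically rational, so after possibly enlarging the base to an auxiliary solvable totally real extension it has many $K$-rational points. One picks $E'/K$ for which $\ol{\rho}_{E',3}$ is absolutely irreducible, applies the mod~3 argument above to $E'$ to conclude it is modular, deduces that $\ol{\rho}_{E,5}\cong\ol{\rho}_{E',5}$ is modular, and finally uses a mod~5 modularity lifting theorem applied to $E$ itself. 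Exceptional residual images --- simultaneous reducibility at 3 and 5, or the badly dihedral representations excluded in Proposition~\ref{prop:conductor}, which can occur only for $\ell\in\{2,3\}$ --- are handled by passing to a solvable totally real extension $K'/K$ on which the bad behaviour disappears, proving modularity of $E_{K'}$, and descending to $K$ via cyclic base change of Langlands--Arthur--Clozel.

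The main obstacle is not the mod~$\ell$ modularity step itself, which is exactly what the assumed conjectures supply via Proposition~\ref{prop:conductor}, but rather the invocation of sufficiently general residual modularity lifting theorems over totally real fields. In particular one must accommodate elliptic curves with bad reduction at primes above~$\ell$ (where Proposition~\ref{prop:conductor}(b) does not apply and one must instead use part~(a) with the allowed extra conductor $\prod_{\gp|\ell}\gp^2$), verify the Taylor--Wiles large-image hypothesis for at least one of $\ell=3,5$, and arrange the 3--5 switch over a general totally real field. Each of these technicalities is the subject of existing or ongoing work on modularity lifting, and combining them with Proposition~\ref{prop:conductor} yields the corollary.
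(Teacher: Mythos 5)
Your proposal is a correct strategy in outline, but it takes a genuinely different — and considerably heavier — route than the paper's proof, and it rests on deep external inputs (modularity lifting theorems, the 3--5 switch, solvable base change) that the paper's argument avoids entirely. The paper's proof applies Proposition~\ref{prop:conductor}(b) at \emph{infinitely many} primes $\ell>3$ unramified in $K$ at which $E$ has good reduction (so $\ol{\rho}_{E,\ell}|_{G_{K_\gp}}$ is finite flat for all $\gp|\ell$ and badly dihedral images do not occur). For each such $\ell$ this yields a cuspidal automorphic representation $\pi^{(\ell)}$ of weight $(\vec{2},2)$ and level $U_1(\gn)$ (with $\gn$ the conductor of $E$) whose associated mod~$\ell$ Galois representation is $\ol{\rho}_{E,\ell}$. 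Since there are only finitely many automorphic representations of this fixed weight and bounded level, a pigeonhole argument gives a single $\pi$ with $\pi=\pi^{(\ell)}$ for infinitely many $\ell$; the Hecke eigenvalues of $\pi$ are then congruent to $a_\gp(E)$ modulo infinitely many primes, hence equal, and one concludes $L(E,s)=L(\pi,s)$ without any lifting theorem.

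The comparison is instructive. Your Wiles-style argument establishes residual modularity at a single small prime and then lifts; this forces you to check the Taylor--Wiles hypotheses (image not too small, suitable local conditions at $\ell$), to carry out the 3--5 switch over a general totally real $K$, and to invoke cyclic base change — all genuine technical burdens, and precisely the ones you flag as the main obstacle. The paper instead exploits the full strength of the assumed conjectures, which supply residual modularity at \emph{every} good prime with uniformly bounded weight and level; finiteness of automorphic forms of bounded level then replaces the entire lifting apparatus. The remark following Proposition~\ref{prop:conductor} explicitly records your route as a legitimate alternative (modularity of $E$ follows from Conjecture~\ref{conj:weak} together with lifting results of Skinner--Wiles, Fujiwara or Taylor), but the proof the paper actually gives is self-contained given the conjectures and is both shorter and free of the delicate hypotheses your sketch must navigate.
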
 

\begin{proof}
Given $E$ of conductor $\mathfrak{n}$, let $\ell$ run through all primes greater
than~$3$ and unramified in $K$, such that $E$ has good reduction at all
$\mathfrak{p}|\ell$. Then $\rho_{E,\ell}$ is finite at $\mathfrak{p}$,
so Proposition~\ref{prop:conductor}(b) implies that $\rho_{E,\ell}$ is
modular of weight $(\vec{2},2)$ and level equal to $\mathfrak{n}(\rho_{E,\ell})$, which
divides the conductor of $E$. So there is an automorphic representation
$\pi^{(\ell)}$ of level $U_1(\mathfrak{n})$ and weight $(\vec{2},2)$ whose
mod~$\ell$ representation agrees with $\rho_{E,\ell}$, or equivalently one of
weight $(\vec{2},0)$ giving rise to $\rho_{E,\ell}(-1)$.
There are only finitely many such automorphic representations, so there is a $\pi$
such that $\pi=\pi^{(\ell)}$ for infinitely many $\ell$.  It follows
that for all $\gp$ not dividing $\gn$, the action of $T_\gp$ on
$\pi^{U_1(\gn)}$ is by $a_\gp(E)$ and that of $S_\gp$ is trivial.
Therefore $\rho_{E,\ell}(-1) \sim \rho_\pi$ (for any $\ell$), and
hence $L(E,s) = L(\pi,s)$.
\end{proof}

\begin{remark}  The remarks after Proposition~\ref{prop:conductor} show that
the conclusion of Corollary~\ref{cor:finite} for $\ell > 2$ actually follows
from Conjecture~\ref{conj:weak} and known level lowering results.
Similarly one sees that the conclusion of Corollary~\ref{cor:stw} follows
from Conjecture~\ref{conj:weak}, Conjecture~\ref{conj:weight} and level lowering results.  In fact the modularity
of $E$ even follows from Conjecture~\ref{conj:weak} using modularity lifting results
of Skinner-Wiles~\cite{skinner-wiles:irred}, Fujiwara~\cite{fujiwara:old} or Taylor~\cite{rlt:pm}.
Furthermore, using the lifting results of Kisin~\cite{kisin:annals} and Gee~\cite{gee:mrl}
one can show (unconditionally) that if $\rho_{E,3}$ is irreducible and not badly dihedral,
then $E$ is modular.  See also \cite{skinner-wiles:red}, \cite{skinner-wiles:irred} and
\cite{frazer-jayanta} for additional cases where modularity of $E$
is known.
\end{remark}

\end{document}